\providecommand{\U}[1]{\protect\rule{.1in}{.1in}}
\numberwithin{equation}{section}
\theoremstyle{plain}
\newtheorem{thm}{Theorem}[section]
\newtheorem{prop}[thm]{Proposition}
\newtheorem{defi}[thm]{Definition}
\newtheorem{lem}[thm]{Lemma}
\newtheorem{cor}[thm]{Corollary}
\theoremstyle{remark}
\newtheorem{rema}[thm]{Remark}
\begin{document}
\title[Metaplectic Hecke algebra representations]{Metaplectic representations of Hecke algebras, Weyl group actions, and associated polynomials}
\author{Siddhartha Sahi, Jasper V. Stokman, Vidya Venkateswaran}

\address{S. Sahi, Department of Mathematics, Rutgers University, 110 Frelinghhuysen Rd,
Piscataway, NJ 08854-8019, USA.}
\email{sahi@math.rutgers.edu}
\address{J. Stokman, KdV Institute for Mathematics, University of Amsterdam, Science
Park 105-107, 1098 XG Amsterdam, The Netherlands.}
\email{j.v.stokman@uva.nl}
\address{V. Venkateswaran, Center for Communications Research, 805 Bunn Dr, Princeton,
NJ 08540, USA. }
\email{vvenkat@idaccr.org}
\keywords{Macdonald polynomials, Weyl group multiple Dirichlet series, Hecke algebras, Cherednik algebras}

\begin{abstract}
Chinta and Gunnells introduced a rather intricate multi-parameter Weyl  group action on rational functions on a torus, which, when the parameters are
specialized to certain Gauss sums, describes the functional equations of Weyl group multiple Dirichlet series associated to metaplectic (n-fold) covers of algebraic
groups. In subsequent joint work with Puskas, they extended this action to a  ``metaplectic" representation of the equal parameter affine Hecke algebra, which allowed them to obtain explicit formulas for the p-parts of these Dirichlet series. They have also verified by a computer check the remarkable fact that their formulas 
continue to define a group action for general (unspecialized) parameters. 

In the first part of paper we give a conceptual explanation of this fact, by giving  a uniform and elementary construction of the ``metaplectic" representation for generic Hecke algebras as a suitable quotient of a parabolically induced affine Hecke algebra module, from which the associated Chinta-Gunnells Weyl group action follows through localization. 

In the second part of the paper we extend the metaplectic representation to the double affine Hecke algebra, which provides a generalization of Cherednik's basic representation. This allows us to introduce a new family of ``metaplectic" polynomials, which generalize nonsymmetric Macdonald polynomials. In this paper, we provide the details of the construction of metaplectic polynomials in type A; the general case will be handled in the sequel to this paper.
\end{abstract}
\maketitle

\vspace{2em}


\section{Introduction}

\vspace{2em}

This paper contains two main results concerning a somewhat mysterious action
of the Weyl group of a reductive Lie group on the algebra of rational
functions on its torus. This action was first introduced in type $A$ by
Kazhdan and Patterson \cite{KP}, and in full generality by Chinta and Gunnells
\cite{CG07, CG}, who used it to obtain formulas for the local parts
($p$-parts) of Weyl group multiple Dirichlet series. The action involves an
integer $n$ and parameters $g_{1},\ldots,g_{n-1}$, which in the application
are specialized to certain Gauss sums; however it remains a group action even
without this specialization. Chinta and Gunnells verified this fact through a
computer check and they asked for a conceptual proof. Our first main result
provides such a proof in complete generality.  The key role in the proof is
played by a certain representation of the affine Hecke algebra that we
construct in Theorem \ref{mainTHM} below, and which we refer to as the
\emph{metaplectic} representation. 

There is a striking analogy between the Chinta-Gunnells setting and the theory
of Macdonald polynomials \cite{Mac, Ma, Ch}. The latter are a family of orthogonal polynomials
on the torus that depend on two or three \textquotedblleft
root-length\textquotedblright\ parameters, and which generalize many important
polynomials in representation theory and algebraic combinatorics, including
spherical functions for real and $p$-adic groups. We show that there is much
more to this analogy. Our second main result is the construction of a family
of polynomials that we refer to as \emph{metaplectic} polynomials. These
depend on the root-length parameters as well as the $g_{1},\ldots,g_{n-1},$
and are a common generalization of nonsymmetric Macdonald polynomials \cite{Ma0,Ch0} and of the $p$-parts
of Weyl group multiple Dirichlet series.  A key point in our construction is
extending the metaplectic representation from the affine Hecke algebra to the
\emph{double} affine Hecke algebra.

In the present paper we discuss in detail the metaplectic polynomials only in
type A, where many of the essential ideas already appear. This is the setting
of \cite{KP} and of Macdonald's book on symmetric functions \cite{MacSym},
which is of considerable independent interest in algebraic combinatorics. The
consideration of the general metaplectic polynomials requires some additional
ideas, which we intend to address in the sequel to this paper.

\subsection{The Chinta-Gunnells action}

We recall now briefly the Chinta-Gunnells Weyl group action. See, e.g.,
\cite{BBCFH, BBF, CG07, CG, BF, CFG} and the nice overview article
\cite{BuOver} for the connection to Weyl group multiple Dirichlet series. Let
$W$ be the Weyl group of an irreducible root system $\Phi$, with Coxeter
generators $\left\{  s_{i}\right\} _{i=1}^{r} $ corresponding to a choice of
simple roots $\left\{  \alpha_{i}\right\} _{i=1}^{r} $. Let $P$ be the weight
lattice of $\Phi$. The Weyl group canonically acts on the fraction field
$\mathbb{C}(P)$ of the group algebra $\mathbb{C}[P]$ by field automorphisms.
Chinta and Gunnells have constructed a deformation of this action, which
depends on the choice of a $W$-invariant quadratic form $\mathbf{Q}%
:P\rightarrow\mathbb{Q}$ taking integer values on the root lattice $Q$ of
$\Phi$, a natural number $n$, and on parameters $v,g_{0},\ldots,g_{n-1}$
satisfying
\[
g_{0}=-1,\quad g_{j}g_{n-j}=v^{-1},\qquad j=1,\ldots,n-1.
\]

Let $0\leq\mathbf{r}_{m}\left(  j\right)  \leq m-1$ denote the remainder on
dividing $j$ by the natural number $m$, and define $g_{j}$ for arbitrary
$j\in\mathbb{Z}$ by setting $g_{j}=g_{r_{n}\left(  j\right)  }$, let
$\mathbf{B}\left(  \lambda,\mu\right)  =\mathbf{Q}\left(  \lambda+\mu\right)
-\mathbf{Q}\left(  \lambda\right)  -\mathbf{Q}\left(  \mu\right)  $ be the
bilinear form associated to $\mathbf{Q}$, and put $m\left(  \alpha\right)
=n/\gcd\left(  n,\mathbf{Q}\left(  \alpha\right)  \right)  $. It defines a new
root system $\Phi^{m}:=\{m(\alpha)\alpha\}_{\alpha\in\Phi}$, which is either
isomorphic to $\Phi$ or to $\Phi^{\vee}$. The weight lattice $P^{m}\subseteq
P$ of $\Phi^{m}$ is
\[
P^{m}=\{\lambda\in P \,\, | \,\, \mathbf{B}(\lambda,\alpha\bigr)\equiv0\,\,\,
\textup{mod } n \quad\forall\, \alpha\in\Phi\}
\]
(see Lemma \ref{relform2}). Then the Chinta-Gunnells action $\sigma_{i}%
=\sigma(s_{i})$ of the simple reflection $s_{i}\in W$ on $\mathbb{C}(P)$ is
given by the formula%
\begin{equation}
\label{sig_i}%
\begin{split}
& \sigma_{i}\left(  fx^{\lambda}\right)  :=\frac{(s_{i}f)x^{s_{i}\lambda}%
}{1-vx^{m\left(  \alpha_{i}\right)  \alpha_{i}}}\\
&  \!\!\times\left[  x^{-\mathbf{r}_{m\left(  \alpha_{i}\right)  }\left(
-\frac{\mathbf{B}\left(  \lambda,\alpha_{i}\right)  }{\mathbf{Q}\left(
\alpha_{i}\right)  }\right)  \alpha_{i}}\left(  1-v\right)  -vg_{\mathbf{Q}%
\left(  \alpha_{i}\right)  -\mathbf{B}\left(  \lambda,\alpha_{i}\right)
}x^{\left(  1-m\left(  \alpha_{i}\right)  \right)  \alpha_{i}}\left(
1-x^{m\left(  \alpha_{i}\right)  \alpha_{i}}\right)  \right]
\end{split}
\end{equation}
for $f\in\mathbb{C}(P^{m})$ and $\lambda\in P$. \noindent

It is non-trivial to show that the formula (\ref{sig_i}) defines a
representation of $W$. The main issue is to verify that the braid relations
are satisfied. Although this reduces to a rank $2$ computation, the
calculations become rather formidable, and in \cite{CG} the details are only
presented for $A_{2}$. Trying to find a natural interpretation of this
representation was one of the main motivations for our work.

Chinta and Gunnells \cite{CG} employed the action (\ref{sig_i}) to give an
explicit construction of the \textquotedblleft local\textquotedblright\ parts
of certain Weyl group multiple Dirichlet series, and to establish thus the
analytic continuation and functional equations for these series. In this
situation, the $g_{i}$ are $n$-th order Gauss sums for the local field, and
$v=p^{-1}$ with $p$ the cardinality of the residue field. Subsequently,
Chinta-Offen \cite{CO} for type $A$, and McNamara \cite{McN} in general,
showed that these local parts are essentially Whittaker functions for
principal series of certain $n$-fold \textquotedblleft
metaplectic\textquotedblright covers of quasi-split reductive groups. The
resulting explicit expression for the Whittaker function in terms of the
action \eqref{sig_i} is the metaplectic generalization of the
Casselman-Shalika formula. This result is in line with the fact that multiple
Dirichlet series should themselves be Whittaker coefficients attached to
metaplectic Eisenstein series \cite{BBF2, BF}.

Still more recently, Chinta-Gunnells-Puskas \cite{CGP} have shown that the $W
$-action (\ref{sig_i}) gives rise to a Cherednik \cite{Ch} type
Demazure-Lusztig action of the Hecke algebra of $W$. It leads to an expression
of the metaplectic Whittaker functions in terms of metaplectic
Demazure-Lusztig operators. Their work was partly motivated by
Brubaker-Bump-Licata \cite{BBL}, who gave formulas for (nonmetaplectic)
Iwahori-Whittaker functions in terms of Hecke operators and nonsymmetric
Macdonald polynomials. The recent work of Patnaik-Puskas \cite{PP} uses the
Chinta-Gunnells-Puskas Hecke algebra action to study metaplectic
Iwahori-Whittaker functions.

\subsection{Our results}

In Sections \ref{sec:reps} and \ref{sect4}, we give a uniform construction of a Weyl group representation
(Theorem \ref{mWrep}) and an associated Hecke algebra representation (Theorem
\ref{tauPropaction}) that generalize the Chinta-Gunnells \cite{CG} and
Chinta-Gunnells-Puskas \cite{CGP} representations, respectively. Our
construction does not involve case-by-case considerations, and it yields a
representation for the \emph{generic} Hecke algebra $H(\mathbf{k})$, which has
independent Hecke parameters for each root length in $\Phi$. Our method also
allows us to incorporate extra freedom in the definition of $g_{i}$ by
allowing them to depend on the root length (see Definition \ref{RepPar2} of
the \textit{representation parameters}). The Chinta-Gunnells and
Chinta-Gunnells-Puskas representations are recovered in the equal Hecke and
representation parameter case of our constructions.

Our starting point was the observation that (\ref{sig_i}) has many features in
common with formulas obtained by the process of \textquotedblleft
Baxterization\textquotedblright\ \cite{Ch}. The key idea behind this process
is that the group algebra of the \emph{affine }Weyl group and the
\textit{affine} Hecke algebra become \emph{isomorphic} after a suitable
localization, which allows one to relate certain representations of the two
algebras. This inspired our search for a natural representation of the affine
Hecke algebra whose associated localized affine Weyl group representation
produces \eqref{sig_i} for its $W$-action. Its first form can be recovered
from the Chinta-Gunnells-Puskas Hecke algebra action as follows.

Note that the Chinta-Gunnells $W$-action \eqref{sig_i} has an obvious
extension to a representation of the extended affine Weyl group $\widetilde{W}%
^{m}:=W\ltimes P^{m}$ with $\mu\in P^{m}$ acting on $\mathbb{C}(P)$ by
multiplication by $x^{\mu}$. Let $\widetilde{H}^{m}(\mathbf{k})$ be the
associated extended affine Hecke algebra with single Hecke parameter
$\mathbf{k}$ satisfying $\mathbf{k}^{2}=v$. If the affine extension of the
Chinta-Gunnells $W$-action on $\mathbb{C}(P)$ arises from a $\widetilde{H}%
^{m}(\mathbf{k})$-action on $\mathbb{C}[P]$ by localization, then the
generators $\{T_{i}\}_{i=1}^{r}$ of the finite Hecke algebra $H(\mathbf{k})$
act on $\mathbb{C}[P]$ by the Chinta-Gunnells-Puskas metaplectic
Demazure-Lusztig operators associated to $\sigma_{i}$ (cf. Proposition
\ref{tau_pol}). It follows that the underlying $H(\mathbf{k})$-representation
is equivalent to the $H(\mathbf{k})$-representation on $\mathbb{C}[P]$ defined
by
\begin{equation}
\label{pi_rep}\pi(T_{i})x^{\lambda}:=(\mathbf{k}-\mathbf{k}^{-1}%
)\overline{\nabla}_{i}(x^{\lambda}) -\mathbf{k}g_{-\mathbf{B}(\lambda
,\alpha_{i})}x^{s_{i}\lambda},\qquad\lambda\in P,
\end{equation}
with $\overline{\nabla}_{i}$ the following metaplectic version of the
divided-difference operator
\[
\overline{\nabla}_{i}(x^{\lambda}):=\frac{x^{\lambda}-x^{s_{i}\lambda
+\mathbf{r}_{m(\alpha_{i})} ((\lambda,\alpha_{i}^{\vee}))\alpha_{i}}%
}{1-x^{m(\alpha_{i})\alpha_{i}}}.
\]
But now we want to have an \textit{a priori} proof that \eqref{pi_rep} defines
a $H(\mathbf{k})$-action on $\mathbb{C}[P]$ and \textit{conclude} from it that
\eqref{sig_i} defines a $W$-action on $\mathbb{C}(P)$ via the localization technique.

Although the formulas \eqref{pi_rep} are much simpler than \eqref{sig_i}, a
direct case-by-case check that it defines a $H(\mathbf{k})$%
-re\-pre\-sen\-ta\-tion will be close to being as cumbersome as for the
Chinta-Gunnells action. Our first result is to circumvent the
case-by-case check by proving that $\pi$ is isomorphic to a quotient of the
induced module $\widetilde{H}^{m}(\mathbf{k})\otimes
_{H(\mathbf{k})}V_{C}$ for an appropriate $H(\mathbf{k})$-representation
$V_{C}$. This isomorphism in addition allows us to generalize 
$\pi$ and the Chinta-Gunnells Weyl group action to the context of 
generic affine Hecke algebras.

The $H(\mathbf{k})$-representation $V_{C}$ is defined as follows. Let
$V=\bigoplus_{\lambda\in P}\mathbb{C}v_{\lambda}$ be the complex vector space
with basis the weight lattice $P$. It has a natural left $H(\mathbf{k}%
)$-module structure reducing to the canonical $\mathbb{C}[W]$-module structure
when $\mathbf{k}=1$ (see Lemma \ref{refl_rep}). We call $V$ the reflection
representation of $H(\mathbf{k})$. For each $W$-invariant subset $D\subseteq
P$, the subspace $V_{D}:=\bigoplus_{\lambda\in D}\mathbb{C}v_{\lambda}$ is a
$H(\mathbf{k})$-submodule of $V$. In particular, $V_{\{0\}}$ is the trivial
representation of $H(\mathbf{k})$. The appropriate choice of $W$-invariant
subset $C$ of $P$ in the above realization of $\pi$ now turns out to be
\[
C:=\{\lambda\in P \,\, | \,\, (\lambda,\alpha^{\vee})\leq m(\alpha)
\quad\forall\,\, \alpha\in\Phi\}.
\]
Note that $C$ contains a complete set of coset representatives of $P/P^{m}$.

The following trivial example is instructive to get a feeling for what is
going on. Suppose that $m(\alpha)=1$ for all $\alpha\in\Phi$. Then $P^{m}=P$
and $\overline{\nabla}_{i}$ is the standard divided-difference operator on
$\mathbb{C}[P]$. In this case it is well known that \eqref{pi_rep} is
equivalent to the induced module $\widetilde{H}^{m}(\mathbf{k})\otimes
_{H(\mathbf{k})}V_{\{0\}}$ by the Bernstein-Zelevinsky \cite{Lu} presentation
of $\widetilde{H}^{m}(\mathbf{k})$. The $W$-subset $C$ in this case is
oversized, with $C\setminus\{0\}$ being the set of nonzero miniscule weights
in $P$.

In Section \ref{sect5} we construct the metaplectic polynomials in type $A$. The extension to arbitrary types will be treated in a forthcoming paper.  The $\textup{GL}_r$ double affine Hecke algebra
$\mathbb{H}^{(m)}$ has generators $T_0,\ldots,T_{r-1},\omega^{\pm 1}, x_1^{\pm m},\ldots, x_r^{\pm m}$, with $T_0,\ldots,T_{r-1},\omega^{\pm 1}$ Coxeter type generators of a copy of the $\textup{GL}_r$ affine Hecke algebra in $\mathbb{H}^{(m)}$ ($\omega$ is the generator of abelian group of group elements of length zero), and $T_1,\ldots,T_{r-1},x_1^{\pm m},\ldots,x_r^{\pm m}$ Bernstein-Zelevinsky type generators of the second copy of the $\textup{GL}_r$ affine Hecke algebra in
$\mathbb{H}^{(m)}$ (the $x_j^{\pm m}$ ($j=1,\ldots,r$) are generating its commutative subalgebra). The metaplectic representation of the second copy of the $\textup{GL}_r$
affine Hecke algebra is acting on Laurent polynomials in  $x_1^{\pm 1},\dots,x_r^{\pm 1}$, where $T_i$ for $1 \leq i < r$ act by \eqref{pi_rep} and $x^{\nu}$ ($\nu \in m\mathbb{Z}^{r}$) act by multiplication. It extends to a representation $\widehat{\pi}$ of $\mathbb{H}^m$, with $\omega$ acting as a twisted-cyclic permutation of the variables and 
$T_0$ by an appropriate affine version of the metaplectic Demazure-Lusztig operator
(see Theorem \ref{GLrmetabasic}). The representation $\widehat{\pi}$ is a metaplectic generalization of Cherednik's basic representation \cite{Ch,Ma}, which we call the metaplectic basic representation.  

The $\textup{GL}_r$ affine Hecke algebra generated by $T_0,\ldots,T_{r-1},\omega^{\pm 1}$ in its Bernstein-Zelevinsky presentation provides an abelian subalgebra generated by
elements $Y^{\pm m\epsilon_i}$ ($i=1,\ldots,r$). We define the metaplectic polynomials $E_\mu^{(m)}$ ($\mu\in\mathbb{Z}^r$) in Theorem \ref{thm:E-pols} as the simultaneous eigenfunctions of $\widehat{\pi}(Y^{m\epsilon_i})$ ($i=1,\ldots,r$).
It depends, besides on the standard Macdonald parameters, on the additional representation parameters $g_j$. The subfamily indexed by $m\mathbb{Z}^r$ recovers the nonsymmetric Macdonald polynomials in the variables $x_1^m,\ldots,x_r^m$ (see Remark \ref{classical_case}).  At the end of Section \ref{sect5} we provide examples of $GL_{3}$-metaplectic polynomials, highlighting some important phenomena.  In a followup paper other important properties, such as triangularity and orthogonality will be established in the context of arbitrary root systems.

\subsection{The structure of the paper}

We now briefly discuss the content of the paper. We introduce in Section
\ref{sect2} the appropriate metaplectic structures on the root systems and
affine Weyl and Hecke algebras. Section \ref{sec:reps} is devoted to the
metaplectic representation theory of the affine Weyl groups and generic affine
Hecke algebras. We introduce the reflection representation in Subsection
\ref{subs31}. Subsection \ref{subs32} forms the heart of our approach: we
introduce the analogue of \eqref{pi_rep} for generic Hecke and representation
parameters and establish that it defines a representation of the generic
affine Hecke algebra by identifying it with a quotient of the induced module
$\widetilde{H}^{m}(\mathbf{k})\otimes_{H(\mathbf{k})}V_{C}$ (see Theorem
\ref{mainTHM}). In Subsection \ref{subs33} we explain the localization
technique and apply it to $\pi$ (Theorem \ref{mainTHM}) to obtain the
generalized Chinta-Gunnells $W$-action (Theorem \ref{mWrep}).

In Section \ref{sect4} we form the associated metaplectic Demazure-Lusztig
operators and generalize some of the results from \cite{CGP} to the setting of
unequal Hecke and representation parameters. We also simplify some of the
proofs from that paper by using the standard symmetrizer and antisymmetrizer
elements in the Hecke algebra. This allows us to define a natural class of
\textquotedblleft Whittaker functions\textquotedblright\ for generic Hecke
algebras. It is natural to ask whether these more general functions arise as
actual matrix coefficients for some class of representations of $p$-adic
groups. This question is of particular interest since generic Hecke algebras
have begun to play an increasing role in the study of the Bernstein components
within the categories of smooth representations of $p$-adic groups, see, e.g.,
\cite{BK, H} and references therein.

In Section \ref{sect5}, we construct the metaplectic polynomials in type $A$.  We begin by setting up notations and modifications specific to the $GL_r$ case.  The double affine Hecke algebra $\mathbb{H}^{m}$ is presented in Section \ref{sect:DAHA}, and the metaplectic basic representation is in Section \ref{sect:metbasic}.   The characterization of the metaplectic polynomials as eigenfunctions of the metaplectic operators $\widehat{\pi}(Y^\nu)$ ($\nu\in m\mathbb{Z}^r$) may be found in Section \ref{sect:metpols}.  We also discuss the dependence on parameters, showing that we do not lose any generality by taking the quadratic form $\mathbf{Q}$ to satisfy $\mathbf{Q}(\alpha) = 1$ for $\alpha$ a root.  Finally, in the Appendix, we provide a list of metaplectic polynomials for $r=3$ and $1 \leq m \leq 5$.


Let us conclude with remarking that the localization procedure we use in this
paper is instrumental in Cherednik's construction of quantum affine
Knizhnik-Zamolodchikov equations attached to affine Hecke algebra modules.
Closely related to it is the role of the localization procedure for type $A$
in the context of integrable vertex models with $U_{q}(\widehat{\mathfrak{sl}%
}_{n})$-symmetry, in the special cases that the associated braid group action
descends to an affine Hecke algebra action, in which case the localization
procedure is often referred to as Baxterization (see, e.g., \cite{Ch,S} and
references therein). This is exactly the context in which the metaplectic
Whittaker function can be realized as a partition function, the corresponding
integrable model being \textquotedblleft metaplectic ice\textquotedblright,
see \cite{BBB, BBBF, BBBG}. It is an intriguing open question whether there is
a conceptual connection with the current interpretation of the Chinta-Gunnells
action through localization.\newline

\noindent\textbf{Acknowledgement:} The research of the first author was
partially supported by a Simons Foundation grant (509766). Part of this work
was initiated during the Workshop on Hecke Algebras and Lie Theory at the
University of Ottawa, co-organized by the first author and attended by the
second; both authors thank the National Science Foundation (DMS-162350), the
Fields Institute, and the University of Ottawa for funding this workshop.


\section{The extended affine Hecke algebra}

\label{sect2}

\subsection{The root system}

Let $E$ be an Euclidean space with scalar product $(\cdot,\cdot)$ and norm
$\|\cdot\|$. Let $\Phi\subset E$ be an irreducible reduced root system, and
$W\subset O(E)$ its Weyl group. The reflection in $\alpha\in\Phi$ is denoted
by $s_{\alpha}\in W$, and its co-root is $\alpha^{\vee}:=2\alpha
/\|\alpha\|^{2}$.

Fix a base $\{\alpha_{1},\ldots,\alpha_{r}\}$ of $\Phi$. Let $\Phi^{+}$ be the
corresponding set of positive roots and write $s_{i}:=s_{\alpha_{i}}$ for
$i=1,\ldots,r$. Let
\[
P:=\{\lambda\in V \,\, | \,\, (\lambda,\alpha^{\vee})\in\mathbb{Z}\quad
\forall\alpha\in\Phi\}= \bigoplus_{i=1}^{r}\mathbb{Z}\varpi_{i}%
\]
be the weight lattice of $\Phi$ with $\varpi_{i}\in E$ the fundamental
weights, defined by $(\varpi_{i},\alpha_{j}^{\vee})=\delta_{i,j}$. Let
\[
Q=\mathbb{Z}\Phi=\bigoplus_{i=1}^{r}\mathbb{Z}\alpha_{i}%
\]
be the root lattice of $\Phi$.


\subsection{The metaplectic structure}

In the theory of metaplectic Whittaker functions, a new root system $\Phi^{m}$
is attached to the metaplectic covering data of the reductive group over the
non-archimedean local field, cf. \cite{CG,CGP} and references therein. We
recall in this subsection this additional metaplectic data on the root system.

Fix a $W$-invariant quadratic form $\mathbf{Q}: P \rightarrow\mathbb{Q}$ which
takes integral values on $Q$ and write $\mathbf{B}: P\times P\rightarrow
\mathbb{Q}$ for the associated symmetric bilinear pairing
\[
\mathbf{B}(\lambda,\mu):=\mathbf{Q}(\lambda+\mu)-\mathbf{Q}(\lambda
)-\mathbf{Q}(\mu),\qquad\lambda,\mu\in P.
\]
Then $\mathbf{Q}(\cdot)=\frac{\kappa}{2}\|\cdot\|^{2}$ for some $\kappa
\in\mathbb{R}^{\times}$, and hence $\mathbf{B}(\lambda,\mu)=\kappa(\lambda
,\mu)$ for all $\lambda,\mu\in P$. In particular, for all $\lambda\in P$ and
$\alpha\in\Phi$,
\begin{equation}
\label{relin}\frac{\mathbf{B}(\lambda,\alpha)}{\mathbf{Q}(\alpha)}%
=(\lambda,\alpha^{\vee}).
\end{equation}

Let $n\in\mathbb{Z}_{>0}$ and define
\[
m(\alpha):=\frac{n}{\textup{gcd}(n,\mathbf{Q}(\alpha))}=\frac{\textup{lcm}%
(n,\mathbf{Q}(\alpha))}{\mathbf{Q}(\alpha)} \qquad\forall\, \alpha\in\Phi.
\]
Note that $m: \Phi\rightarrow\mathbb{Z}_{>0}$ is $W$-invariant.

Set $\Phi^{m}:=\{\alpha^{m}:=m(\alpha)\alpha\}_{\alpha\in\Phi}\subset E$. Then
$\Phi^{m}$ is a root system. In fact, if $m$ is constant then $\Phi^{m}$ is
isomorphic to $\Phi$, while if $m$ is nonconstant then $\Phi^{m}$ is
isomorphic to the co-root system $\Phi^{\vee}=\{\alpha^{\vee}\}_{\alpha\in
\Phi}$ (this follows from the definition of $m(\alpha)$ and the fact that
$\mathbf{Q}(\cdot) =\frac{\kappa}{2}\|\cdot\|^{2}$). In particular,
$\{\alpha_{1}^{m},\ldots,\alpha_{r}^{m}\}$ is a base of $\Phi^{m}$ and $W$ is
the Weyl group of $\Phi^{m}$.

Write $Q^{m}$ for the root lattice of $\Phi^{m}$ and $P^{m}$ for the weight
lattice of $\Phi^{m}$. Since $(\alpha^{m})^{\vee}=m(\alpha)^{-1}\alpha^{\vee}$
for $\alpha\in\Phi$, we have
\[
Q^{m}=\bigoplus_{i=1}^{r}\mathbb{Z}\alpha_{i}^{m},\qquad P^{m}=\bigoplus
_{i=1}^{r}\mathbb{Z}\varpi_{i}^{m}%
\]
with $\varpi_{i}^{m}:=m(\alpha_{i})\varpi_{i}$ the fundamental weights of
$P^{m}$.

\begin{lem}
\label{relform} \textbf{a.} For $\alpha\in\Phi$ and $\lambda\in P$ we have
\[
\mathbf{B}(\lambda,\alpha^{m})=\textup{lcm}(n,\mathbf{Q}(\alpha))\bigl(\lambda
,\alpha^{\vee}).
\]
\textbf{b.} For $\alpha\in\Phi$ and $\lambda\in P$ we have
\[
\mathbf{B}(\lambda,\alpha)\equiv0\,\,\, \textup{\ mod } n\,\, \Leftrightarrow
(\lambda,\alpha^{\vee})\equiv0\,\,\, \textup{\ mod } m(\alpha).
\]

\end{lem}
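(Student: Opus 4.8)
The plan is to reduce everything to the single identity \eqref{relin}, which expresses the ratio $\mathbf{B}(\lambda,\alpha)/\mathbf{Q}(\alpha)$ as the integer $(\lambda,\alpha^{\vee})$, together with the definition $m(\alpha)=n/\gcd(n,\mathbf{Q}(\alpha))$. For part \textbf{a}, I would simply write $\mathbf{B}(\lambda,\alpha^{m})=\mathbf{B}(\lambda,m(\alpha)\alpha)=m(\alpha)\mathbf{B}(\lambda,\alpha)$ using bilinearity of $\mathbf{B}$, and then substitute $\mathbf{B}(\lambda,\alpha)=\mathbf{Q}(\alpha)(\lambda,\alpha^{\vee})$ from \eqref{relin}. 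This gives $m(\alpha)\mathbf{Q}(\alpha)(\lambda,\alpha^{\vee})$, and one recognizes $m(\alpha)\mathbf{Q}(\alpha)=\textup{lcm}(n,\mathbf{Q}(\alpha))$ (the second form of $m(\alpha)$ displayed just before Lemma \ref{relform}), yielding the claimed formula. Note here I should be mildly careful that $\mathbf{Q}(\alpha)$ is a positive integer for $\alpha\in\Phi$, since $\mathbf{Q}$ takes integral values on $Q$ and $\mathbf{Q}(\cdot)=\tfrac{\kappa}{2}\|\cdot\|^2$ with $\kappa\in\mathbb{R}^\times$ forces $\kappa>0$; this legitimizes writing $\gcd$ and $\textup{lcm}$ of $n$ and $\mathbf{Q}(\alpha)$.

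For part \textbf{b}, the cleanest route is to combine part \textbf{a} with an elementary number-theoretic fact: for integers $a,b>0$, a multiple $c\cdot b$ of $b$ is divisible by $a$ if and only if $c$ is divisible by $a/\gcd(a,b)$. Apply this with $a=n$, $b=\mathbf{Q}(\alpha)$, and $c=(\lambda,\alpha^{\vee})\in\mathbb{Z}$: since $\mathbf{B}(\lambda,\alpha)=\mathbf{Q}(\alpha)(\lambda,\alpha^{\vee})$ by \eqref{relin}, we get $\mathbf{B}(\lambda,\alpha)\equiv 0\bmod n$ if and only if $(\lambda,\alpha^{\vee})\equiv 0\bmod n/\gcd(n,\mathbf{Q}(\alpha))=m(\alpha)$, which is exactly the assertion. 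If one prefers to avoid invoking the number-theoretic lemma as a black box, one can prove it in a line: write $d=\gcd(a,b)$, $a=da'$, $b=db'$ with $\gcd(a',b')=1$; then $a\mid cb \iff a'\mid cb' \iff a'\mid c$ by coprimality, and $a'=a/\gcd(a,b)$.

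The only genuine subtlety — and really the only place where anything could go wrong — is the interplay between the two expressions for $m(\alpha)$ and making sure the divisibility lemma is applied with the correct modulus; everything else is bilinearity of $\mathbf{B}$ plus substitution. So I do not expect a serious obstacle here: the lemma is essentially bookkeeping designed to set up the later identification of $P^m$ (namely that $\lambda\in P^m$ iff $\mathbf{B}(\lambda,\alpha)\equiv 0\bmod n$ for all $\alpha\in\Phi$, since $(\lambda,(\alpha^m)^\vee)=(\lambda,\alpha^\vee)/m(\alpha)\in\mathbb{Z}$ is precisely the condition $m(\alpha)\mid(\lambda,\alpha^\vee)$). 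I would present part \textbf{a} first since part \textbf{b} reads off from it immediately, and I would state the divisibility fact explicitly so the argument is self-contained.
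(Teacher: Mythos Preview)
Your proof is correct and follows essentially the same route as the paper: bilinearity plus \eqref{relin} for part \textbf{a}, and the elementary divisibility fact $n\mid \mathbf{Q}(\alpha)c \iff (n/\gcd(n,\mathbf{Q}(\alpha)))\mid c$ for part \textbf{b}. One small quibble: your claim that $\kappa>0$ is not actually forced by the hypotheses (the paper only assumes $\kappa\in\mathbb{R}^\times$), but this is harmless since all that is needed is that $\mathbf{Q}(\alpha)$ is a nonzero integer, and the paper's own display $m(\alpha)=\textup{lcm}(n,\mathbf{Q}(\alpha))/\mathbf{Q}(\alpha)$ already presumes the relevant sign convention.
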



\begin{proof}
\textbf{a} For $\lambda\in P$ and $\alpha\in\Phi$ we have
\[%
\begin{split}
\mathbf{B}(\lambda,\alpha^{m}) & =m(\alpha)\mathbf{B}(\lambda,\alpha)\\
& =\frac{n\, \mathbf{B}(\lambda,\alpha)}{\textup{gcd}(n,\mathbf{Q}(\alpha))}\\
& =\frac{n\mathbf{Q}(\alpha)}{\textup{gcd}(n,\mathbf{Q}(\alpha))}%
\frac{\mathbf{B}(\lambda,\alpha)}{\mathbf{Q}(\alpha)} =\textup{lcm}%
(n,\mathbf{Q}(\alpha))(\lambda,\alpha^{\vee}).
\end{split}
\]
\textbf{b.} For $\lambda\in P$ and $\alpha\in\Phi$ we have
\[%
\begin{split}
\mathbf{B}(\lambda,\alpha)\equiv0\,\,\, \textup{\ mod } n\,\,  &
\Leftrightarrow\,\, \mathbf{Q}(\alpha)(\lambda,\alpha^{\vee})\equiv0\,\,\,
\textup{\ mod } n\\
& \Leftrightarrow\,\, (\lambda,\alpha^{\vee})\equiv0\,\,\, \textup{\ mod }
\frac{n}{\textup{gcd}(n,\mathbf{Q}(\alpha))}\\
& \Leftrightarrow\,\, (\lambda,\alpha^{\vee})\equiv0\,\,\, \textup{\ mod }
m(\alpha). \qedhere
\end{split}
\]

\end{proof}


\begin{lem}
\label{relform2}
\[%
\begin{split}
P^{m} & =\{\lambda\in P\,\, | \,\, (\lambda,\alpha^{\vee})\equiv0\,\,\,
\textup{mod}\,\, m(\alpha)\,\,\, \forall\, \alpha\in\Phi\}\\
& =\{\lambda\in P \,\, | \,\, \mathbf{B}(\lambda,\alpha)\equiv0\,\,\,
\textup{mod }\, n\,\,\, \forall\, \alpha\in\Phi\}.
\end{split}
\]

\end{lem}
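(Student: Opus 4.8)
The plan is to prove the two equalities describing $P^m$ by reducing everything to Lemma \ref{relform}. Recall that $P^m$ is by definition the weight lattice of the root system $\Phi^m$, so
\[
P^m=\{\lambda\in E\,\,|\,\,(\lambda,(\alpha^m)^\vee)\in\mathbb{Z}\,\,\,\forall\,\alpha\in\Phi\}.
\]
First I would intersect this with $P$: since $P^m\subseteq P$ (because each $(\alpha^m)^\vee=m(\alpha)^{-1}\alpha^\vee$ is a rational multiple of $\alpha^\vee$, and in fact $P^m$ sits inside $P$ as the sublattice $\bigoplus_i\mathbb{Z}\varpi_i^m$ with $\varpi_i^m=m(\alpha_i)\varpi_i$), it suffices to characterise which $\lambda\in P$ lie in $P^m$. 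For $\lambda\in P$, using $(\alpha^m)^\vee=m(\alpha)^{-1}\alpha^\vee$ we get $(\lambda,(\alpha^m)^\vee)=(\lambda,\alpha^\vee)/m(\alpha)$, which is an integer precisely when $(\lambda,\alpha^\vee)\equiv0\ \textup{mod}\ m(\alpha)$. This gives the first equality.

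Next I would derive the second equality from the first by invoking Lemma \ref{relform}\textbf{b}, which states exactly that for $\lambda\in P$ and $\alpha\in\Phi$ one has $\mathbf{B}(\lambda,\alpha)\equiv0\ \textup{mod}\ n$ if and only if $(\lambda,\alpha^\vee)\equiv0\ \textup{mod}\ m(\alpha)$. Quantifying over all $\alpha\in\Phi$ then converts the first description of $P^m$ into the second. One small point worth noting is that, since $m$ and the conditions are $W$-invariant, it would suffice to check the congruences only on the simple roots $\alpha_1,\dots,\alpha_r$; but stating the conditions for all $\alpha\in\Phi$ is harmless and matches the form used in Lemma \ref{relform}, so I would not belabour this.

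There is essentially no serious obstacle here: the statement is a bookkeeping consequence of the relation $(\alpha^m)^\vee=m(\alpha)^{-1}\alpha^\vee$ together with Lemma \ref{relform}\textbf{b}. The only thing to be a little careful about is the containment $P^m\subseteq P$, which is needed so that ``$\lambda\in P$ with the congruence conditions'' is genuinely equivalent to ``$\lambda\in P^m$'' rather than merely a superset; this follows because $\varpi_i^m=m(\alpha_i)\varpi_i\in P$ generate $P^m$, or alternatively because $\Phi^m\subseteq Q\subseteq P$ forces its weight lattice to contain $P$'s dual constraints — in any case it is immediate from the explicit description of $P^m$ recalled just before Lemma \ref{relform}. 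So the proof is just: unwind the definition of $P^m$, intersect with $P$, apply $(\alpha^m)^\vee=m(\alpha)^{-1}\alpha^\vee$ to get the first line, then apply Lemma \ref{relform}\textbf{b} to get the second.
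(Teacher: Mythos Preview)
Your proposal is correct and follows essentially the same approach as the paper: the first equality is obtained from $(\alpha^{m})^{\vee}=m(\alpha)^{-1}\alpha^{\vee}$, and the second from Lemma \ref{relform}\textbf{b}. The paper's proof is terser, but your added remarks about $P^m\subseteq P$ and the sufficiency of checking on simple roots are accurate and do not alter the argument.
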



\begin{proof}
The first equality follows from the fact that $(\alpha^{m})^{\vee}%
=m(\alpha)^{-1}\alpha^{\vee}$ for $\alpha\in\Phi$. The second equality follows
immediately from part \textbf{b} of Lemma \ref{relform}.
\end{proof}



\subsection{The extended affine Hecke algebra}

\label{ssec:eaha}

We start with the definition of the finite Hecke algebra. Let $\mathbf{k}:
\Phi\rightarrow\mathbb{C}^{\times}$ be a $W$-invariant function and write
$\mathbf{k}_{\alpha}$ for the value of $\mathbf{k}$ at $\alpha\in\Phi$. Set
$\mathbf{k}_{i}:= \mathbf{k}_{\alpha_{i}}$ for $i=1,\ldots,r$.


\begin{defi}
\label{H} The Hecke algebra $H(\mathbf{k})$ associated to the root system
$\Phi$ is the unital associative algebra over $\mathbb{C}$ generated by
$T_{1},\ldots,T_{r}$ with defining relations

\begin{enumerate}
\item[\textbf{a.}] $(T_{i}-\mathbf{k}_{i})(T_{i}+\mathbf{k}_{i}^{-1})=0$ for
$i=1,\ldots,r$,

\item[\textbf{b.}] For $1\leq i\not =j\leq r$ the braid relation $T_{i}%
T_{j}T_{i}\cdots=T_{j}T_{i}T_{j}\cdots$ ($m_{ij}$ factors on each side, with
$m_{ij} $ the order of $s_{i}s_{j}$ in $W$).
\end{enumerate}
\end{defi}

Define the length of $w\in W$ by
\[
\ell(w):=\#\bigl(\Phi^{+}\cap w^{-1}\Phi^{-}\bigr).
\]
For $w=s_{i_{1}}\cdots s_{i_{\ell}}$ ($1\leq i_{j}\leq r$) a reduced
expression of $w\in W$ (i.e. $\ell=\ell(w)$), set
\[
T_{w}:=T_{i_{1}}\cdots T_{i_{t}}\in H(\mathbf{k}).
\]
The $T_{w}$ ($w\in W$) are well defined and form a linear basis of
$H(\mathbf{k})$.

We now introduce the extended affine Hecke algebra $\widetilde{H}%
^{m}(\mathbf{k})$ associated to the finite root system $\Phi^{m}$ through its
Bernstein-Zelevinsky presentation (see \cite{Lu}). It contains as subalgebras
the finite Hecke algebra $H(\mathbf{k})$ and the group algebra $\mathbb{C}%
[P^{m}]$ of the weight lattice $P^{m}$ of $\Phi^{m}$. We write the canonical
basis elements of $\mathbb{C}[P^{m}]$ in multiplicative form $x^{\mu}$
($\mu\in P^{m}$), so that $x^{\mu}x^{\nu}=x^{\mu+\nu}$ and $x^{0}=1$. The Weyl
group $W$ acts naturally on $\mathbb{C}[P^{m}]$ by algebra automorphisms.

For $1\leq i\leq r$ there exists a well defined linear operator $\nabla
_{i}^{m}$ on $\mathbb{C}[P^{m}]$ satisfying
\[
\nabla_{i}^{m}(x^{\nu}):=\frac{x^{\nu}-x^{s_{i}\nu}}{1-x^{\alpha_{i}^{m}}}%
\]
for $\nu\in P^{m}$ (note that $x^{\nu}-x^{s_{i}\nu}$ is divisible by
$1-x^{\alpha_{i}^{m}}$ in $\mathbb{C}[P^{m}]$). It is called the divided
difference operator associated to the simple root $\alpha_{i}^{m}$.


\begin{defi}
\label{tildeH} The extended affine Hecke algebra $\widetilde{H}^{m}%
(\mathbf{k}) $ is the unital associative algebra over $\mathbb{C}$ generated
by the algebras $H(\mathbf{k})$ and $\mathbb{C}[P^{m}]$, with additional
defining relations
\begin{equation}
\label{BL1}T_{i}x^{\nu}-x^{s_{i}\nu}T_{i}=(\mathbf{k}_{i}-\mathbf{k}_{i}%
^{-1})\nabla_{i}^{m}(x^{\nu})
\end{equation}
for $i=1,\ldots,r$ and $\nu\in P^{m}$.
\end{defi}


It is well known that the multiplication map defines a linear isomorphism
\begin{equation}
\label{BZiso}H(\mathbf{k})\otimes\mathbb{C}[P^{m}]\overset{\sim
}{\longrightarrow} \widetilde{H}^{m}(\mathbf{k}).
\end{equation}


\section{Metaplectic representations}

\label{sec:reps}

\subsection{The reflection representation of $H(\mathbf{k})$}

\label{subs31}
Set
\[
V:=\bigoplus_{\lambda\in P}\mathbb{C}v_{\lambda}.
\]
It inherits a left $W$-action by the linear extension of the canonical action
of $W$ on $P$. For a $W$-invariant subset $D\subset P$ we write
\[
V_{D}:=\bigoplus_{\lambda\in D}\mathbb{C}v_{\lambda}%
\]
for the corresponding $W$-submodule of $V$. Then $V=\bigoplus_{\lambda\in
P^{+}}V_{\mathcal{O}_{\lambda}}$ with $\mathcal{O}_{\lambda}=W\lambda$ the
$W$-orbit of $\lambda$ in $P$ and $P^{+}\subset P$ the cone of dominant
weights of $\Phi$ with respect to the base $\{\alpha_{1},\ldots,\alpha_{r}\}$.
In this subsection we deform the $W$-action on $V_{D}$ and $V$ to a
$H(\mathbf{k})$-action.

Fix $\lambda\in P^{+}$. The stabilizer subgroup
\[
W_{\lambda}:=\{w\in W \,\, | \,\, w\lambda=\lambda\}
\]
is a standard parabolic subgroup of $W$. It is generated by the simple
reflections $s_{i}$ ($i\in I_{\lambda}$), with $I_{\lambda}$ the index subset
\[
I_{\lambda}:=\{i\in\{1,\ldots,r\}\,\, | \,\, s_{i}\lambda=\lambda\}.
\]
Note that $V_{\mathcal{O}_{\lambda}}\simeq\mathbb{C}[W]\otimes_{\mathbb{C}%
[W_{\lambda}]}\mathbb{C}$ as $W$-modules, with $\mathbb{C}$ regarded as the
trivial $W_{\lambda}$-module. This description leads to a natural Hecke
deformation of the $W$-action on $V_{\mathcal{O}_{\lambda}}$ as follows.

Let $W^{\lambda}$ be the minimal coset representatives of $W/W_{\lambda}$,
which can be characterized by
\[
W^{\lambda}=\{w\in W \,\, | \,\, \ell(ws_{i})=\ell(w)+1\quad\forall\, i\in
I_{\lambda}\}.
\]
For $\lambda\in P^{+}$ let $H_{\lambda}(\mathbf{k})\subset H(\mathbf{k})$ be
the subalgebra generated by the $T_{i}$ ($i\in I_{\lambda}$). Write
$\mathbb{C}_{\lambda}$ for the trivial one-dimensional $H_{\lambda}%
(\mathbf{k})$-module defined by $T_{i}\mapsto\mathbf{k}_{i}$ ($i\in
I_{\lambda}$). Consider the linear isomorphism
\[
\phi_{\lambda}: H(\mathbf{k})\otimes_{H_{\lambda}(\mathbf{k})}\mathbb{C}%
_{\lambda}\overset{\sim}{\longrightarrow} V_{\mathcal{O}_{\lambda}}%
\]
defined by $\phi_{\lambda}(T_{w}\otimes_{H_{\lambda}(\mathbf{k})}%
1)=v_{w\lambda}$ for $w\in W^{\lambda}$. Transporting the canonical
$H(\mathbf{k})$-module structure on $H(\mathbf{k})\otimes_{H_{\lambda
}(\mathbf{k})}\mathbb{C}_{\lambda}$ to $V_{\mathcal{O}_{\lambda}}$ through the
linear isomorphism $\phi_{\lambda}$ turns $V_{\mathcal{O}_{\lambda}}$ into a
$H(\mathbf{k})$-module. The resulting direct sum $H(\mathbf{k})$-module
structure on $V=\bigoplus_{\lambda\in P^{+}}V_{\mathcal{O}_{\lambda}}$ can be
explicitly described as follows.

\begin{lem}
\label{refl_rep} For $\mu\in P$ we have
\[
T_{i}v_{\mu}=
\begin{cases}
v_{s_{i}\mu}\quad & \hbox{ if }\quad(\mu,\alpha_{i}^{\vee})>0,\\
\mathbf{k}_{i}v_{\mu}\quad & \hbox{ if } \quad(\mu,\alpha_{i}^{\vee})=0,\\
(\mathbf{k}_{i}-\mathbf{k}_{i}^{-1})v_{\mu}+v_{s_{i}\mu}\quad & \hbox{ if }
\quad(\mu,\alpha_{i}^{\vee})<0
\end{cases}
\]
for $i=1,\ldots,r$.
\end{lem}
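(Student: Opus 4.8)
The claim describes the $H(\mathbf{k})$-module structure on $V=\bigoplus_{\lambda\in P^{+}}V_{\mathcal{O}_{\lambda}}$ transported from the parabolically induced modules $H(\mathbf{k})\otimes_{H_{\lambda}(\mathbf{k})}\mathbb{C}_{\lambda}$ via the isomorphisms $\phi_{\lambda}$. Since the module structure is defined orbit by orbit, I would fix a dominant weight $\lambda\in P^{+}$, a minimal coset representative $w\in W^{\lambda}$, and a simple reflection $s_i$, and compute $T_i v_{w\lambda}=\phi_{\lambda}\bigl(T_iT_w\otimes 1\bigr)$. The weight $\mu=w\lambda$ is an arbitrary element of $\mathcal{O}_\lambda$, so the three cases in the statement should correspond exactly to the three possibilities for $\ell(s_iw)$ versus $\ell(w)$ together with the position of $w$ relative to $W^\lambda$.

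The key combinatorial input is the standard theory of minimal coset representatives: for $w\in W^{\lambda}$ and a simple reflection $s_i$, either (i) $s_iw\in W^{\lambda}$ and $\ell(s_iw)=\ell(w)+1$, or (ii) $s_iw\in W^{\lambda}$ and $\ell(s_iw)=\ell(w)-1$, or (iii) $s_iw\notin W^{\lambda}$, in which case $s_iw=w s_j$ for some $j\in I_{\lambda}$ and $\ell(ws_j)=\ell(w)+1$ (equivalently $w^{-1}\alpha_i\in\Phi^+$ is a positive root fixed by $s_j$, so $s_i w\lambda = w s_j \lambda = w\lambda$). I would match these three cases to the sign of $(\mu,\alpha_i^\vee)=(w\lambda,\alpha_i^\vee)=(\lambda,(w^{-1}\alpha_i)^\vee)$: since $\lambda$ is dominant, this inner product is $>0$ in case (i) (as $w^{-1}\alpha_i\in\Phi^+$ and is not orthogonal to $\lambda$), $<0$ in case (ii) (as $w^{-1}\alpha_i\in\Phi^-$), and $=0$ in case (iii). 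Then in each case I apply the defining relations of $H(\mathbf{k})$: in case (i), $T_iT_w=T_{s_iw}$ with $s_iw\in W^\lambda$, so $T_iv_{w\lambda}=v_{s_iw\lambda}=v_{s_i\mu}$; in case (ii), write $w=s_iw'$ with $w'=s_iw\in W^\lambda$ reduced, so $T_iT_w=T_i^2T_{w'}=\bigl((\mathbf{k}_i-\mathbf{k}_i^{-1})T_i+1\bigr)T_{w'}=(\mathbf{k}_i-\mathbf{k}_i^{-1})T_w+T_{w'}$, giving $T_iv_\mu=(\mathbf{k}_i-\mathbf{k}_i^{-1})v_\mu+v_{s_i\mu}$ (note $s_i\mu=w'\lambda$); in case (iii), $T_iT_w=T_{s_iw}=T_{ws_j}=T_wT_j$, and since $j\in I_\lambda$ we have $T_j\otimes 1=\mathbf{k}_j\otimes 1=\mathbf{k}_i\otimes 1$ in $\mathbb{C}_\lambda$ (using $W$-invariance of $\mathbf{k}$, so $\mathbf{k}_j=\mathbf{k}_i$), whence $T_iv_\mu=\mathbf{k}_iv_\mu$.

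The main obstacle is bookkeeping in case (iii): one must verify carefully that $s_iw\notin W^\lambda$ forces $s_iw=ws_j$ for a \emph{simple} reflection $s_j$ with $j\in I_\lambda$, rather than merely $s_iw\in wW_\lambda$, and that consequently $T_j\otimes_{H_\lambda(\mathbf{k})}1$ acts by the scalar $\mathbf{k}_j$. This is the point where $H_\lambda(\mathbf{k})$ being a \emph{standard} parabolic (generated by the $T_i$, $i\in I_\lambda$) and $W^\lambda$ being minimal coset representatives both get used; the relevant fact is the elementary lemma that for $w\in W^\lambda$ and a simple $s_i$ with $\ell(s_iw)=\ell(w)+1$ but $s_iw\notin W^\lambda$, one has $\ell(s_iws_j)=\ell(w)$ for the unique $j\in I_\lambda$ with $s_iws_j$ of minimal length in its coset, and in fact $s_iws_j=w$. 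Once this is in place, the identity $\mathbf{k}_j=\mathbf{k}_i$ follows because $\alpha_i$ and $w\alpha_j$ are in the same $W$-orbit (indeed $w\alpha_j=\alpha_i$), and $\mathbf{k}$ is $W$-invariant. Everything else is a direct application of the quadratic and braid relations together with the length-additivity $T_iT_w=T_{s_iw}$ when $\ell(s_iw)>\ell(w)$.
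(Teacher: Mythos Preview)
Your proposal is correct and follows essentially the same route as the paper: both arguments set $\mu=w\lambda$ with $w\in W^\lambda$, establish the trichotomy matching the sign of $(\mu,\alpha_i^\vee)$ with the length behaviour of $s_iw$ and its membership in $W^\lambda$, and then compute $T_iT_w$ accordingly; the paper cites Deodhar \cite[Lem.~3.2]{D} for the key fact that $\ell(s_iw)=\ell(w)+1$ with $s_iw\notin W^\lambda$ forces $s_iw=ws_j$ for some simple $j\in I_\lambda$, which is exactly what you flag as the main obstacle. One small slip: in your case (iii) the root $w^{-1}\alpha_i$ is not \emph{fixed} by $s_j$ but rather equals $\alpha_j$ (hence is negated by $s_j$); the correct statement is that $\lambda$ is fixed by $s_j$, which is what gives $s_iw\lambda=ws_j\lambda=w\lambda$.
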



\begin{proof}
Write $\mu=w\lambda$ with $\lambda\in P^{+}$ and $w\in W^{\lambda}$. We claim that

\begin{enumerate}
\item[\textbf{(1)}] $\bigl(\mu,\alpha_{i}^{\vee}\bigr)>0$ $\Leftrightarrow$
$\ell(s_{i}w)=\ell(w)+1$ and $s_{i}w\in W^{\lambda}$;

\item[\textbf{(2)}] $\bigl(\mu,\alpha_{i}^{\vee}\bigr)=0$ $\Leftrightarrow$
$\ell(s_{i}w)=\ell(w)+1$ and $s_{i}w\not \in W^{\lambda}$;

\item[\textbf{(3)}] $\bigl(\mu,\alpha_{i}^{\vee}\bigr)<0$ $\Leftrightarrow$
$\ell(s_{i}w)=\ell(w)-1$. In this case we have $s_{i}w\in W^{\lambda}$.
\end{enumerate}

Since each $w\in W^{\lambda}$ satisfies exactly one of these three conditions,
it suffices to prove the $\Leftarrow$'s.\newline\textbf{Case (1):} $\ell
(s_{i}w)=\ell(w)+1$ and $s_{i}w\in W^{\lambda}$. Then $\ell(s_{i}w)=\ell(w)+1$
implies $w^{-1}\alpha_{i}\in\Phi^{+}$, hence $(\mu,\alpha_{i}^{\vee}%
)=(\lambda,w^{-1}\alpha_{i}^{\vee})\geq0$. The assumption $s_{i}w\in
W^{\lambda}$ implies $s_{i}w\lambda\not =w\lambda$, in particular $(\mu
,\alpha_{i}^{\vee})\not =0$. Hence $(\mu,\alpha_{i}^{\vee})>0$.\newline%
\textbf{Case (2):} $\ell(s_{i}w)=\ell(w)+1$ and $s_{i}w\not \in W^{\lambda}$.
Then $s_{i}w=ws_{j}$ for some $j\in I_{\lambda}$ by \cite[Lem. 3.2]{D}. Hence
$s_{i}w\lambda=w\lambda$ and consequently $(\mu,\alpha_{i}^{\vee})=0$%
.\newline\textbf{Case (3):} $\ell(s_{i}w)=\ell(w)-1$. Then $(\mu,\alpha
_{i}^{\vee})=(\lambda,w^{-1}\alpha_{i}^{\vee})\leq0$ since $w^{-1}\alpha
_{i}\in\Phi^{-}$. If $s_{i}w\lambda=w\lambda$ then $s_{i}w$ would be a
representative of $wW_{\lambda}$ of small length than $w$, which is absurd.
Hence $s_{i}w\lambda\not =w\lambda$, and consequently $(\mu,\alpha_{i}^{\vee
})<0$. If $s_{i}w\not \in W^{\lambda}$ then the minimal length representative
$w^{\prime}\in W^{\lambda}$ of the coset $s_{i}wW_{\lambda}$ has length
strictly smaller than $\ell(s_{i}w)=\ell(w)-1$. But then $wW_{\lambda}$
contains an element of length strictly smaller than $\ell(w)$, which is
absurd. Hence $s_{i}w\in W^{\lambda}$.

It is now easy to conclude the proof of the lemma:\newline\textbf{Case (1):}
$\ell(s_{i}w)=\ell(w)+1$ and $s_{i}w\in W^{\lambda}$. Then
\[
T_{i}v_{\mu}=\phi_{\lambda}\bigl(T_{i}T_{w}\otimes_{H_{\lambda}(\mathbf{k}%
)}1\bigr)=\phi_{\lambda}\bigl(T_{s_{i}w}\otimes_{H_{\lambda}(\mathbf{k}%
)}1\bigr)=v_{s_{i}\mu}.
\]
\textbf{Case (2):} $\ell(s_{i}w)=\ell(w)+1$ and $s_{i}w\not \in W^{\lambda}$.
Let $j\in I_{\lambda}$ such that $s_{i}w=ws_{j}$. Note that $\alpha_{j}\in
W\alpha_{i}$, hence $\mathbf{k}_{i}=\mathbf{k}_{j}$, and that $\ell
(ws_{j})=\ell(w)+1$, so that $T_{i}T_{w}=T_{s_{i}w}=T_{ws_{j}}=T_{w}T_{j}$.
Then
\[
T_{i}v_{\mu}=\phi_{\lambda}\bigl(T_{i}T_{w}\otimes_{H_{\lambda}(\mathbf{k}%
)}1\bigr)= \phi_{\lambda}\bigl(T_{w}T_{j}\otimes_{H_{\lambda}(\mathbf{k}%
)}1\bigr)=\mathbf{k}_{i}v_{\mu}.
\]
\textbf{Case (3):} $l(s_{i}w)=l(w)-1$. Using $T_{i}^{2}=(\mathbf{k}%
_{i}-\mathbf{k}_{i}^{-1})T_{i}+1$ we get
\[
T_{i}T_{w}=T_{i}^{2}T_{s_{i}w}=(\mathbf{k}_{i}-\mathbf{k}_{i}^{-1}%
)T_{w}+T_{s_{i}w}%
\]
in $H(\mathbf{k})$, and hence
\[
T_{i}v_{\mu}=(\mathbf{k}_{i}-\mathbf{k}_{i}^{-1})v_{\mu}+v_{s_{i}\mu}%
\]
since $s_{i}w\in W^{\lambda}$.
\end{proof}


\subsection{The metaplectic affine Hecke algebra representation}

\label{subs32}

For $s\in\mathbb{Z}_{>0}$ and $t\in\mathbb{Z}$ let $\mathbf{r}_{s}%
(t)\in\{0,\ldots,s-1\}$ be the remainder of $t$ modulo $s$. Define
$\mathbf{q},\mathbf{r}: P\rightarrow P$ by
\[%
\begin{split}
\mathbf{r}(\lambda) & :=\sum_{i=1}^{r}\mathbf{r}_{m(\alpha_{i})}%
\bigl((\lambda,\alpha_{i}^{\vee})\bigr)\varpi_{i},\\
\mathbf{q}(\lambda) & :=\lambda-\mathbf{r}(\lambda).
\end{split}
\]

\begin{lem}
\label{Proj} $\mathbf{q}(P)\subseteq P^{m}$.
\end{lem}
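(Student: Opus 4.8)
The statement to prove is that for every $\lambda\in P$, the weight $\mathbf{q}(\lambda)=\lambda-\mathbf{r}(\lambda)$ lies in $P^m$. By Lemma \ref{relform2}, it suffices to check that $\bigl(\mathbf{q}(\lambda),\alpha^\vee\bigr)\equiv 0\pmod{m(\alpha)}$ for every $\alpha\in\Phi$. Since $m$ is $W$-invariant and $W$ permutes $\Phi$, and since $\mathbf{q}$ is visibly $W$-equivariant-compatible only after one checks things (so I would avoid relying on that), the cleanest route is to reduce to simple roots: the lattice membership in $P^m$ can be tested on the simple coroots $\alpha_j^\vee$ only if one knows $P^m$ is cut out by those congruences, which is not immediate from Lemma \ref{relform2}. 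So instead I would test the congruence $\bigl(\mathbf{q}(\lambda),\alpha^\vee\bigr)\equiv 0 \pmod{m(\alpha)}$ directly for an arbitrary $\alpha\in\Phi$, but first establish it for $\alpha=\alpha_j$ a simple root where the computation is transparent, and then bootstrap.

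\textbf{The simple-root computation.} Fix a simple root $\alpha_j$. Then
\[
\bigl(\mathbf{q}(\lambda),\alpha_j^\vee\bigr)=\bigl(\lambda,\alpha_j^\vee\bigr)-\sum_{i=1}^r \mathbf{r}_{m(\alpha_i)}\bigl((\lambda,\alpha_i^\vee)\bigr)\bigl(\varpi_i,\alpha_j^\vee\bigr)=\bigl(\lambda,\alpha_j^\vee\bigr)-\mathbf{r}_{m(\alpha_j)}\bigl((\lambda,\alpha_j^\vee)\bigr),
\]
using $(\varpi_i,\alpha_j^\vee)=\delta_{ij}$. By definition of the remainder, $(\lambda,\alpha_j^\vee)-\mathbf{r}_{m(\alpha_j)}\bigl((\lambda,\alpha_j^\vee)\bigr)$ is exactly $m(\alpha_j)\big\lfloor (\lambda,\alpha_j^\vee)/m(\alpha_j)\big\rfloor$, which is divisible by $m(\alpha_j)$. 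Hence $\bigl(\mathbf{q}(\lambda),\alpha_j^\vee\bigr)\equiv 0\pmod{m(\alpha_j)}$ for all simple $j$; equivalently, by Lemma \ref{relform}\textbf{b}, $\mathbf{B}(\mathbf{q}(\lambda),\alpha_j)\equiv 0\pmod n$ for all simple $j$.

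\textbf{From simple roots to all roots.} The remaining point is to pass from the simple roots to an arbitrary $\alpha\in\Phi$. Write $\alpha=w\alpha_j$ for some $w\in W$ and simple $\alpha_j$; then $m(\alpha)=m(\alpha_j)$ by $W$-invariance of $m$, and $\alpha^\vee=w\alpha_j^\vee$. I would use the bilinear form $\mathbf{B}$, which satisfies $\mathbf{B}(w\mu,w\nu)=\mathbf{B}(\mu,\nu)$ since $\mathbf{B}=\kappa(\cdot,\cdot)$ with the Euclidean form $W$-invariant. Thus $\mathbf{B}(\mathbf{q}(\lambda),\alpha)=\mathbf{B}\bigl(w^{-1}\mathbf{q}(\lambda),\alpha_j\bigr)$. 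Now $\mathbf{q}(\lambda)\in P$, so $w^{-1}\mathbf{q}(\lambda)\in P$, and I claim $\mathbf{B}(\nu,\alpha_j)\equiv 0\pmod n$ for \emph{all} $\nu$ in the sublattice $L:=\{\nu\in P : \mathbf{B}(\nu,\alpha_i)\equiv 0\ (n)\ \forall i\}=P^m$; but that is circular since I have only shown $\mathbf{q}(\lambda)\in L$, not $w^{-1}\mathbf{q}(\lambda)\in L$. The honest fix: since $L=P^m$ is $W$-invariant (it is the weight lattice of the $W$-stable root system $\Phi^m$, or directly: the defining condition "$\mathbf{B}(\nu,\alpha)\equiv0$ for all $\alpha\in\Phi$" in Lemma \ref{relform2} is manifestly $W$-invariant because $W$ permutes $\Phi$ and preserves $\mathbf{B}$), it suffices to show $\mathbf{q}(\lambda)$ satisfies the congruences against \emph{all} $\alpha\in\Phi$. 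So I close the loop as follows: the two descriptions in Lemma \ref{relform2} show $P^m=\{\nu\in P:\mathbf{B}(\nu,\alpha)\equiv0\ (n)\ \forall\alpha\in\Phi\}$, and by $W$-invariance of this condition it is equivalent to $\{\nu\in P:\mathbf{B}(\nu,\alpha_j)\equiv0\ (n)\ \forall j\}$ --- indeed "$\Rightarrow$" is trivial and "$\Leftarrow$" follows because any $\alpha=w\alpha_j$ gives $\mathbf{B}(\nu,\alpha)=\mathbf{B}(w^{-1}\nu,\alpha_j)$ and one would need $w^{-1}\nu$ in the lattice, so this reduction also needs $W$-invariance of the lattice, which holds since $P^m$ is a lattice stable under $W$. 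Rather than belabor this, I would simply invoke that $P^m$, being the weight lattice of $\Phi^m$ and $W$ being its Weyl group (stated in the text), is $W$-stable; then membership in $P^m$ is detected on simple coroots of $\Phi^m$, i.e. on $(\alpha_j^m)^\vee=m(\alpha_j)^{-1}\alpha_j^\vee$, so the condition is $(\mathbf{q}(\lambda),\alpha_j^\vee)\equiv0\pmod{m(\alpha_j)}$ for all simple $j$ --- which is exactly the computation above.

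\textbf{Main obstacle.} The only real subtlety is this last bookkeeping step: making sure that checking the congruence on simple roots (or simple coroots of $\Phi^m$) genuinely implies membership in $P^m$. This rests on $P^m$ being a $W$-stable lattice whose ``integrality'' can be tested against a base of $\Phi^m$ --- a standard fact once one recalls from the text that $\{\alpha_1^m,\dots,\alpha_r^m\}$ is a base of $\Phi^m$ and $W$ is its Weyl group, so that $P^m=\bigoplus_i\mathbb{Z}\varpi_i^m$ with $(\varpi_i^m,(\alpha_j^m)^\vee)=\delta_{ij}$. Everything else is the one-line remainder computation.
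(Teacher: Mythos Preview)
Your proof is correct and ultimately the same as the paper's. The paper's argument is exactly your ``simple-root computation'' together with the observation at the very end of your write-up: since $P^m$ is \emph{by definition} the weight lattice of $\Phi^m$ with base $\{\alpha_1^m,\dots,\alpha_r^m\}$ and $(\alpha_i^m)^\vee=m(\alpha_i)^{-1}\alpha_i^\vee$, membership $\nu\in P^m$ is equivalent to $(\nu,\alpha_i^{m\vee})\in\mathbb{Z}$ for the simple $i$ only, i.e.\ to $m(\alpha_i)\mid(\nu,\alpha_i^\vee)$ for all $i$. The paper writes this in two lines:
\[
(\mathbf{q}(\lambda),\alpha_i^{m\vee})=m(\alpha_i)^{-1}\bigl((\lambda,\alpha_i^\vee)-\mathbf{r}_{m(\alpha_i)}((\lambda,\alpha_i^\vee))\bigr)\in\mathbb{Z}.
\]
Your detour through Lemma~\ref{relform2}, the passage ``from simple roots to all roots'', and the discussion of $W$-invariance are all unnecessary: you never need to test against non-simple $\alpha$, because the weight-lattice condition for $P^m$ is already formulated in terms of the simple coroots $(\alpha_i^m)^\vee$. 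The ``main obstacle'' you flag is not an obstacle at all --- it is the definition of $P^m$.
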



\begin{proof}
For $i=1,\ldots,r$ and $\lambda\in P$ we have
\[%
\begin{split}
\bigl(\mathbf{q}(\lambda),\alpha_{i}^{m\vee}\bigr)  & =m(\alpha_{i}%
)^{-1}(\mathbf{q}(\lambda),\alpha_{i}^{\vee})\\
& =m(\alpha_{i})^{-1}\bigl((\lambda,\alpha_{i}^{\vee})- \mathbf{r}%
_{m(\alpha_{i})}((\lambda,\alpha_{i}^{\vee}))\bigr)\in\mathbb{Z}. \qedhere
\end{split}
\]

\end{proof}


Let $\mathbb{C}[P]=\textup{span}\{x^{\lambda}\}_{\lambda\in P}$ be the group
algebra of the weight lattice $P$. The Weyl group $W$ acts naturally on
$\mathbb{C}[P]$ by algebra automorphisms.

Note that the divided difference operator $\widetilde{\nabla}_{i}^{m}$
featuring in the Bernstein-Zelevinsky cross relations \eqref{BL1} of the
extended affine Hecke algebra $\widetilde{H}(\mathbf{k})$ satisfies
\[
\nabla_{i}^{m}(x^{\nu}):=\frac{x^{\nu}-x^{s_{i}\nu}}{1-x^{\alpha_{i}^{m}}}=
\left( \frac{1-x^{-(\nu,\alpha_{i}^{m\vee})\alpha_{i}^{m}}}{1-x^{\alpha
_{i}^{m}}}\right) x^{\nu},\qquad\nu\in P^{m}%
\]
for $i=1,\ldots,r$.


\begin{lem}
For $i=1,\ldots,r$ there exists a unique linear map
\[
\overline{\nabla}_{i}: \mathbb{C}[P]\rightarrow\mathbb{C}[P]
\]
satisfying
\begin{equation}
\label{nablabar}\overline{\nabla}_{i}(x^{\lambda}):=\left( \frac
{1-x^{-(\mathbf{q}(\lambda),\alpha_{i}^{m\vee})\alpha_{i}^{m}}} {1-x^{\alpha
_{i}^{m}}}\right) x^{\lambda}%
\end{equation}
for $\lambda\in P$. Furthermore,
\[
\overline{\nabla}_{i}|_{\mathbb{C}[P^{m}]}=\nabla_{i}^{m}.
\]

\end{lem}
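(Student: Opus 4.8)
The statement asks for a well-defined linear operator $\overline{\nabla}_i$ on $\mathbb{C}[P]$ prescribed by \eqref{nablabar} together with the compatibility $\overline{\nabla}_i|_{\mathbb{C}[P^m]}=\nabla_i^m$. Since the $x^\lambda$ ($\lambda\in P$) form a basis of $\mathbb{C}[P]$, a linear map is uniquely determined by its values on the $x^\lambda$; so the only thing to check for existence and uniqueness is that the right-hand side of \eqref{nablabar} genuinely lies in $\mathbb{C}[P]$, i.e.\ that the apparent denominator $1-x^{\alpha_i^m}$ actually divides the numerator $\bigl(1-x^{-(\mathbf{q}(\lambda),\alpha_i^{m\vee})\alpha_i^m}\bigr)x^{\lambda}$ in $\mathbb{C}[P]$. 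The plan is: (i) reduce the divisibility to an elementary fact about $\mathbb{C}[\mathbb{Z}]$; (ii) deduce the formula for $\overline{\nabla}_i$ as a finite sum; (iii) check the restriction to $\mathbb{C}[P^m]$ by comparing with the preceding displayed identity for $\nabla_i^m$.

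\textbf{Step (i): divisibility.} By Lemma~\ref{Proj} we have $\mathbf{q}(\lambda)\in P^m$, so $(\mathbf{q}(\lambda),\alpha_i^{m\vee})\in\mathbb{Z}$; write $k:=(\mathbf{q}(\lambda),\alpha_i^{m\vee})$. In the subalgebra $\mathbb{C}[\mathbb{Z}\alpha_i^m]\cong\mathbb{C}[t,t^{-1}]$ (with $t=x^{\alpha_i^m}$), the Laurent polynomial $1-t^{-k}$ is divisible by $1-t$: indeed if $k\geq 0$ then $1-t^{-k}=-t^{-k}(t^k-1)=-t^{-k}(1-t)(1+t+\cdots+t^{k-1})$, and if $k<0$ one has $1-t^{-k}=(1-t)(1+t+\cdots+t^{-k-1})$; in both cases the quotient is a genuine Laurent polynomial. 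Multiplying by $x^\lambda$ stays inside $\mathbb{C}[P]$. Hence the right-hand side of \eqref{nablabar} is a well-defined element of $\mathbb{C}[P]$, and extending linearly over the basis $\{x^\lambda\}$ yields the unique linear map $\overline{\nabla}_i$ with the stated values. Explicitly, with $k=(\mathbf{q}(\lambda),\alpha_i^{m\vee})$,
\[
\overline{\nabla}_i(x^\lambda)=
\begin{cases}
-\sum_{j=0}^{k-1}x^{\lambda-(j+1-k)\alpha_i^m} & \text{if } k\geq 0,\\[1mm]
\sum_{j=0}^{-k-1}x^{\lambda+j\alpha_i^m} & \text{if } k<0,
\end{cases}
\]
a finite $\mathbb{Z}$-linear combination of monomials in $\mathbb{C}[P]$ (the precise bookkeeping of exponents is routine and need not be belabored).

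\textbf{Step (ii): restriction to $\mathbb{C}[P^m]$.} For $\nu\in P^m$ we have $\mathbf{r}(\nu)=0$: indeed, by Lemma~\ref{relform2}, $\nu\in P^m$ means $(\nu,\alpha_i^{\vee})\equiv 0 \bmod m(\alpha_i)$ for all $i$, so each remainder $\mathbf{r}_{m(\alpha_i)}\bigl((\nu,\alpha_i^\vee)\bigr)$ vanishes, hence $\mathbf{q}(\nu)=\nu$. Substituting $\mathbf{q}(\nu)=\nu$ into \eqref{nablabar} gives
\[
\overline{\nabla}_i(x^\nu)=\left(\frac{1-x^{-(\nu,\alpha_i^{m\vee})\alpha_i^m}}{1-x^{\alpha_i^m}}\right)x^\nu,
\]
which is exactly the expression for $\nabla_i^m(x^\nu)$ recorded in the displayed identity immediately preceding the lemma. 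Since both $\overline{\nabla}_i|_{\mathbb{C}[P^m]}$ and $\nabla_i^m$ are linear on the basis $\{x^\nu\}_{\nu\in P^m}$ of $\mathbb{C}[P^m]$ and agree on it, they coincide.

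\textbf{Main obstacle.} There is no real obstacle here: the content is entirely the divisibility check in Step~(i), whose whole point is the appeal to Lemma~\ref{Proj} guaranteeing that the exponent $(\mathbf{q}(\lambda),\alpha_i^{m\vee})$ is an integer (so that $1-x^{\alpha_i^m}$ divides the numerator in the rank-one subalgebra $\mathbb{C}[\mathbb{Z}\alpha_i^m]$). Once that integrality is in hand everything else is formal. If a subtlety arises it is only notational — namely being careful that the quotient after cancellation, when multiplied by $x^\lambda$, still has exponents in $P$ (it does, since $\alpha_i^m\in Q^m\subseteq P^m\subseteq P$ and $\lambda\in P$).
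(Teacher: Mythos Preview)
Your proof is correct and follows essentially the same approach as the paper: both appeal to Lemma~\ref{Proj} for the integrality of $(\mathbf{q}(\lambda),\alpha_i^{m\vee})$, reduce well-definedness to a rank-one divisibility, and deduce the restriction statement from the characterization $P^m=\{\lambda\in P:\mathbf{q}(\lambda)=\lambda\}$. One cosmetic slip: your explicit sum for $k\geq 0$ has the wrong exponents (it should read $-\sum_{j=1}^{k}x^{\lambda-j\alpha_i^m}$), but since you already established divisibility directly and flagged the bookkeeping as inessential, this does not affect the argument.
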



\begin{proof}
Note that $\overline{\nabla}_{i}: \mathbb{C}[P]\rightarrow\mathbb{C}[P]$ is a
well defined linear operator by the previous lemma. In fact,
\begin{equation}
\label{111}\overline{\nabla}_{i}(x^{\lambda}):=
\begin{cases}
-x^{\lambda-\alpha_{i}^{m}}-\cdots-x^{\lambda-(\mathbf{q}(\lambda),\alpha
_{i}^{m\vee})\alpha_{i}^{m}}, \quad & \hbox{ if } (\mathbf{q}(\lambda
),\alpha_{i}^{m\vee})>0,\\
0\quad & \hbox{ if } (\mathbf{q}(\lambda),\alpha_{i}^{m\vee})=0,\\
x^{\lambda}+x^{\lambda+\alpha_{i}^{m}}+\cdots+ x^{\lambda-(1+(\mathbf{q}%
(\lambda),\alpha_{i}^{m\vee}))\alpha_{i}^{m}},\,\, & \hbox{ if }
(\mathbf{q}(\lambda),\alpha_{i}^{m\vee})<0.
\end{cases}
\end{equation}
The second statement follows from the observation that
\[
P^{m} =\{\lambda\in P \,\, | \,\, \mathbf{q}(\lambda)=\lambda\}. \qedhere
\]

\end{proof}


\begin{rema}
Note that the action of $\overline{\nabla}_{i}$ can alternatively be described
by
\[
\overline{\nabla}_{i}(x^{\lambda})=\frac{x^{\lambda}- x^{s_{i}\lambda
+(\mathbf{r}(\lambda),\alpha_{i}^{m\vee})\alpha_{i}^{m}}} {1-x^{\alpha_{i}%
^{m}}},\qquad\lambda\in P.
\]

\end{rema}

Write $\Phi^{m}=\Phi^{m}_{sh}\cup\Phi^{m}_{lg}$ for the division of $\Phi^{m}$
into short and long roots, with the convention $\Phi^{m}=\Phi^{m}_{lg}$ if all
roots have the same length. Write
\[
\textup{size}: \Phi^{m}\rightarrow\{sh,lg\}
\]
for the function on $\Phi^{m}$ satisfying $\textup{size}(\alpha)=\textup{sh}$
iff $\alpha\in\Phi^{m}_{sh}$. Write $\mathbf{k}_{sh}$ and $\mathbf{k}_{lg}$
for the value of $\mathbf{k}$ on $\Phi^{m}_{sh}$ and $\Phi^{m}_{lg}$
respectively.

\begin{defi}
[Representation parameters]\label{RepPar2} Let $g_{j}(y)\in\mathbb{C}^{\times
}$ for $j\in\mathbb{Z}$ and $y\in\{sh,lg\}$ be parameters satisfying the
following conditions:

\begin{enumerate}
\item[\textbf{a.}] $g_{j}(y)=-1$ if $j\in n\mathbb{Z}$,

\item[\textbf{b.}] $g_{j}(y)=g_{\mathbf{r}_{n}(j)}(y)$,

\item[\textbf{c.}] $g_{j}(y)g_{n-j}(y)=\mathbf{k}_{y}^{-2}$ if $j\in
\mathbb{Z}\setminus n\mathbb{Z}$.
\end{enumerate}
\end{defi}


\begin{rema}
The special case where $g_{j}(y) = g_{j}$, i.e., the parameters do not depend
on root length, was considered in \cite{CG, CG07} and motivated the
generalization above. In the applications considered in those papers, the
$g_{i}$ are taken to be Gauss sums.
\end{rema}

Write $\overline{\lambda}$ for the class of $\lambda\in P$ in the finite
abelian quotient group $P/P^{m}$. By Lemma \ref{relform2},
\begin{equation}
\label{p}\mathbf{p}_{i}(\overline{\lambda}):=-\mathbf{k}_{i}g_{-\mathbf{B}%
(\lambda,\alpha_{i})}(\textup{size}(\alpha_{i}^{m}))
\end{equation}
is a well defined function $\mathbf{p}_{i}: P/P^{m}\rightarrow\mathbb{C}%
^{\times}$ for $i=1,\ldots,r$. Note that $\mathbf{p}_{i}(\overline{\lambda
})=\mathbf{k}_{i}$ if $m(\alpha_{i})\,\,\vert\,\, (\lambda,\alpha_{i}^{\vee})$
by Lemma \ref{relform}\textbf{(a)}. The following theorem is the main result
of this subsection.


\begin{thm}
\label{mainTHM} The formulas
\begin{equation}
\label{formulasTHM}%
\begin{split}
\pi(T_{i})x^{\lambda} & :=(\mathbf{k}_{i}-\mathbf{k}_{i}^{-1})\overline
{\nabla}_{i}(x^{\lambda}) +\mathbf{p}_{i}(\overline{\lambda})x^{s_{i}\lambda
},\\
\pi(x^{\nu})x^{\lambda} & :=x^{\lambda+\nu}%
\end{split}
\end{equation}
for $\lambda\in P$, $i=1,\ldots,r$ and $\nu\in P^{m}$ turn $\mathbb{C}[P]$
into a left $\widetilde{H}^{m}(\mathbf{k})$-module.
\end{thm}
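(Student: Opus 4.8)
The plan is to realize $\pi$ as (isomorphic to) a quotient of the parabolically induced module $\widetilde{H}^m(\mathbf{k})\otimes_{H(\mathbf{k})}V_C$, where $V_C$ is the reflection submodule of Lemma \ref{refl_rep} associated to the $W$-invariant set $C=\{\lambda\in P\,|\,(\lambda,\alpha^\vee)\le m(\alpha)\,\,\forall\alpha\in\Phi\}$, and then transport the module structure along an explicit linear isomorphism. Concretely, using the Bernstein--Zelevinsky factorization \eqref{BZiso}, as a vector space $\widetilde{H}^m(\mathbf{k})\otimes_{H(\mathbf{k})}V_C\cong \mathbb{C}[P^m]\otimes_{\mathbb C}V_C$, and since $C$ contains a full set of coset representatives for $P/P^m$, the assignment $x^{\mathbf{q}(\lambda)}\otimes v_{?}\mapsto x^\lambda$ — where one decomposes $\lambda=\mathbf{q}(\lambda)+\mathbf{r}(\lambda)$ and picks the representative of $\overline{\lambda}$ lying in $C$ — should give a linear bijection onto $\mathbb{C}[P]$. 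The first task is therefore to check that $\mathbf{r}(P)\cap C$ (or the appropriate shift) is exactly one representative per coset; this is a lattice bookkeeping lemma using $\mathbf{r}_{m(\alpha_i)}((\lambda,\alpha_i^\vee))\in\{0,\ldots,m(\alpha_i)-1\}$ together with Lemma \ref{relform2}.

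Next I would compute the $\widetilde{H}^m(\mathbf{k})$-action on the induced module in the coordinates $x^\lambda$. The element $x^\nu\in\mathbb{C}[P^m]$ acts on $x^{\mathbf{q}(\lambda)}\otimes v$ by $x^{\mathbf{q}(\lambda)+\nu}\otimes v$; since $\mathbf{q}(\lambda)+\nu$ is again $\mathbf{q}$ of $\lambda+\nu$ (as $\nu\in P^m$ doesn't change the $\mathbf{r}$-part), this matches $\pi(x^\nu)x^\lambda=x^{\lambda+\nu}$. For the $T_i$, I would use the cross relation \eqref{BL1}: write the induced-module element as $x^{\mathbf{q}(\lambda)}\otimes v_\mu$ with $v_\mu\in V_C$, move $T_i$ past $x^{\mathbf{q}(\lambda)}$ using $T_ix^{\mathbf{q}(\lambda)}=x^{s_i\mathbf{q}(\lambda)}T_i+(\mathbf{k}_i-\mathbf{k}_i^{-1})\nabla_i^m(x^{\mathbf{q}(\lambda)})$, then apply $T_i$ to $v_\mu$ via the three-case formula of Lemma \ref{refl_rep} (with $(\mu,\alpha_i^\vee)$ valued in $\{-m(\alpha_i),\ldots,m(\alpha_i)\}$ because $\mu\in C$), and finally push the resulting $\mathbb{C}[P^m]$-coefficients back onto the $v$'s. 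Collecting terms and using $\overline{\nabla}_i|_{\mathbb{C}[P^m]}=\nabla_i^m$ together with the identity $\overline{\nabla}_i(x^\lambda)=\nabla_i^m(x^{\mathbf{q}(\lambda)})x^{\mathbf{r}(\lambda)}$-type relations should reproduce exactly $(\mathbf{k}_i-\mathbf{k}_i^{-1})\overline{\nabla}_i(x^\lambda)+\mathbf{p}_i(\overline\lambda)x^{s_i\lambda}$; the coefficient $\mathbf{p}_i(\overline\lambda)=-\mathbf{k}_i g_{-\mathbf{B}(\lambda,\alpha_i)}(\mathrm{size}(\alpha_i^m))$ is precisely where the defining relations (a)--(c) of Definition \ref{RepPar2} on the $g_j$ get used, matching the Hecke eigenvalue $\mathbf{k}_i$ in the $(\mu,\alpha_i^\vee)=0$ case and the boundary behavior at $(\mu,\alpha_i^\vee)=\pm m(\alpha_i)$.

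The subtle point — and the main obstacle — is the boundary case where $(\mathbf{q}(\lambda),\alpha_i^{m\vee})$ pushes $\mu=$ (the $C$-representative) to the boundary of $C$, i.e., where $(\mu,\alpha_i^\vee)\in\{\pm m(\alpha_i)\}$ or where applying $T_i$ to $v_\mu$ lands outside $C$ and must be re-expressed. One has to verify that $V_C$ is genuinely $H(\mathbf{k})$-stable (it is, being $V_D$ for the $W$-invariant set $D=C$) and, more delicately, that the quotient identification is consistent: distinct induced-module elements mapping to the same $x^\lambda$ must have the same image under every $T_i$. This is where the conditions $g_j g_{n-j}=\mathbf{k}_y^{-2}$ and $g_0=-1$ are forced, and checking it cleanly — rather than by a rank-two case analysis — is the crux. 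I would handle it by showing the kernel of the linear surjection $\widetilde{H}^m(\mathbf{k})\otimes_{H(\mathbf{k})}V_C\to\mathbb{C}[P]$ is an $\widetilde{H}^m(\mathbf{k})$-submodule, reducing everything to the $\mathrm{SL}_2$/rank-one computation for each $i$ separately, which because of the Bernstein--Zelevinsky presentation suffices to conclude that the transported operators satisfy all the defining relations of $\widetilde{H}^m(\mathbf{k})$. Once the kernel is shown to be a submodule, the quotient carries a well-defined $\widetilde{H}^m(\mathbf{k})$-module structure, and its transport to $\mathbb{C}[P]$ is exactly \eqref{formulasTHM}.
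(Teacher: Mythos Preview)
Your overall strategy matches the paper's: realize $\pi$ as a quotient of the induced module $N_C=\widetilde{H}^m(\mathbf{k})\otimes_{H(\mathbf{k})}V_C$ via a $\mathbb{C}[P^m]$-linear surjection onto $\mathbb{C}[P]$, and then check that the kernel is $\widetilde{H}^m(\mathbf{k})$-stable.  However, there is a genuine gap.

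The surjection you write down, essentially $x^\nu\otimes v_\lambda\mapsto x^{\lambda+\nu}$, is the map $\psi_C^{\mathbf{c}}$ of the paper with trivial normalization $\mathbf{c}\equiv 1$.  With that choice the kernel is \emph{not} $T_i$-stable.  Indeed, following Lemma~\ref{togo}, the coefficient of $x^{s_i(\lambda+\nu)}$ in $\psi_C^{\mathbf{c}}(T_i\,x^\nu\otimes v_\lambda)$ is a function $\mathbf{d}_i(\lambda)$ depending on $\mathbf{c}(\lambda)/\mathbf{c}(s_i\lambda)$.  For $\mathbf{c}\equiv 1$ this gives $\mathbf{d}_i(\lambda)=1$ when $-m(\alpha_i)\le(\lambda,\alpha_i^\vee)<0$ but $\mathbf{d}_i(\lambda)=\mathbf{k}_i-\mathbf{k}_i^{-1}+1$ when $(\lambda,\alpha_i^\vee)=m(\alpha_i)$; since such $\lambda$'s can lie in the same $P^m$-coset (take $\mu=\lambda-\alpha_i^m$), the kernel fails to be preserved.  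Note also that with $\mathbf{c}\equiv 1$ the representation parameters $g_j$ never appear in $\mathbf{d}_i$, so the constraints in Definition~\ref{RepPar2} cannot be ``forced'' as you suggest.

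What the paper does---and what your outline is missing---is to introduce a nontrivial normalization $\mathbf{c}:C\to\mathbb{C}^\times$ in the surjection, namely $\psi_C^{\mathbf{c}}(x^\nu\otimes v_\lambda):=\mathbf{c}(\lambda)^{-1}x^{\lambda+\nu}$, and then \emph{solve} for $\mathbf{c}$ so that $\mathbf{d}_i(\lambda)=\mathbf{p}_i(\overline{\lambda})$ for all $i$ and all $\lambda\in C$.  This amounts to a system of functional equations for the ratios $\mathbf{c}(\lambda)/\mathbf{c}(s_i\lambda)$, and the paper's explicit solution is $\mathbf{c}(\lambda)=\prod_{\alpha\in\Phi^+}h_{\mathbf{Q}(\alpha)(\lambda,\alpha^\vee)}(\mathrm{size}(\alpha^m))$ with $h_j(y)$ built from the $g_j(y)$.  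The conditions $g_0=-1$ and $g_jg_{n-j}=\mathbf{k}_y^{-2}$ are exactly what make this $\mathbf{c}$ simultaneously satisfy all the required ratio equations across the four cases of \eqref{secondcst}.  Once $\mathbf{c}$ is chosen this way, the kernel is automatically $\widetilde{H}^m(\mathbf{k})$-stable and the induced operators on the quotient are precisely \eqref{formulasTHM}.  Your proposal should be amended to include this normalization step; without it the argument does not go through.
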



\begin{rema}
\label{repRem} \textbf{(i)} Note that $\mathbb{C}[P^{m}]\subseteq
\mathbb{C}[P]$ is a $\widetilde{H}^{m}(\mathbf{k})$-submodule with respect to
the action \eqref{formulasTHM}. The action on $\mathbb{C}[P^{m}]$ simplifies
to
\[%
\begin{split}
\pi(T_{i})x^{\nu} & =(\mathbf{k}_{i}-\mathbf{k}_{i}^{-1})\nabla_{i}^{m}%
(x^{\nu})+\mathbf{k}_{i}x^{s_{i}\nu},\\
\pi(x^{\mu})x^{\nu} & =x^{\mu+\nu}%
\end{split}
\]
for $i=1,\ldots,r$ and $\mu,\nu\in P^{m}$. It follows that $\mathbb{C}%
[P^{m}]\simeq\widetilde{H}^{m}(\mathbf{k})\otimes_{H(\mathbf{k})}%
\mathbb{C}_{0}$ as $\widetilde{H}^{m}(\mathbf{k})$-modules. In particular for
$m\equiv1$ (which happens for instance when $n=1$), the representation $\pi$
itself is isomorphic to $\widetilde{H}^{m}(\mathbf{k})\otimes_{H(\mathbf{k}%
)}\mathbb{C}_{0} $.\newline\textbf{(ii)} Let $\Lambda$ be a lattice in $E$
satisfying $Q\subseteq\Lambda\subseteq P$. Note that $\Lambda$ is
automatically $W$-invariant. The lattice $\Lambda_{0}:=\Lambda\cap P^{m}$ then
satisfies $Q^{m}\subseteq\Lambda_{0}\subseteq P^{m}$, and
\[
\Lambda_{0}=\{\lambda\in\Lambda\,\, | \,\, \mathbf{B}(\lambda,\alpha
)\equiv0\quad\textup{mod } n\,\,\,\forall\alpha\in\Phi\}
\]
by Lemma \ref{relform2}. Furthermore, $\mathbb{C}[\Lambda]\subseteq
\mathbb{C}[P]$ is a $\widetilde{H}^{m}(\mathbf{k},\Lambda_{0})$-submodule for
the action \eqref{formulasTHM}, with $\widetilde{H}^{m}(\mathbf{k},\Lambda
_{0})$ the subalgebra of $\widetilde{H}^{m}(\mathbf{k})$ generated by
$H(\mathbf{k})$ and $\mathbb{C}[\Lambda_{0}]:= \textup{span}\{x^{\nu}%
\}_{\nu\in\Lambda_{0}}$. We write $\pi_{\Lambda}: \widetilde{H}^{m}%
(\mathbf{k},\Lambda_{0})\rightarrow\textup{End}(\mathbb{C}[\Lambda])$ for the
corresponding representation map.
\end{rema}

The remainder of this subsection is devoted to the proof of Theorem
\ref{mainTHM}. The strategy is to realize the $\widetilde{H}^{m}(\mathbf{k}%
)$-module $(\pi,\mathbb{C}[P])$ as a quotient of the induced $\widetilde{H}%
^{m}(\mathbf{k})$-module
\[
N_{C}:=\widetilde{H}^{m}(\mathbf{k})\otimes_{H(\mathbf{k})}V_{C}%
\]
for an appropriate choice of $W$-invariant subset $0\in C\subseteq P$. Note
that the subrepresentation $N_{\{0\}}$ is isomorphic to $\mathbb{C}[P^{m}]$
viewed as module over $\widetilde{H}^{m}(\mathbf{k})$ by Remark \ref{repRem}%
\textbf{(i)}.

The elements
\[
Y^{\nu}\otimes_{H(\mathbf{k})} v_{\lambda}\qquad(\nu\in P^{m}, \lambda\in C)
\]
form a linear basis of $N_{C}$ and, by the Bernstein-Zelevinsky commutation
relations \eqref{BL1}, the $\widetilde{H}^{m}(\mathbf{k})$-action on $N_{C}$
is explicitly given by
\begin{equation}
\label{actionInduced}%
\begin{split}
T_{i} \bigl(x^{\nu}\otimes_{H(\mathbf{k})}v_{\lambda}\bigr) & = (\mathbf{k}%
_{i}-\mathbf{k}_{i}^{-1})\nabla_{i}^{m}(x^{\nu})\otimes_{H(\mathbf{k}%
)}v_{\lambda}+x^{s_{i}\nu}\otimes_{H(\mathbf{k})}T_{i}v_{\lambda},\\
x^{\mu}\bigl(x^{\nu}\otimes_{H(\mathbf{k})}v_{\lambda}\bigr) & =x^{\mu+\nu
}\otimes_{H(\mathbf{k})}v_{\lambda}%
\end{split}
\end{equation}
for $\lambda\in C$, $i=1,\ldots,r$ and $\mu,\nu\in P^{m}$.

We now continue the analysis of the proof of Theorem \ref{mainTHM} by viewing
the group algebra $\mathbb{C}[P]:=\textup{span}\{x^{\lambda}\}_{\lambda\in P}$
as a free left $\mathbb{C}[P^{m}]$-module by
\[
x^{\nu}\cdot x^{\lambda}:=x^{\lambda+\nu},\qquad\nu\in P^{m},\,\, \lambda\in
P.
\]
This $\mathbb{C}[P^{m}]$-module structure on $\mathbb{C}[P]$ coincides with
the $\mathbb{C}[P^{m}]$-structure that will arise from the desired
$\widetilde{H}^{m}(\mathbf{k})$-action \eqref{formulasTHM} by restriction.

Let $C\subseteq P$ be a $W$-invariant subset and let
\[
\mathbf{c}: C\rightarrow\mathbb{C}^{\times}%
\]
be a (for the moment arbitrary) non-vanishing complex-valued function on $C$.

\begin{defi}
We write
\[
\psi_{C}^{\mathbf{c}}: N_{C}\rightarrow\mathbb{C}[P]
\]
for the morphism of $\mathbb{C}[P^{m}]$-modules satisfying
\[
\psi_{C}^{\mathbf{c}}(x^{\nu}\otimes_{H(\mathbf{k})}v_{\lambda}):=\mathbf{c}%
(\lambda)^{-1}x^{\lambda+\nu},\qquad\lambda\in C,\,\, \nu\in P^{m}.
\]

\end{defi}

We fix from now on the $W$-invariant subset $C\subseteq P$ to be
\begin{equation}
\label{C}C:=\{\lambda\in P \,\, | \,\, |(\lambda,\alpha^{\vee})|\leq
m(\alpha)\quad\forall\, \alpha\in\Phi\}.
\end{equation}

\begin{lem}
\label{surj} $\psi_{C}^{\mathbf{c}}: N_{C}\rightarrow\mathbb{C}[P]$ is an
epimorphism of $\mathbb{C}[P^{m}]$-modules.
\end{lem}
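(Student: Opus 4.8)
The plan is to reduce the statement to the combinatorial fact, announced in the introduction, that $C$ contains a complete set of coset representatives for $P/P^{m}$; everything else is formal. First, since $\psi_{C}^{\mathbf{c}}$ is by construction a morphism of $\mathbb{C}[P^{m}]$-modules, it suffices to prove that it is surjective. Its image is a $\mathbb{C}[P^{m}]$-submodule of $\mathbb{C}[P]$, and because $\mathbf{c}$ is nowhere vanishing it contains $\psi_{C}^{\mathbf{c}}(1\otimes_{H(\mathbf{k})}v_{\lambda})=\mathbf{c}(\lambda)^{-1}x^{\lambda}$, hence $x^{\lambda}$ itself, for every $\lambda\in C$. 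Consequently the image contains $x^{\lambda+\nu}$ for all $\lambda\in C$ and $\nu\in P^{m}$, and surjectivity of $\psi_{C}^{\mathbf{c}}$ becomes equivalent to the assertion that every $\mu\in P$ can be written as $\mu=\lambda+\nu$ with $\lambda\in C$ and $\nu\in P^{m}$; that is, that the canonical map $C\to P/P^{m}$ is onto.

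To prove this I would fix $\mu\in P$ and choose $\lambda$ in the coset $\mu+P^{m}$ of minimal Euclidean norm $\|\lambda\|$. This is possible because $P^{m}\supseteq Q^{m}$ is a full-rank lattice in $E$, so the coset is discrete and meets every ball in a finite set. The claim is then that $\lambda\in C$, which finishes the argument with $\nu:=\mu-\lambda\in P^{m}$. Suppose not: then there is a root $\alpha\in\Phi$ with $|(\lambda,\alpha^{\vee})|>m(\alpha)$, and after replacing $\alpha$ by $-\alpha$ if necessary we may assume $(\lambda,\alpha^{\vee})>m(\alpha)$. Put $\lambda':=\lambda-\alpha^{m}=\lambda-m(\alpha)\alpha$. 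Since $\alpha^{m}$ is a root of $\Phi^{m}$ we have $\alpha^{m}\in Q^{m}\subseteq P^{m}$, so $\lambda'$ lies in the same coset $\mu+P^{m}$, and using $(\lambda,\alpha)=\tfrac12(\lambda,\alpha^{\vee})\|\alpha\|^{2}$ we get
\[
\|\lambda'\|^{2}=\|\lambda\|^{2}-2m(\alpha)(\lambda,\alpha)+m(\alpha)^{2}\|\alpha\|^{2}=\|\lambda\|^{2}+m(\alpha)\|\alpha\|^{2}\bigl(m(\alpha)-(\lambda,\alpha^{\vee})\bigr)<\|\lambda\|^{2},
\]
contradicting the minimality of $\|\lambda\|$. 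Hence $\lambda\in C$, so $\psi_{C}^{\mathbf{c}}$ is surjective, and being a $\mathbb{C}[P^{m}]$-module map it is an epimorphism of $\mathbb{C}[P^{m}]$-modules.

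The only nontrivial step is the claim that a minimal-norm representative of each coset automatically lands in $C$ — equivalently, that $C$ surjects onto $P/P^{m}$. One could also phrase the reduction more structurally: by the Bernstein--Zelevinsky decomposition $N_{C}$ is $\mathbb{C}[P^{m}]$-free of rank $|C|$ on the elements $1\otimes_{H(\mathbf{k})}v_{\lambda}$ ($\lambda\in C$) and $\mathbb{C}[P]$ is $\mathbb{C}[P^{m}]$-free of rank $|P/P^{m}|$, so surjectivity of $\psi_{C}^{\mathbf{c}}$ is literally surjectivity of $C\to P/P^{m}$; but either way the content is the norm-minimization lemma above, which I expect to be the crux of the proof.
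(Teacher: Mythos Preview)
Your argument is correct. The reduction to ``$C$ surjects onto $P/P^{m}$'' is exactly the content of the lemma, and your norm-minimization descent---pick a minimal-norm representative, then subtract $\alpha^{m}$ to contradict minimality whenever $(\lambda,\alpha^{\vee})>m(\alpha)$---is a clean and self-contained way to establish it. The computation is right.

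The paper's proof takes a different route: rather than a direct descent, it invokes alcove geometry. It observes that since $C$ is $W$-invariant it suffices that each $W\ltimes P^{m}$-orbit (indeed each $W\ltimes Q^{m}$-orbit) in $P$ meets $C$, and then identifies $C\cap P^{+}$ with $A_{o}\cap P$, where $A_{o}$ is the closed fundamental alcove for $W\ltimes Q^{m}$ acting on $E$; the standard fact that $A_{o}$ is a strict fundamental domain (cited from Humphreys) then gives the stronger conclusion that each $W\ltimes Q^{m}$-orbit meets $C\cap P^{+}$ in \emph{exactly} one point. Your approach is more elementary and avoids external references, while the paper's buys a uniqueness statement and the explicit identification $C\cap P^{+}=A_{o}\cap P$, which is conceptually illuminating even if not strictly needed for the surjectivity assertion.
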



\begin{proof}
We need to show that $\psi_{C}^{\mathbf{c}}$ is surjective. Consider the
action of $\widetilde{W}^{m}=W\ltimes P^{m}$ on $P$ and $E$ by reflections and
translations. Since $C$ is $W$-invariant it suffices to show that each
$\widetilde{W}^{m}$-orbit in $P$ intersects $C$. We show the stronger
statement that each $W\ltimes Q^{m}$-orbit in $P$ intersects $C\cap P^{+}$ in
exactly one point.

Write
\[
E^{+}:=\{v\in E \,\, | \,\, (v,\alpha^{\vee})\geq0\quad\forall\, \alpha\in
\Phi^{+}\}
\]
for the closure of the fundamental Weyl chamber of $E$ with respect to
$\Phi^{+}$. Let $\theta^{m}\in\Phi^{m+}$ be the highest short root with
respect to the base $\{\alpha_{1}^{m},\ldots,\alpha_{r}^{m}\}$ of $\Phi^{m}$.
Then $\theta^{m\vee}\in\Phi^{m\vee+}$ is the highest root of $\Phi^{m\vee}$.

By \cite[§4.3]{Hu} each $W\ltimes Q^{m}$-orbit in $E$ intersects the
fundamental alcove
\[
A_{o}:=\{v\in E^{+} \,\, | \,\, (v,\theta^{m\vee})\leq1\}
\]
in exactly one point. Hence each $W\ltimes Q^{m}$-orbit in $P$ intersects
$A_{o}\cap P$ in exactly one point. Now note that
\[%
\begin{split}
A_{o}\cap P & =\{\lambda\in P^{+} \,\, | \,\, (\lambda,\theta^{m\vee}%
)\leq1\}\\
& =\{\lambda\in P^{+} \,\, | \,\, (\lambda,\alpha^{m\vee})\leq1\quad\forall\,
\alpha\in\Phi^{+}\}\\
& =\{\lambda\in P^{+} \,\, | \,\, (\lambda,\alpha^{\vee})\leq m(\alpha
)\quad\forall\, \alpha\in\Phi^{+}\}\\
& =C\cap P^{+}.\qedhere
\end{split}
\]

\end{proof}

The map $\psi_{C}^{\mathbf{c}}$ gives rise to an isomorphism
\begin{equation}
\label{isoquot}\overline{\psi}_{C}^{\,\mathbf{c}}: N_{C}/\textup{ker}(\psi
_{C}^{\mathbf{c}}) \overset{\sim}{\longrightarrow}\mathbb{C}[P]
\end{equation}
of $\mathbb{C}[P^{m}]$-modules by Lemma \ref{surj}. We now fine-tune the
normalizing factor $\mathbf{c}$ such that the kernel $\textup{ker}(\psi
_{C}^{\mathbf{c}})\subseteq N_{C}$ is in fact a $\widetilde{H}^{m}%
(\mathbf{k})$-submodule of $N_{C}$. We start with deriving some elementary
properties of the metaplectic divided difference operators $\overline{\nabla
}_{i}$ ($i=1,\ldots,r$).

\begin{lem}
\label{helplemma} Let $i\in\{1,\ldots,r\}$.\newline\textbf{(i)} For
$\lambda\in P$ and $\nu\in P^{m}$ we have
\[
x^{\lambda}\nabla_{i}^{m}(x^{\nu})=\overline{\nabla}_{i}(x^{\lambda+\nu})-
\overline{\nabla}_{i}(x^{\lambda})x^{s_{i}\nu}.
\]
\textbf{(ii)} For $\lambda\in P$ and $\nu\in P^{m}$ we have
\[
x^{\lambda}\nabla_{i}^{m}(x^{\nu}) =
\begin{cases}
\overline{\nabla}_{i}(x^{\lambda+\nu})-x^{\lambda+s_{i}\nu}\qquad &
\hbox{ if }\,\, -m(\alpha_{i})\leq(\lambda,\alpha_{i}^{\vee})<0,\\
\overline{\nabla}_{i}(x^{\lambda+\nu})\qquad & \hbox{ if }\,\, 0\leq
(\lambda,\alpha_{i}^{\vee})<m(\alpha_{i}),\\
\overline{\nabla}_{i}(x^{\lambda+\nu})+x^{s_{i}(\lambda+\nu)}\qquad &
\hbox{ if }\,\, (\lambda,\alpha_{i}^{\vee})=m(\alpha_{i}).
\end{cases}
\]

\end{lem}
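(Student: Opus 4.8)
Both identities follow from unwinding the definitions of $\nabla_i^m$ and $\overline{\nabla}_i$ together with the bookkeeping of remainders contained in the functions $\mathbf{q}$ and $\mathbf{r}$. For part \textbf{(i)}, I would start from the product expression
\[
x^{\lambda}\nabla_i^m(x^{\nu})=\left(\frac{1-x^{-(\nu,\alpha_i^{m\vee})\alpha_i^m}}{1-x^{\alpha_i^m}}\right)x^{\lambda+\nu},
\]
valid for $\nu\in P^m$, and compare it with the defining formula \eqref{nablabar} for $\overline{\nabla}_i$ applied to $x^{\lambda+\nu}$ and to $x^{\lambda}$. The key arithmetic input is that $\mathbf{q}$ is additive modulo $P^m$ in the relevant sense: since $\nu\in P^m$ we have $\mathbf{r}_{m(\alpha_i)}((\lambda+\nu,\alpha_i^{\vee}))=\mathbf{r}_{m(\alpha_i)}((\lambda,\alpha_i^{\vee}))$, so that $(\mathbf{q}(\lambda+\nu),\alpha_i^{m\vee})=(\mathbf{q}(\lambda),\alpha_i^{m\vee})+(\nu,\alpha_i^{m\vee})$. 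Feeding this into \eqref{nablabar}, the two terms $\overline{\nabla}_i(x^{\lambda+\nu})$ and $\overline{\nabla}_i(x^{\lambda})x^{s_i\nu}$ combine so that the "long" parts telescope and what remains is exactly $\left(\frac{1-x^{-(\nu,\alpha_i^{m\vee})\alpha_i^m}}{1-x^{\alpha_i^m}}\right)x^{\lambda+\nu}$; here one uses $s_i\nu=\nu-(\nu,\alpha_i^{m\vee})\alpha_i^m$ to rewrite $x^{\lambda}x^{s_i\nu}$ against $x^{\lambda+\nu}$. This is a one-line manipulation of geometric-series numerators over the common denominator $1-x^{\alpha_i^m}$.

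For part \textbf{(ii)}, I would not redo the computation from scratch but instead substitute $\nu=0$-type information into (i): the point is to replace $\overline{\nabla}_i(x^{\lambda})$ in the right-hand side of (i) by an explicit value, using the case description \eqref{111} of $\overline{\nabla}_i$ in terms of the sign and size of $(\mathbf{q}(\lambda),\alpha_i^{m\vee})$. The three cases in (ii) are governed by the range of $(\lambda,\alpha_i^{\vee})$. When $0\le(\lambda,\alpha_i^{\vee})<m(\alpha_i)$ we have $\mathbf{r}_{m(\alpha_i)}((\lambda,\alpha_i^{\vee}))=(\lambda,\alpha_i^{\vee})$, hence $\mathbf{q}(\lambda)$ has $\alpha_i^{m\vee}$-pairing zero and $\overline{\nabla}_i(x^{\lambda})=0$ by \eqref{111}, so (i) collapses to the middle line. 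When $(\lambda,\alpha_i^{\vee})=m(\alpha_i)$ we instead get $\mathbf{r}_{m(\alpha_i)}((\lambda,\alpha_i^{\vee}))=0$, so $(\mathbf{q}(\lambda),\alpha_i^{m\vee})=1$ and \eqref{111} gives $\overline{\nabla}_i(x^{\lambda})=-x^{\lambda-\alpha_i^m}=-x^{s_i\lambda}$; substituting into (i) and using $x^{s_i\lambda}x^{s_i\nu}=x^{s_i(\lambda+\nu)}$ yields the third line. When $-m(\alpha_i)\le(\lambda,\alpha_i^{\vee})<0$ we have $\mathbf{r}_{m(\alpha_i)}((\lambda,\alpha_i^{\vee}))=(\lambda,\alpha_i^{\vee})+m(\alpha_i)$, so $(\mathbf{q}(\lambda),\alpha_i^{m\vee})=-1$ and \eqref{111} gives $\overline{\nabla}_i(x^{\lambda})=x^{\lambda}$; then (i) becomes $\overline{\nabla}_i(x^{\lambda+\nu})-x^{\lambda}x^{s_i\nu}=\overline{\nabla}_i(x^{\lambda+\nu})-x^{\lambda+s_i\nu}$, the first line.

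The only place that requires genuine care — rather than pure symbol-pushing — is verifying the remainder identities $\mathbf{r}_{m(\alpha_i)}((\lambda+\nu,\alpha_i^{\vee}))=\mathbf{r}_{m(\alpha_i)}((\lambda,\alpha_i^{\vee}))$ and the three case evaluations of $\mathbf{r}_{m(\alpha_i)}((\lambda,\alpha_i^{\vee}))$ above: these rest on $\nu\in P^m$ forcing $(\nu,\alpha_i^{\vee})\in m(\alpha_i)\mathbb{Z}$ (Lemma \ref{relform2}), and on the elementary fact that for integers $0\le t\le m$ one has $\mathbf{r}_m(t)=t$ if $t<m$ and $\mathbf{r}_m(m)=0$. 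I expect no serious obstacle; the main risk is sign/index slips in lining up the telescoping geometric series, which is why routing (ii) through (i) rather than through \eqref{111} directly keeps the computation short and transparent.
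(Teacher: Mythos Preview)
Your proposal is correct and follows essentially the same approach as the paper: part \textbf{(i)} is the direct computation the paper alludes to (your key observation $(\mathbf{q}(\lambda+\nu),\alpha_i^{m\vee})=(\mathbf{q}(\lambda),\alpha_i^{m\vee})+(\nu,\alpha_i^{m\vee})$ is exactly what makes the telescoping work), and for part \textbf{(ii)} the paper likewise computes $(\mathbf{q}(\lambda),\alpha_i^{m\vee})\in\{-1,0,1\}$ in the three cases, reads off $\overline{\nabla}_i(x^{\lambda})$ from \eqref{111}, and substitutes into \textbf{(i)}.
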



\begin{proof}
\textbf{(i)} This follows by a direct computation.\newline\textbf{(ii)} Note
that
\[
(\mathbf{q}(\lambda),\alpha_{i}^{m\vee}) =
\begin{cases}
-1\qquad & \hbox{ if }\,\, -m(\alpha_{i})\leq(\lambda,\alpha_{i}^{\vee})<0,\\
0\qquad & \hbox{ if }\,\, 0\leq(\lambda,\alpha_{i}^{\vee})<m(\alpha_{i}),\\
1\qquad & \hbox{ if }\,\, (\lambda,\alpha_{i}^{\vee})=m(\alpha_{i}),
\end{cases}
\]
hence
\[
\overline{\nabla}_{i}(x^{\lambda}) =
\begin{cases}
x^{\lambda}\qquad & \hbox{ if }\,\, -m(\alpha_{i})\leq(\lambda,\alpha
_{i}^{\vee})<0,\\
0\qquad & \hbox{ if }\,\, 0\leq(\lambda,\alpha_{i}^{\vee})<m(\alpha_{i}),\\
-x^{\lambda-\alpha_{i}^{m}}=-x^{s_{i}\lambda}\qquad & \hbox{ if }\,\,
(\lambda,\alpha_{i}^{\vee})=m(\alpha_{i}).
\end{cases}
\]
Now use \textbf{(i)}.
\end{proof}

The following lemma will play an important role in finding the proper choice
of normalizing factor $\mathbf{c}$.

\begin{lem}
\label{togo} For $\nu\in P^{m}$, $\lambda\in C$ and $i=1,\ldots,r$ we have
\[
\psi_{C}^{\mathbf{c}}\bigl(T_{i}x^{\nu}\otimes_{H(\mathbf{k})}v_{\lambda
}\bigr)= \mathbf{c}(\lambda)^{-1}\bigl((\mathbf{k}_{i}-\mathbf{k}_{i}%
^{-1})\overline{\nabla}_{i}(x^{\lambda+\nu}) +\mathbf{d}_{i}(\lambda
)x^{s_{i}(\lambda+\nu)}\bigr)
\]
with $\mathbf{d}_{i}: C\rightarrow\mathbb{C}^{\times}$ given by
\begin{equation}
\label{secondcst}\mathbf{d}_{i}(\lambda):=
\begin{cases}
\mathbf{c}(\lambda)/\mathbf{c}(s_{i}\lambda)\qquad & \hbox{ if } -m(\alpha
_{i})\leq(\lambda,\alpha_{i}^{\vee})<0,\\
\mathbf{k}_{i}\mathbf{c}(\lambda)/\mathbf{c}(s_{i}\lambda)\qquad & \hbox{ if }
(\lambda,\alpha_{i}^{\vee})=0,\\
\mathbf{c}(\lambda)/\mathbf{c}(s_{i}\lambda)\qquad & \hbox{ if } 0<
(\lambda,\alpha_{i}^{\vee})<m(\alpha_{i}),\\
\mathbf{k}_{i}-\mathbf{k}_{i}^{-1}+\mathbf{c}(\lambda)/\mathbf{c}(s_{i}%
\lambda) \quad & \hbox{ if } (\lambda,\alpha_{i}^{\vee})=m(\alpha_{i}).
\end{cases}
\end{equation}

\end{lem}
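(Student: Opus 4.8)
The proof will be a direct computation organised around the two lemmas already at our disposal, Lemma \ref{refl_rep} and Lemma \ref{helplemma}(ii). By the explicit description \eqref{actionInduced} of the $\widetilde{H}^m(\mathbf{k})$-action on $N_C$, the element $T_i\bigl(x^\nu\otimes_{H(\mathbf{k})}v_\lambda\bigr)$ is the sum of $(\mathbf{k}_i-\mathbf{k}_i^{-1})\nabla_i^m(x^\nu)\otimes_{H(\mathbf{k})}v_\lambda$ and $x^{s_i\nu}\otimes_{H(\mathbf{k})}T_iv_\lambda$. Applying $\psi_C^{\mathbf{c}}$ to the first summand, and using that $\psi_C^{\mathbf{c}}$ is a morphism of $\mathbb{C}[P^m]$-modules while $\nabla_i^m(x^\nu)\in\mathbb{C}[P^m]$, one gets $(\mathbf{k}_i-\mathbf{k}_i^{-1})\mathbf{c}(\lambda)^{-1}x^{\lambda}\nabla_i^m(x^\nu)$. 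For the second summand I would substitute the value of $T_iv_\lambda$ from Lemma \ref{refl_rep}---legitimate since $\lambda\in C$, and $s_i\lambda\in C$ as well because $C$ is $W$-invariant---and then evaluate $\psi_C^{\mathbf{c}}$ on the monomial tensors $x^{s_i\nu}\otimes_{H(\mathbf{k})}v_\lambda$ and $x^{s_i\nu}\otimes_{H(\mathbf{k})}v_{s_i\lambda}$, producing $\mathbf{c}(\lambda)^{-1}x^{\lambda+s_i\nu}$ and $\mathbf{c}(s_i\lambda)^{-1}x^{s_i(\lambda+\nu)}$ respectively.

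The second step is to rewrite $x^{\lambda}\nabla_i^m(x^\nu)$ by means of Lemma \ref{helplemma}(ii). Here the key observation is that $\lambda\in C$ says exactly $|(\lambda,\alpha_i^\vee)|\le m(\alpha_i)$, which is precisely the range covered by Lemma \ref{helplemma}(ii); moreover that lemma distinguishes $(\lambda,\alpha_i^\vee)<m(\alpha_i)$ from $(\lambda,\alpha_i^\vee)=m(\alpha_i)$, and this refines the three sign-cases of Lemma \ref{refl_rep} into the four ranges appearing in \eqref{secondcst}. I would then work through these ranges one at a time. In the range $-m(\alpha_i)\le(\lambda,\alpha_i^\vee)<0$ the summand $-x^{\lambda+s_i\nu}$ supplied by Lemma \ref{helplemma}(ii) cancels the summand $(\mathbf{k}_i-\mathbf{k}_i^{-1})x^{\lambda+s_i\nu}$ coming from the $(\mathbf{k}_i-\mathbf{k}_i^{-1})v_\lambda$ term of $T_iv_\lambda$, leaving only $x^{s_i(\lambda+\nu)}$ with coefficient $\mathbf{c}(s_i\lambda)^{-1}$. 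In the range $(\lambda,\alpha_i^\vee)=0$ one has $s_i\lambda=\lambda$, hence $x^{\lambda+s_i\nu}=x^{s_i(\lambda+\nu)}$ and $\mathbf{c}(s_i\lambda)=\mathbf{c}(\lambda)$. In the range $0<(\lambda,\alpha_i^\vee)<m(\alpha_i)$ one has $x^{\lambda}\nabla_i^m(x^\nu)=\overline{\nabla}_i(x^{\lambda+\nu})$ and $T_iv_\lambda=v_{s_i\lambda}$. In the range $(\lambda,\alpha_i^\vee)=m(\alpha_i)$ there is an extra $+x^{s_i(\lambda+\nu)}$ from Lemma \ref{helplemma}(ii), again together with $T_iv_\lambda=v_{s_i\lambda}$.

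In each of the four ranges the $\overline{\nabla}_i(x^{\lambda+\nu})$-contribution occurs with coefficient $(\mathbf{k}_i-\mathbf{k}_i^{-1})\mathbf{c}(\lambda)^{-1}$, and after the simplifications above everything else assembles into a multiple of $x^{s_i(\lambda+\nu)}$; reading off that multiple gives $\mathbf{c}(\lambda)^{-1}\mathbf{d}_i(\lambda)$ with $\mathbf{d}_i(\lambda)$ exactly as in \eqref{secondcst}, and factoring $\mathbf{c}(\lambda)^{-1}$ out front yields the asserted formula. That $\mathbf{d}_i$ is $\mathbb{C}^\times$-valued is clear in the first three ranges from $\mathbf{c}$ being non-vanishing and $\mathbf{k}_i\in\mathbb{C}^\times$; in the range $(\lambda,\alpha_i^\vee)=m(\alpha_i)$ the displayed expression is to be read as the definition of $\mathbf{d}_i(\lambda)$, its non-vanishing being secured by the particular choice of the normalising factor $\mathbf{c}$ that will be made in the continuation of the proof of Theorem \ref{mainTHM}. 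I do not expect a genuine obstacle here: the only points requiring care are lining up the case ranges of the two input lemmas and tracking the cancellation in the range $(\lambda,\alpha_i^\vee)<0$.
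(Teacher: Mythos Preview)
Your proposal is correct and follows essentially the same approach as the paper: both start from \eqref{actionInduced} to obtain the formula $\psi_C^{\mathbf{c}}(T_ix^\nu\otimes_{H(\mathbf{k})}v_\lambda)=\mathbf{c}(\lambda)^{-1}(\mathbf{k}_i-\mathbf{k}_i^{-1})x^{\lambda}\nabla_i^m(x^{\nu})+x^{s_i\nu}\psi_C^{\mathbf{c}}(1\otimes_{H(\mathbf{k})}T_iv_\lambda)$, and then run through the same four cases using Lemma~\ref{refl_rep} for $T_iv_\lambda$ and Lemma~\ref{helplemma}(ii) for $x^{\lambda}\nabla_i^m(x^{\nu})$. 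Your remark that the $\mathbb{C}^\times$-valuedness of $\mathbf{d}_i$ in the fourth case depends on the subsequent specific choice of $\mathbf{c}$ is a valid observation that the paper leaves implicit.
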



\begin{proof}
By a direct computation using \eqref{actionInduced}, we have
\begin{equation}
\label{startformula}\psi_{C}(T_{i}x^{\nu}\otimes_{H(\mathbf{k})}v_{\lambda})=
\mathbf{c}(\lambda)^{-1}(\mathbf{k}_{i}-\mathbf{k}_{i}^{-1})x^{\lambda}%
\nabla_{i}^{m}(x^{\nu})+ x^{s_{i}\nu}\psi_{C}(1\otimes_{H_{0}(\mathbf{k}%
)}T_{i}v_{\lambda})
\end{equation}
for $\lambda\in C$ and $\nu\in P^{m}$. We now consider four cases.\newline%
\textbf{Case 1:} $-m(\alpha_{i})\leq(\lambda,\alpha_{i}^{\vee})<0$.\newline
Then
\[
\psi_{C}^{\mathbf{c}}(1\otimes_{H(\mathbf{k})}T_{i}v_{\lambda})=
\mathbf{c}(\lambda)^{-1}(\mathbf{k}_{i}-\mathbf{k}_{i}^{-1})x^{\lambda
}+\mathbf{c}(s_{i}\lambda)^{-1}x^{s_{i}\lambda}.
\]
Substuting into \eqref{startformula} and using Lemma \ref{helplemma} we get
the desired formula
\[
\psi_{C}^{\mathbf{c}}(T_{i}x^{\nu}\otimes_{H(\mathbf{k})}v_{\lambda})=
\mathbf{c}(\lambda)^{-1}\Bigl((\mathbf{k}_{i}-\mathbf{k}_{i}^{-1}%
)\overline{\nabla}_{i}(x^{\lambda+\nu}) +\frac{\mathbf{c}(\lambda)}%
{\mathbf{c}(s_{i}\lambda)}x^{s_{i}(\lambda+\nu)}\Bigr).
\]
\textbf{Case 2:} $(\lambda,\alpha_{i}^{\vee})=0$.\newline Now we have
\[
\psi_{C}^{\mathbf{c}}(1\otimes_{H(\mathbf{k})}T_{i}v_{\lambda})=\mathbf{c}%
(\lambda)^{-1}\mathbf{k}_{i}x^{\lambda} =\mathbf{c}(s_{i}\lambda
)^{-1}\mathbf{k}_{i}x^{s_{i}\lambda}.
\]
Substituting into \eqref{startformula} and using Lemma \ref{helplemma} we now
get the desired formula
\[
\psi_{C}^{\mathbf{c}}(T_{i}x^{\nu}\otimes_{H(\mathbf{k})}v_{\lambda})=
\mathbf{c}(\lambda)^{-1}\Bigl((\mathbf{k}_{i}-\mathbf{k}_{i}^{-1}%
)\overline{\nabla}_{i}(x^{\lambda+\nu}) +\mathbf{k}_{i}\frac{\mathbf{c}%
(\lambda)}{\mathbf{c}(s_{i}\lambda)}x^{s_{i}(\lambda+\nu)}\Bigr).
\]
\textbf{Case 3:} $0<(\lambda,\alpha_{i}^{\vee})<m(\alpha_{i})$.\newline Then
\[
\psi_{C}^{\mathbf{c}}(1\otimes_{H(\mathbf{k})}T_{i}v_{\lambda})=\mathbf{c}%
(s_{i}\lambda)^{-1}x^{s_{i}\lambda}.
\]
Substitution into \eqref{startformula} and using Lemma \ref{helplemma} gives
the desired formula
\[
\psi_{C}^{\mathbf{c}}(T_{i}x^{\nu}\otimes_{H(\mathbf{k})}v_{\lambda})=
\mathbf{c}(\lambda)^{-1}\Bigl((\mathbf{k}_{i}-\mathbf{k}_{i}^{-1}%
)\overline{\nabla}_{i}(x^{\lambda+\nu}) +\frac{\mathbf{c}(\lambda)}%
{\mathbf{c}(s_{i}\lambda)}x^{s_{i}(\lambda+\nu)}\Bigr).
\]
\textbf{Case 4:} $(\lambda,\alpha_{i}^{\vee})=m(\alpha_{i})$.\newline In this
case
\[
\psi_{C}^{\mathbf{c}}(1\otimes_{H(\mathbf{k})}T_{i}v_{\lambda})=\mathbf{c}%
(s_{i}\lambda)^{-1}x^{s_{i}\lambda},
\]
hence substitution into \eqref{startformula} and using Lemma \ref{helplemma}
gives
\[
\psi_{C}^{\mathbf{c}}(T_{i}x^{\nu}\otimes_{H_{0}(\mathbf{k})}v_{\lambda})=
\mathbf{c}(\lambda)^{-1}\Bigl((\mathbf{k}_{i}-\mathbf{k}_{i}^{-1}%
)\overline{\nabla}_{i}(x^{\lambda+\nu}) +\Bigl(\mathbf{k}_{i}-\mathbf{k}%
_{i}^{-1}+\frac{\mathbf{c}(\lambda)}{\mathbf{c}(s_{i}\lambda)}\Bigr)x^{s_{i}%
(\lambda+\nu)}\Bigr),
\]
as desired.
\end{proof}

We now continue with the proof of Theorem \ref{mainTHM}.
Define parameters $h_{j}(y)\in\mathbb{C}^{\times}$ for $j\in\mathbb{Z}$ and
$y\in\{sh,lg\}$ by
\[%
\begin{split}
h_{j}(y) & :=\mathbf{k}_{y}\qquad\qquad\qquad\hbox{if } j\in n\mathbb{Z}%
_{<0},\\
h_{j}(y) & :=-\mathbf{k}_{y}^{-1}g_{j}(y)^{-1} \qquad\hbox{if } j\in
\mathbb{Z}_{<0}\setminus n\mathbb{Z}_{<0},\\
h_{j}(y) & :=1 \qquad\qquad\quad\qquad\hbox{if } j\in\mathbb{Z}_{\geq0}.
\end{split}
\]
Then $h_{j}(y)=h_{-n+j}(y)$ if $j\in\mathbb{Z}_{<0}$, and $h_{j}%
(y)h_{-sn-j}(y)=1 $ for $j\in\mathbb{Z}_{<0}\setminus n\mathbb{Z}_{<0}$ and
$s\in\mathbb{Z}_{>0}$ such that $-sn<j<0$.

Choose $\mathbf{c}: C\rightarrow\mathbb{C}^{\times}$ by
\begin{equation}
\label{cdef}\mathbf{c}(\lambda):= \prod_{\alpha\in\Phi^{+}}h_{\mathbf{Q}%
(\alpha)(\lambda,\alpha^{\vee})}(\textup{size}(\alpha^{m})), \qquad\lambda\in
C.
\end{equation}
Using
\begin{equation}
\label{crelation}\frac{\mathbf{c}(\lambda)}{\mathbf{c}(s_{i}\lambda)}=
\frac{h_{\mathbf{Q}(\alpha_{i})(\lambda,\alpha_{i}^{\vee})}(\textup{size}%
(\alpha_{i}^{m}))}{h_{-\mathbf{Q}(\alpha_{i})(\lambda,\alpha_{i}^{\vee}%
)}(\textup{size}(\alpha_{i}^{m}))},
\end{equation}
\eqref{relin}, \eqref{secondcst} and Lemma \ref{relform}\textbf{b} one
verifies that for $i=1,\ldots,r$ and $\lambda\in C$,
\[
\mathbf{d}_{i}(\lambda)=
\begin{cases}
h_{-\mathbf{Q}(\alpha_{i})\mathbf{r}_{m(\alpha_{i})}((\lambda,\alpha_{i}%
^{\vee}))}(\textup{size}(\alpha_{i}^{m}))^{-1} \quad & \textup{\ if }\,
m(\alpha_{i})\not \vert \,\, (\lambda,\alpha_{i}^{\vee}),\\
\mathbf{k}_{i}\quad & \textup{\ if }\, m(\alpha_{i})\,\,\,\vert\,\,(\lambda
,\alpha_{i}^{\vee}).
\end{cases}
\]
Rewriting in terms of the representation parameters $g_{j}(y)$ and using Lemma
\ref{relform}\textbf{(b)} we get
\begin{equation}
\label{equalpar}\mathbf{d}_{i}(\lambda)= \mathbf{p}_{i}(\overline{\lambda})
\end{equation}
for $i=1,\ldots,r$ and $\lambda\in C$, with $\mathbf{p}_{i}(\overline{\lambda
}) $ given by \eqref{p}.

Now let $S_{i}: \mathbb{C}[P]\rightarrow\mathbb{C}[P]$ be the linear map
defined by
\[
S_{i}(x^{\lambda}):=(\mathbf{k}_{i}-\mathbf{k}_{i}^{-1})\overline{\nabla}%
_{i}(x^{\lambda})+\mathbf{p}_{i}(\overline{\lambda})x^{s_{i}\lambda}%
,\qquad\lambda\in P,
\]
then Lemma \ref{togo} and \eqref{equalpar} show that for $i=1,\ldots,r$ and
$\lambda\in C$, $\nu\in P^{m}$,
\begin{equation}
\label{relint}S_{i}\bigl(\psi_{C}^{\mathbf{c}}(x^{\nu}\otimes_{H(\mathbf{k}%
)}v_{\lambda})\bigr)= \psi_{C}^{\mathbf{c}}\bigl(T_{i}x^{\nu}\otimes
_{H(\mathbf{k})}v_{\lambda}\bigr).
\end{equation}
Hence the kernel of the epimorphism $\psi_{C}^{\mathbf{c}}: N_{C}%
\twoheadrightarrow\mathbb{C}[P]$ is a $\widetilde{H}^{m}(\mathbf{k}%
)$-submodule. By \eqref{relint} it follows that the $\widetilde{H}%
^{m}(\mathbf{k})$-module structure on $\mathbb{C}[P]$, inherited from the
quotient $\widetilde{H}^{m}(\mathbf{k})$-module $N_{C}/\textup{ker}(\psi
_{C}^{\mathbf{c}})$ by the $\mathbb{C}[P^{m}]$-module isomorphism
$\overline{\psi}_{C}^{\,\mathbf{c}} $ (see \eqref{isoquot}), is explicitly
given by \eqref{formulasTHM}. This completes the proof of Theorem
\ref{mainTHM}.

In subsequent sections, we will work with some conjugations of $\pi$, so the
following Lemma will be useful.


\begin{lem}
\label{pi_conj_stab} Let $\Lambda$ be a lattice in $E$ satisfying $Q
\subseteq\Lambda\subseteq P$. Let $h \in\widetilde{H}^{m}(\mathbf{k},
\Lambda_{0})$ and $\mu\in P$. Then $x^{-\mu} \pi(h) x^{\mu}$ preserves
$\mathbb{C}[\Lambda]$.
\end{lem}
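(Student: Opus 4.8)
The plan is to exploit the decomposition of $\mathbb{C}[P]$ into $\Lambda$-coset blocks and to show that $\pi(\widetilde{H}^m(\mathbf{k},\Lambda_0))$ respects this decomposition. Concretely, for a class $\overline{\mu}\in P/\Lambda$ write $\mathbb{C}[\mu+\Lambda]:=\mathrm{span}\{x^{\nu} \mid \nu\in\mu+\Lambda\}\subseteq\mathbb{C}[P]$, so that $\mathbb{C}[P]=\bigoplus_{\overline{\mu}\in P/\Lambda}\mathbb{C}[\mu+\Lambda]$ and $\mathbb{C}[0+\Lambda]=\mathbb{C}[\Lambda]$. First I would prove the claim that each block $\mathbb{C}[\mu+\Lambda]$ is stable under $\pi(h)$ for every $h\in\widetilde{H}^m(\mathbf{k},\Lambda_0)$. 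Granting this, the Lemma is immediate: $x^{\mu}\,\mathbb{C}[\Lambda]=\mathbb{C}[\mu+\Lambda]$, hence $\pi(h)\bigl(x^{\mu}\,\mathbb{C}[\Lambda]\bigr)\subseteq\mathbb{C}[\mu+\Lambda]$, and applying $x^{-\mu}$ returns $\mathbb{C}[\Lambda]$.

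For the claim it suffices, since $\widetilde{H}^m(\mathbf{k},\Lambda_0)$ is generated by the $T_i$ ($i=1,\dots,r$) together with $\mathbb{C}[\Lambda_0]$, to check stability under $\pi(T_i)$ and under $\pi(x^{\nu})$ with $\nu\in\Lambda_0$. For the latter, $\pi(x^{\nu})x^{\lambda}=x^{\lambda+\nu}$ by \eqref{formulasTHM}, and $\lambda+\nu\in\mu+\Lambda$ whenever $\lambda\in\mu+\Lambda$ because $\nu\in\Lambda_0\subseteq\Lambda$. For $\pi(T_i)$, recall from \eqref{formulasTHM} that $\pi(T_i)x^{\lambda}=(\mathbf{k}_i-\mathbf{k}_i^{-1})\overline{\nabla}_i(x^{\lambda})+\mathbf{p}_i(\overline{\lambda})x^{s_i\lambda}$. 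By \eqref{111} the element $\overline{\nabla}_i(x^{\lambda})$ is a $\mathbb{Z}$-linear combination of monomials $x^{\lambda-j\alpha_i^m}$ with $j\in\mathbb{Z}$, and $\alpha_i^m=m(\alpha_i)\alpha_i\in Q\subseteq\Lambda$, so all these monomials lie in the block of $\lambda$; likewise $s_i\lambda=\lambda-(\lambda,\alpha_i^\vee)\alpha_i\in\lambda+\mathbb{Z}\alpha_i\subseteq\lambda+\Lambda$, and $\mathbf{p}_i(\overline{\lambda})$ is merely a nonzero scalar. Hence $\pi(T_i)$ preserves the $\Lambda$-coset blocks. (Equivalently, one may observe that $W$ acts trivially on $P/Q$, hence on the quotient $P/\Lambda$, so every $\Lambda$-coset is $W$-stable, and that $\overline{\nabla}_i$ introduces only shifts by $\mathbb{Z}\alpha_i^m\subseteq\Lambda$.)

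This establishes the claim for the generators, hence for all of $\widetilde{H}^m(\mathbf{k},\Lambda_0)$ by multiplicativity of $\pi$, and the Lemma follows as indicated above. I expect no genuine obstacle here; the only point requiring a moment's care is confirming that $\overline{\nabla}_i$ shifts weights solely by integer multiples of $\alpha_i^m$ — which one reads off directly from the explicit expansion \eqref{111} — together with the elementary observation that $\Lambda_0=\Lambda\cap P^m$ sits inside $\Lambda$, so that the commutative part of $\widetilde{H}^m(\mathbf{k},\Lambda_0)$ also respects the coset grading.
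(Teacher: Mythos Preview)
Your proof is correct and proceeds by essentially the same computation as the paper's: both reduce to the generators, and both use that $\overline{\nabla}_i$ shifts only by elements of $\mathbb{Z}\alpha_i^m\subseteq Q\subseteq\Lambda$ and that $s_i\lambda-\lambda\in\mathbb{Z}\alpha_i\subseteq\Lambda$. Your packaging via the $P/\Lambda$-coset grading is a clean reformulation of the paper's direct calculation of $x^{-\mu}\pi(T_i)x^{\lambda+\mu}$, but the underlying verifications are identical.
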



\begin{proof}
Since $\pi(x^{\nu})$ for $\nu\in\Lambda_{0}$ commutes with multiplication by
$x^{\mu}$, we need only check that $x^{-\mu} \pi(T_{i}) x^{\mu}$ preserves
$\mathbb{C}[\Lambda]$ for $1 \leq i \leq r$. Let $\lambda\in\Lambda$. By
Theorem \ref{mainTHM}, we have
\[
x^{-\mu} \pi(T_{i})(x^{\lambda+ \mu}) = x^{-\mu}(\mathbf{k}_{i} -
\mathbf{k}_{i}^{-1})\overline{\nabla}_{i}(x^{\lambda+ \mu}) + x^{-\mu}
\mathbf{p}_{i}(\overline{\lambda+ \mu}) x^{s_{i}(\lambda+ \mu)}.
\]
We have
\[
x^{-\mu+ s_{i}(\lambda+ \mu)} = x^{\lambda- ( \lambda+ \mu, \alpha_{i}^{\vee}
) \alpha_{i}} \in\mathbb{C}[\Lambda],
\]
since $( \lambda+ \mu, \alpha_{i}^{\vee} ) \alpha_{i} \in Q$. For the other
term, by \eqref{nablabar} and Lemma \ref{Proj}, we have
\[
\overline{\nabla}_{i} (x^{\lambda+ \mu}) = x^{\lambda+ \mu}g,
\]
where $g \in\mathbb{C}[Q^{m}]$. So now $x^{-\mu} \overline{\nabla}%
_{i}(x^{\lambda+ \mu}) \in\mathbb{C}[\Lambda]$.
\end{proof}



\subsection{The metaplectic Weyl group representation}

\label{subs33}

Let $\mathbb{C}(P^{m})$ be the quotient field of $\mathbb{C}[P^{m}]$.

Let $\widetilde{H}^{m}_{\text{loc}}(\mathbf{k})$ be algebra obtained by
localizing the extended affine Hecke algebra $\widetilde{H}^{m}(\mathbf{k})$
at the multiplicative subset $\mathbb{C}[P^{m}]\setminus\{0\}$ (which
satisfies the right Ore condition). The canonical algebra embedding
$\mathbb{C}[P^{m}]\hookrightarrow\widetilde{H}^{m}_{\text{loc}}(\mathbf{k})$
uniquely extends to an algebra embedding $\mathbb{C}(P^{m})\hookrightarrow
\widetilde{H}^{m}_{\text{loc}}(\mathbf{k})$, with $\mathbb{C}(P^{m})$ the
quotient field of $\mathbb{C}[P^{m}]$. Furthermore,
\begin{equation}
\label{lineariso}H(\mathbf{k})\otimes\mathbb{C}(P^{m})\overset{\sim
}{\longrightarrow}\widetilde{H}^{m}_{\text{loc}}(\mathbf{k})
\end{equation}
as vector spaces by the multiplication map. The defining relations of
$\widetilde{H}^{m}_{\text{loc}}(\mathbf{k})$ with respect to the decomposition
\eqref{lineariso} are captured by the extended cross relations
\[
T_{i}f=(s_{i}f)T_{i}+\bigl(\mathbf{k}_{i}-\mathbf{k}_{i}^{-1}\bigr)\left(
\frac{f-s_{i}f} {1-x^{\alpha_{i}^{m}}}\right)
\]
for $i\in\{1,\ldots,r\}$ and $f\in\mathbb{C}(P^{m})$, where we use the
extension of the $W$-action on $\mathbb{C}[P^{m}]$ to $\mathbb{C}(P^{m})$ by
field automorphisms.

If the multiplicity function $\mathbf{k}$ is identically equal to one then
$\widetilde{H}^{m}_{\text{loc}}(\mathbf{k})$ is isomorphic to the semi-direct
product algebra
\[
W\ltimes\mathbb{C}(P^{m}):=\mathbb{C}[W]\otimes\mathbb{C}(P^{m})
\]
with algebra structure given by $(v\otimes f)(w\otimes g):=vw\otimes
(w^{-1}f)g$ for $v,w\in W$ and $f,g\in\mathbb{C}(P^{m})$. We write $gw$ for
the element $(1\otimes g)(w\otimes1)=w\otimes w^{-1}g$ in $W\ltimes
\mathbb{C}(P^{m})$ if no confusion can arise.

Define for $\alpha\in\Phi$ the $c$-functions $c_{\alpha}=c_{\alpha}^{m}%
\in\mathbb{C}(Q^{m})$ by
\begin{equation}
\label{cfunction}c_{\alpha}:=\frac{1-\mathbf{k}_{\alpha}^{2}x^{\alpha^{m}}%
}{1-x^{\alpha^{m}}}.
\end{equation}
We write $c_{i}:=c_{\alpha_{i}}$ ($i=1,\ldots,r$) for the $c$-functions at the
simple roots. Note that $w(c_{\alpha})=c_{w\alpha}$ for $w\in W$ and
$\alpha\in\Phi$.

By \cite{Ka} we have the following result.

\begin{thm}
\label{localization_thm} There exists a unique algebra isomorphism
\begin{equation}
\label{phi}\varphi: W\ltimes\mathbb{C}(P^{m})\overset{\sim}{\longrightarrow
}\widetilde{H}^{m}_{\text{loc}}(\mathbf{k})
\end{equation}
given by $\varphi(f)=f$ for $f\in\mathbb{C}(P^{m})$ and
\begin{equation}
\label{varphi}\varphi(s_{i}):=\frac{\mathbf{k}_{i}}{c_{i}}T_{i}+1-\frac
{\mathbf{k}_{i}^{2}}{c_{i}}%
\end{equation}
for $i=1\ldots,r$.
\end{thm}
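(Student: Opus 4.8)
The plan is to build $\varphi$ from the skew group algebra structure of $W\ltimes\mathbb{C}(P^{m})$ and to verify that it is an isomorphism by comparing actions on one faithful module, so that no braid relation has to be checked by a direct rank‑two computation. The module is the \emph{localized polynomial representation} $\rho$ of $\widetilde{H}^{m}_{\text{loc}}(\mathbf{k})$ on $\mathbb{C}(P^{m})$, i.e.\ the extension to $\mathbb{C}(P^{m})$ of the $\widetilde{H}^{m}(\mathbf{k})$-module $\mathbb{C}[P^{m}]\simeq\widetilde{H}^{m}(\mathbf{k})\otimes_{H(\mathbf{k})}\mathbb{C}_{0}$ described in Remark \ref{repRem}(i): elements of $\mathbb{C}(P^{m})$ act by multiplication and, forced by the extended cross relations and $\rho(T_{i})(1)=\mathbf{k}_{i}$, the generator $T_{i}$ acts as the operator
\[
\rho(T_{i})=\mathbf{k}_{i}s_{i}+(\mathbf{k}_{i}-\mathbf{k}_{i}^{-1})\,\frac{1-s_{i}}{1-x^{\alpha_{i}^{m}}},
\]
where $\frac{1-s_{i}}{1-x^{\alpha_{i}^{m}}}$ sends $f$ to $(f-s_{i}f)/(1-x^{\alpha_{i}^{m}})$. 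Since $c_{i}=(1-\mathbf{k}_{i}^{2}x^{\alpha_{i}^{m}})/(1-x^{\alpha_{i}^{m}})$ is a unit of $\mathbb{C}(P^{m})$, the element $\tau_{i}:=\frac{\mathbf{k}_{i}}{c_{i}}T_{i}+1-\frac{\mathbf{k}_{i}^{2}}{c_{i}}$ is a well-defined element of $\widetilde{H}^{m}_{\text{loc}}(\mathbf{k})$, and a one-variable rational-function computation — collecting the $s_{i}$-part and the identity part and using $c_{i}-\mathbf{k}_{i}^{2}=(1-\mathbf{k}_{i}^{2})/(1-x^{\alpha_{i}^{m}})$ — shows that $\rho(\tau_{i})$ equals the field automorphism $s_{i}$ of $\mathbb{C}(P^{m})$.

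Next I would record that $\rho$ is faithful. Writing a general element of $\widetilde{H}^{m}_{\text{loc}}(\mathbf{k})$ as $\sum_{w\in W}f_{w}T_{w}$ with $f_{w}\in\mathbb{C}(P^{m})$ via \eqref{lineariso}, its image under $\rho$ equals $\sum_{w\in W}g_{w}\,w$ as an operator on $\mathbb{C}(P^{m})$, where the $g_{w}$ are obtained from the $f_{w}$ by a substitution that is triangular for the Bruhat order with invertible diagonal; since the distinct field automorphisms $w$ of $\mathbb{C}(P^{m})$ are linearly independent over $\mathbb{C}(P^{m})$ by Dedekind's lemma, the operator is zero iff all $f_{w}=0$. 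As $\rho(\tau_{i})=s_{i}$ and the automorphisms $s_{i}$ of $\mathbb{C}(P^{m})$ satisfy the Coxeter relations and $s_{i}\circ(\text{multiplication by }f)=(\text{multiplication by }s_{i}f)\circ s_{i}$, faithfulness lifts these to identities in $\widetilde{H}^{m}_{\text{loc}}(\mathbf{k})$:
\[
\tau_{i}^{2}=1,\qquad \underbrace{\tau_{i}\tau_{j}\tau_{i}\cdots}_{m_{ij}\ \text{factors}}=\underbrace{\tau_{j}\tau_{i}\tau_{j}\cdots}_{m_{ij}\ \text{factors}},\qquad \tau_{i}f=(s_{i}f)\,\tau_{i}\quad(f\in\mathbb{C}(P^{m})).
\]
By the skew group algebra structure of $W\ltimes\mathbb{C}(P^{m})$ — an algebra homomorphism out of it is the same datum as an algebra map on $\mathbb{C}(P^{m})$ together with invertible images of $s_{1},\dots,s_{r}$ satisfying the Coxeter relations and intertwining the $W$-action — there is an algebra homomorphism $\varphi$ with $\varphi|_{\mathbb{C}(P^{m})}=\mathrm{id}$ and $\varphi(s_{i})=\tau_{i}$, and it is the unique such homomorphism because $\mathbb{C}(P^{m})$ and the $s_{i}$ generate $W\ltimes\mathbb{C}(P^{m})$ as an algebra.

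Finally I would check bijectivity. Surjectivity is immediate: solving the defining relation gives $T_{i}=\frac{c_{i}}{\mathbf{k}_{i}}(\tau_{i}-1)+\mathbf{k}_{i}\in\mathrm{im}\,\varphi$, and $\mathbb{C}(P^{m})$ together with $T_{1},\dots,T_{r}$ generate $\widetilde{H}^{m}_{\text{loc}}(\mathbf{k})$. For injectivity, the tautological action of $W\ltimes\mathbb{C}(P^{m})$ on $\mathbb{C}(P^{m})$ ($w$ acting as the automorphism $w$, and $f$ by multiplication) is faithful by the same Dedekind argument, and $\rho\circ\varphi$ agrees with it on the generators $s_{i}$ and on $\mathbb{C}(P^{m})$, hence everywhere; therefore any $z\in\ker\varphi$ acts as $\rho(\varphi(z))=0$ on $\mathbb{C}(P^{m})$, so $z=0$. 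The only genuinely computational step is the identity $\rho(\tau_{i})=s_{i}$, a single one-variable verification; once it is in hand the braid relations for the $\tau_{i}$ are inherited for free from those of the ordinary reflections, which is the whole point of passing through the localized polynomial representation. The one preliminary I would pin down before running the triangularity and faithfulness arguments is that \eqref{lineariso}, together with the extended cross relations, really does present $\widetilde{H}^{m}_{\text{loc}}(\mathbf{k})$ as a free $\mathbb{C}(P^{m})$-module of rank $|W|$ with basis $\{T_{w}\}_{w\in W}$.
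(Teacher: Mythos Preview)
Your argument is correct and self-contained. Note, however, that the paper does not actually prove this theorem: it simply records it as a known result, citing Kato \cite{Ka} (``By \cite{Ka} we have the following result''). So there is no in-paper proof to compare against beyond that citation.

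What you have written is essentially the standard proof of this localization isomorphism: compute that the normalized intertwiners $\tau_{i}$ act as the plain reflections $s_{i}$ in the faithful localized polynomial module, and then transport the Coxeter relations back to $\widetilde{H}^{m}_{\text{loc}}(\mathbf{k})$ via Dedekind independence of the field automorphisms $w$. This is in the same spirit as Kato's original argument and as the treatments in \cite{Ch,Ma}. Your remark at the end is well taken: the only structural input you are using beyond a one-variable computation is that \eqref{lineariso} makes $\widetilde{H}^{m}_{\text{loc}}(\mathbf{k})$ a free left $\mathbb{C}(P^{m})$-module on $\{T_{w}\}_{w\in W}$, which the paper states just before the theorem. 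One small cosmetic point: in your triangularity step you could equally well argue directly that $\{\tau_{w}:=\varphi(w)\}_{w\in W}$ is another $\mathbb{C}(P^{m})$-basis of $\widetilde{H}^{m}_{\text{loc}}(\mathbf{k})$ (since $\tau_{i}=\frac{\mathbf{k}_{i}}{c_{i}}T_{i}+\cdots$ with invertible leading coefficient and the change of basis is upper-triangular for Bruhat order), which gives bijectivity of $\varphi$ without invoking faithfulness a second time.
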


The $\varphi(s_{i})$ are the so-called \textit{normalized intertwiners} of the
extended affine Hecke algebra $\widetilde{H}^{m}(\mathbf{k})$ (see \cite{Ka}
and, e.g., \cite[§3.3.3]{Ch}). They play an instrumental role in the
representation theory of $\widetilde{H}^{m}(\mathbf{k})$.

Note that for $i=1\ldots,r$ we have
\[
\varphi^{-1}(T_{i})=\mathbf{k}_{i}+\mathbf{k}_{i}^{-1}c_{i}(s_{i}-1)
\]
in $W\ltimes\mathbb{C}(P^{m})$, which are the Demazure-Lusztig operators
\cite{Lu}.

\begin{rema}
The localization isomorphism \eqref{phi} extends to the double affine Hecke
algebra, see \cite[§3.3.3]{Ch}. In its most natural form it involves
normalized intertwiners dual to $\varphi(s_{i})$, as well as an additional
dual intertwiner naturally attached to the simple simple reflection of the
affine Weyl group $W\ltimes Q^{m}$.
\end{rema}



\begin{defi}
\label{BaxterizedRep} Let $(\rho,M)$ be a left $\widetilde{H}^{m}(\mathbf{k}%
)$-module. Write $(\rho_{\text{loc}},M_{\text{loc}})$ for the associated
localized $W\ltimes\mathbb{C}(P^{m})$-module
\[
M_{\text{loc}}:=\widetilde{H}^{m}_{\text{loc}}(\mathbf{k})\otimes
_{\widetilde{H}^{m}(\mathbf{k})}M
\]
with representation map $\rho_{\text{loc}}: W\ltimes\mathbb{C}(P^{m}%
)\rightarrow\textup{End}(M_{\text{loc}})$ defined by
\[
\rho_{\text{loc}}(X)(h\otimes_{\widetilde{H}^{m}(\mathbf{k})}m):=(\varphi
(X)h)\otimes_{\widetilde{H}^{m}(\mathbf{k})}m
\]
for $X\in W\ltimes\mathbb{C}(P^{m})$, $h\in\widetilde{H}^{m}_{\text{loc}%
}(\mathbf{k})$ and $m\in M$.
\end{defi}

Note that $M_{\text{loc}}\simeq\mathbb{C}(P^{m})\otimes_{\mathbb{C}[P^{m}]}M$
as vector spaces with the isomorphism mapping $fh\otimes_{\widetilde{H}%
^{m}(\mathbf{k})}m$ to $f\otimes_{\mathbb{C}[P^{m}]}\rho(h)m$ for
$f\in\mathbb{C}(P^{m})$, $h\in H(\mathbf{k})$ and $m\in M$ (the map is well
defined by the Bernstein-Zelevinsky presentation of $\widetilde{H}%
^{m}_{\text{loc}}(\mathbf{k})$).

\begin{rema}
\label{backtorho} Identifying $M$ as subspace of $M_{\text{loc}}$ by the
linear embedding $M\hookrightarrow M_{\text{loc}}$, $m\mapsto1\otimes
_{\widetilde{H}^{m}(\mathbf{k})}m$, we have
\[
\rho_{\text{loc}}(\varphi^{-1}(h))m=\rho(h)m,\qquad h\in\widetilde{H}%
^{m}(\mathbf{k}),\,\, m\in M.
\]

\end{rema}


\begin{rema}
A Bethe integrable system with extended affine Hecke algebra symmetry is a
$\widetilde{H}^{m}(\mathbf{k})$-module $V$ endowed with the integrable
structure obtained from the action of the associated dual intertwiners on
$\mathbb{C}(P^{m})\otimes V$. The integrable structure is thus encoded by
solutions of (braid version of) generalized quantum Yang-Baxter equations with
spectral parameter. In the literature on integrable systems one sometimes says
that the integrable structure arises from Baxterizing the affine Hecke algebra
module structure on the quantum state space. See e.g. \cite{S} for an example
involving the Heisenberg XXZ spin-$\frac{1}{2}$ chain.

The intertwiners are also instrumental in the construction of the quantum
affine KZ equations, see, e.g., \cite[§1.3.2]{Ch}.
\end{rema}


Recall the metaplectic affine Hecke algebra representation $(\pi
,\mathbb{C}[P])$ from Theorem \ref{mainTHM}. In the following proposition we
explicitly describe $(\pi_{\text{loc}},\mathbb{C}[P]_{\text{loc}})$.

\begin{prop}
\label{locrep} \textbf{(i)} $\mathbb{C}(P)=\bigoplus_{\overline{\lambda}\in
P/P^{m}}\mathbb{C}(P^{m})x^{\lambda}$.\newline\textbf{(ii)} $\mathbb{C}%
[P]_{\text{loc}}\simeq\mathbb{C}(P)$ as vector spaces by
\[
fh\otimes_{\widetilde{H}^{m}(\mathbf{k})}g\mapsto f(\pi(h)g),\qquad
f\in\mathbb{C}(P^{m}), \,\,\, g\in\mathbb{C}[P],\,\, h\in H(\mathbf{k}).
\]
\textbf{(iii)} The $\pi_{\text{loc}}$-action of $W\ltimes\mathbb{C}(P^{m})$ on
$\mathbb{C}(P)$ (identifying $\mathbb{C}[P]_{\text{loc}}$ with $\mathbb{C}(P)$
using the linear isomorphism from \textbf{(ii)}) is explicitly given by
\[%
\begin{split}
\pi_{\text{loc}}(s_{i})(fx^{\lambda}) & = (s_{i}f)\left( \left( \frac
{(1-\mathbf{k}_{i}^{2})x^{-(\mathbf{q}(\lambda),\alpha_{i}^{m\vee})\alpha
_{i}^{m}}} {1-\mathbf{k}_{i}^{2}x^{\alpha_{i}^{m}}}\right) x^{\lambda}+ \left(
\frac{\mathbf{k}_{i}\mathbf{p}_{i}(\overline{\lambda})(1-x^{\alpha_{i}^{m}}%
)}{1-\mathbf{k}_{i}^{2}x^{\alpha_{i}^{m}}}\right) x^{s_{i}\lambda}\right) ,\\
\pi_{\text{loc}}(g)(fx^{\lambda}) & =gf x^{\lambda}%
\end{split}
\]
for $f,g\in\mathbb{C}(P^{m})$, $\lambda\in P$ and $i=1,\ldots,r$ (recall that
$\mathbf{p}_{i}(\overline{\lambda})$ is given by \eqref{p}).
\end{prop}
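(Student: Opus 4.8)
The plan is to compute the three ingredients of the localized module directly from the machinery already set up. For part \textbf{(i)}, I would observe that $C$ contains a complete set of coset representatives for $P/P^m$ (this was noted right after the definition of $C$), and that $\mathbb{C}(P)$ is spanned over $\mathbb{C}(P^m)$ by the $x^\lambda$ as $\lambda$ ranges over such a set; the sum is direct because distinct coset representatives differ by an element not in $P^m$, and $\mathbb{C}[P]=\bigoplus_{\overline\lambda}\mathbb{C}[P^m]x^\lambda$ as $\mathbb{C}[P^m]$-modules, which localizes to the claimed direct sum. For part \textbf{(ii)}, I would invoke the general description of $M_{\mathrm{loc}}$ recorded just before Remark \ref{backtorho}: $M_{\mathrm{loc}}\simeq\mathbb{C}(P^m)\otimes_{\mathbb{C}[P^m]}M$ via $fh\otimes_{\widetilde H^m(\mathbf k)}m\mapsto f\otimes_{\mathbb{C}[P^m]}\rho(h)m$. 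Applying this with $M=\mathbb{C}[P]$ and $\rho=\pi$, and then using the $\mathbb{C}(P^m)$-module isomorphism $\mathbb{C}(P^m)\otimes_{\mathbb{C}[P^m]}\mathbb{C}[P]\xrightarrow{\sim}\mathbb{C}(P)$ coming from part \textbf{(i)} (sending $f\otimes g$ to $fg$), gives exactly the stated map $fh\otimes g\mapsto f\cdot\pi(h)g$.

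For part \textbf{(iii)}, the action of $g\in\mathbb{C}(P^m)$ is immediate from the definition of $\rho_{\mathrm{loc}}$ and $\varphi(f)=f$. The real content is the formula for $\pi_{\mathrm{loc}}(s_i)$. By Remark \ref{backtorho} or directly, $\pi_{\mathrm{loc}}(s_i)$ acts on $\mathbb{C}(P)$ by the operator $\varphi(s_i)=\frac{\mathbf k_i}{c_i}\pi(T_i)+1-\frac{\mathbf k_i^2}{c_i}$, where $\pi(T_i)$ and multiplication by elements of $\mathbb{C}(P^m)$ are understood via the identification in \textbf{(ii)}, and $c_i=\frac{1-\mathbf k_i^2 x^{\alpha_i^m}}{1-x^{\alpha_i^m}}$. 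So I would take a general element $fx^\lambda$ with $f\in\mathbb{C}(P^m)$, $\lambda\in P$, and first compute $\pi(T_i)(fx^\lambda)$. Since $x^{-(\mu,\alpha_i^{m\vee})\alpha_i^m}\in\mathbb{C}[P^m]$ for any $\mu$, the operator $\pi(T_i)$ interacts with the $\mathbb{C}(P^m)$-coefficient in a controlled way: using the extended cross relation $T_i g=(s_i g)T_i+(\mathbf k_i-\mathbf k_i^{-1})\frac{g-s_i g}{1-x^{\alpha_i^m}}$ valid in $\widetilde H^m_{\mathrm{loc}}(\mathbf k)$, together with $\pi(T_i)x^\lambda=(\mathbf k_i-\mathbf k_i^{-1})\overline\nabla_i(x^\lambda)+\mathbf p_i(\overline\lambda)x^{s_i\lambda}$ and $\overline\nabla_i(x^\lambda)=\left(\frac{1-x^{-(\mathbf q(\lambda),\alpha_i^{m\vee})\alpha_i^m}}{1-x^{\alpha_i^m}}\right)x^\lambda$, one expands everything over the $\mathbb{C}(P^m)$-basis $\{x^\lambda, x^{s_i\lambda}\}$ of the rank-two piece.

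Then I would substitute into $\frac{\mathbf k_i}{c_i}\pi(T_i)(fx^\lambda)+(1-\frac{\mathbf k_i^2}{c_i})fx^\lambda$, collect the $x^\lambda$ and $x^{s_i\lambda}$ terms, clear the common denominator $1-\mathbf k_i^2 x^{\alpha_i^m}$ coming from $\mathbf k_i/c_i=\frac{\mathbf k_i(1-x^{\alpha_i^m})}{1-\mathbf k_i^2 x^{\alpha_i^m}}$, and simplify. The expected outcome: the coefficient of $x^\lambda$ should collapse, after using $\mathbf k_i-\mathbf k_i^{-1}$ times $\frac{1-x^{-(\mathbf q(\lambda),\alpha_i^{m\vee})\alpha_i^m}}{1-x^{\alpha_i^m}}$ weighted by $\frac{\mathbf k_i(1-x^{\alpha_i^m})}{1-\mathbf k_i^2x^{\alpha_i^m}}$ plus the leftover $1-\frac{\mathbf k_i^2(1-x^{\alpha_i^m})}{1-\mathbf k_i^2x^{\alpha_i^m}}=\frac{1-\mathbf k_i^2x^{\alpha_i^m}-\mathbf k_i^2+\mathbf k_i^2 x^{\alpha_i^m}}{1-\mathbf k_i^2x^{\alpha_i^m}}=\frac{1-\mathbf k_i^2}{1-\mathbf k_i^2x^{\alpha_i^m}}$, into $\frac{(1-\mathbf k_i^2)x^{-(\mathbf q(\lambda),\alpha_i^{m\vee})\alpha_i^m}}{1-\mathbf k_i^2x^{\alpha_i^m}}$; and the coefficient of $x^{s_i\lambda}$ becomes $\frac{\mathbf k_i\mathbf p_i(\overline\lambda)(1-x^{\alpha_i^m})}{1-\mathbf k_i^2x^{\alpha_i^m}}$. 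Finally I would account for the $s_i$-twist on the coefficient $f$: because $s_i$ acts by the semidirect-product rule in $W\ltimes\mathbb{C}(P^m)$, the $f$ gets replaced by $s_i f$ in front, giving the stated formula. The main obstacle will be the bookkeeping in the algebraic simplification of part \textbf{(iii)} — correctly tracking which monomials lie in $\mathbb{C}[P^m]$ so that cross relations apply, and verifying the numerator identities cancel exactly as claimed; everything else is formal.
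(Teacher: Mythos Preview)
Your proposal is essentially correct and, for parts \textbf{(ii)} and \textbf{(iii)}, follows the paper's argument closely: the paper also reduces \textbf{(iii)} to the case $f=1$ via the semidirect-product identity $s_i\cdot f=(s_if)\cdot s_i$ in $W\ltimes\mathbb{C}(P^m)$ (so $\pi_{\mathrm{loc}}(s_i)(fx^\lambda)=(s_if)\,\pi_{\mathrm{loc}}(s_i)(x^\lambda)$), and then carries out exactly the simplification you sketch. Your write-up of \textbf{(iii)} is slightly tangled in that you first invoke the cross relation $T_if=(s_if)T_i+\cdots$ for general $f$, then describe the simplification only for $f=1$, and finally invoke the semidirect-product reduction; the paper simply does the reduction first and never touches the cross relation for $f$, which is cleaner and avoids the redundancy.

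For part \textbf{(i)} you take a genuinely different route. The paper argues via the character group $G$ of $P/P^m$: it lets $G$ act on $\mathbb{C}(P)$ by $\chi\cdot x^\lambda=\chi(\overline\lambda)x^\lambda$, decomposes $\mathbb{C}(P)$ into $G$-isotypical pieces, and identifies $\mathbb{C}(P)^G$ with $\mathbb{C}(P^m)$ using the averaging projector on $\mathbb{C}[P]$. Your approach---observe $\mathbb{C}[P]=\bigoplus_{\overline\lambda}\mathbb{C}[P^m]x^\lambda$ as a free $\mathbb{C}[P^m]$-module and localize---is perfectly valid and arguably more elementary, but your sketch omits one step: you must check that localizing $\mathbb{C}[P]$ at $\mathbb{C}[P^m]\setminus\{0\}$ really yields all of $\mathbb{C}(P)$, i.e.\ that the $x^\lambda$ \emph{span} $\mathbb{C}(P)$ over $\mathbb{C}(P^m)$ and not merely a subring. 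This follows because $\mathbb{C}[P]$ is a finite free $\mathbb{C}[P^m]$-module and a domain, so its localization is a finite-dimensional $\mathbb{C}(P^m)$-algebra which is an integral domain, hence a field, hence equal to $\mathrm{Frac}(\mathbb{C}[P])=\mathbb{C}(P)$. Once you insert that sentence, your argument for \textbf{(i)} is complete; the paper's character-theoretic proof has the advantage of being fully self-contained, while yours avoids introducing $G$ at the cost of quoting this standard finiteness fact.
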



\begin{proof}
\textbf{(i)} Let $G$ be the group of characters of the finite abelian group
$P/P^{m}$. It acts by field automorphisms on $\mathbb{C}(P)$ by
\[
\chi\cdot x^{\lambda}:=\chi(\overline{\lambda})x^{\lambda},\qquad\lambda\in
P,\,\, \chi\in G.
\]
Decomposing $\mathbb{C}(P)$ in $G$-isotypical components yields
\[
\mathbb{C}(P)=\bigoplus_{\overline{\lambda}\in P/P^{m}}\mathbb{C}%
(P)^{G}x^{\lambda},
\]
with $\mathbb{C}(P)^{G}$ the subfield of $G$-invariant elements in
$\mathbb{C}(P)$. It remains to show that $\mathbb{C}(P)^{G}=\mathbb{C}(P^{m}%
)$, for which it suffices to show that $\mathbb{C}[P]^{G}=\mathbb{C}[P^{m}]$.
The latter follows from the fact that
\[
\textup{pr}(x^{\lambda})=\delta_{\overline{\lambda},\overline{0}}x^{\lambda
},\qquad\lambda\in P
\]
for the projection map $\textup{pr}: \mathbb{C}[P]\twoheadrightarrow
\mathbb{C}[P]^{G}$ defined by
\[
\textup{pr}(f):=\frac{1}{\#G}\sum_{\chi\in G}\chi\cdot f,\qquad f\in
\mathbb{C}[P].
\]
\textbf{(ii)} We have
\[%
\begin{split}
\mathbb{C}[P]_{\text{loc}} & =\widetilde{H}^{m}_{\text{loc}}(\mathbf{k})
\otimes_{\widetilde{H}^{m}(\mathbf{k})}\mathbb{C}[P]\\
& \simeq\mathbb{C}(P^{m})\otimes_{\mathbb{C}[P^{m}]}\mathbb{C}[P]\simeq
\mathbb{C}(P)
\end{split}
\]
with the last isomorphism mapping $f\otimes_{\mathbb{C}[P^{m}]}g$ to $fg$ for
$f\in\mathbb{C}(P^{m})$ and $g\in\mathbb{C}[P]$. This is well defined and an
isomorphism due to the second formula of \eqref{formulasTHM} and due to part
\textbf{(i)} of the proposition. The result now immediately follows.\newline%
\textbf{(iii)} For $f,g\in\mathbb{C}(P^{m})$ and $\lambda\in P$ we have
\[%
\begin{split}
\pi_{\text{loc}}(g)(fx^{\lambda}) & =\pi_{\textup{loc}}(g)\bigl(f\otimes
_{\widetilde{H}^{m}(\mathbf{k})} x^{\lambda}\bigr)\\
& =(gf)\otimes_{\widetilde{H}^{m}(\mathbf{k})}x^{\lambda}=gfx^{\lambda}%
=\pi_{\text{loc}}(gf)x^{\lambda},
\end{split}
\]
this establishes the second formula. For the first formula it then suffices to
prove that
\begin{equation}
\label{sidot}\pi_{\text{loc}}(s_{i})(x^{\lambda})= \left( \frac{(1-\mathbf{k}%
_{i}^{2})x^{-(\mathbf{q}(\lambda),\alpha_{i}^{m\vee})\alpha_{i}^{m}}}
{1-\mathbf{k}_{i}^{2}x^{\alpha_{i}^{m}}}\right) x^{\lambda}+ \left(
\frac{\mathbf{k}_{i}\mathbf{p}_{i}(\overline{\lambda})(1-x^{\alpha_{i}^{m}}%
)}{1-\mathbf{k}_{i}^{2}x^{\alpha_{i}^{m}}}\right) x^{s_{i}\lambda}%
\end{equation}
for $i=1,\ldots,r$ and $\lambda\in P$. By the first formula of
\eqref{formulasTHM} we have
\[%
\begin{split}
\pi_{\text{loc}}(s_{i})x^{\lambda} & = \frac{\mathbf{k}_{i}}{c_{i}}\pi
(T_{i})x^{\lambda}+\Bigl(1-\frac{\mathbf{k}_{i}^{2}}{c_{i}}\Bigr)x^{\lambda}\\
& =\frac{\mathbf{k}_{i}}{c_{i}}\Bigl((\mathbf{k}_{i}-\mathbf{k}_{i}%
^{-1})\Bigl(\frac{x^{\lambda}-x^{\lambda-(\mathbf{q}(\lambda),\alpha
_{i}^{m\vee})\alpha_{i}^{m}}} {1-x^{\alpha_{i}^{m}}}\Bigr)+\mathbf{p}%
_{i}(\overline{\lambda})x^{s_{i}\lambda}\Bigr) +\Bigl(1-\frac{\mathbf{k}%
_{i}^{2}}{c_{i}}\Bigr)x^{\lambda}.
\end{split}
\]
Substituting the definition of the $c$-function $c_{i}$ (see
\eqref{cfunction}) gives
\[
\pi_{\text{loc}}(s_{i})x^{\lambda}= \frac{ (\mathbf{k}_{i}^{2}-1)(x^{\lambda
}-x^{\lambda-(\mathbf{q}(\lambda),\alpha_{i}^{m\vee})\alpha_{i}^{m}})
+\mathbf{k}_{i}\mathbf{p}_{i}(\overline{\lambda})(1-x^{\alpha_{i}^{m}%
})x^{s_{i}\lambda}+(1-\mathbf{k}_{i}^{2})x^{\lambda}} {1-\mathbf{k}_{i}%
^{2}x^{\alpha_{i}^{m}}}.
\]
Simplifying the expression gives \eqref{sidot}.
\end{proof}


\begin{rema}
\label{standard_W} Since $q(0)=0$ and $\mathbf{p}_{i}(\overline{0}%
)=\mathbf{k}_{i}$ we have $\pi_{\text{loc}}(s_{i})1=1$. Hence $\mathbb{C}%
(P^{m})$ is a $\pi_{\text{loc}}$-submodule of $\mathbb{C}(P)$ with the
$W\ltimes\mathbb{C}(P^{m})$-action reducing to the standard one,
\[
\pi_{\text{loc}}(s_{i})f=s_{i}f,\qquad\pi_{\text{loc}}(g)f:=gf
\]
for $i=1,\ldots,r$ and $f,g\in\mathbb{C}(P^{m})$.
\end{rema}

Recall the definition of the representation parameters $g_{j}(y)$
($j\in\mathbb{Z}$, $y\in\{sh,lg\}$), see Definition \ref{RepPar2}. We
conjugate the $\pi_{\text{loc}}$-action by a certain factor, in order to line
it up with the Weyl group action of Chinta-Gunnells \cite{CG07, CG}.

\begin{thm}
[Metaplectic Weyl group representation]\label{mWrep} The following formulas
turn $\mathbb{C}(P)$ into a left $W\ltimes\mathbb{C}(P^{m})$-module,
\begin{equation}
\label{formulaCGPgen}%
\begin{split}
\sigma(s_{i})(fx^{\lambda}):= & \frac{(1-\mathbf{k}_{i}^{2})x^{(\mathbf{q}%
(-\lambda),\alpha_{i}^{m\vee})\alpha_{i}^{m}}} {(1-\mathbf{k}_{i}^{2}%
x^{\alpha_{i}^{m}})}(s_{i}f)x^{\lambda}\\
& \,\,\,+\mathbf{k}_{i}^{2}g_{\mathbf{Q}(\alpha_{i})-\mathbf{B}(\lambda
,\alpha_{i})}(\textup{size}(\alpha_{i}^{m})) \frac{(1-x^{-\alpha_{i}^{m}}%
)}{(1-\mathbf{k}_{i}^{2}x^{\alpha_{i}^{m}})}(s_{i}f)x^{\alpha_{i}+s_{i}%
\lambda},\\
\sigma(g)(fx^{\lambda}):= & gfx^{\lambda}%
\end{split}
\end{equation}
for $f,g\in\mathbb{C}(P^{m})$, $\lambda\in P$ and $i=1,\ldots,r$.
\end{thm}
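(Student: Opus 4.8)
The plan is to obtain the claimed $W\ltimes\mathbb{C}(P^m)$-module structure on $\mathbb{C}(P)$ as a \emph{conjugate} of the localized metaplectic representation $(\pi_{\textup{loc}},\mathbb{C}(P))$ described explicitly in Proposition \ref{locrep}(iii). Since conjugating a representation by any invertible linear operator yields an isomorphic representation, it suffices to exhibit an operator $U$ on $\mathbb{C}(P)$, built out of multiplication operators and possibly the $G$-action used in the proof of Proposition \ref{locrep}(i), such that $\sigma(X)=U\,\pi_{\textup{loc}}(X)\,U^{-1}$ for $X\in W\ltimes\mathbb{C}(P^m)$. The module axioms (in particular the braid relations for the $s_i$) then come for free from Theorem \ref{localization_thm} and Proposition \ref{locrep}. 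So the entire content of the theorem is the bookkeeping that identifies the conjugating factor and checks that it intertwines the two formulas for the action of $s_i$ and of $g\in\mathbb{C}(P^m)$.

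First I would pin down the conjugating operator. Because $\sigma(g)=\pi_{\textup{loc}}(g)$ (both are just multiplication by $g$), $U$ must commute with multiplication by every $f\in\mathbb{C}(P^m)$; hence $U$ acts diagonally on the decomposition $\mathbb{C}(P)=\bigoplus_{\overline{\lambda}\in P/P^m}\mathbb{C}(P^m)x^\lambda$ of Proposition \ref{locrep}(i), i.e.\ $U(fx^\lambda)=u(\lambda)fx^\lambda$ for some $u(\lambda)\in\mathbb{C}(P^m)^\times$ depending only on $\overline{\lambda}$ (after fixing coset representatives). Comparing the $s_i$-formula in Proposition \ref{locrep}(iii) with \eqref{formulaCGPgen} term by term — matching the coefficient of the ``$x^\lambda$'' piece and the coefficient of the ``$x^{s_i\lambda}$'' piece — will force a relation of the shape $u(\lambda)/u(s_i\lambda)=(\text{explicit monomial in }\mathbb{C}[Q^m])\cdot(\text{ratio of }g\text{'s})$, and one reads off that $u(\lambda)$ should be (up to an overall $\mathbb{C}(P^m)$-scalar, which drops out) a product over $\Phi^+$ of monomials and $g$-factors evaluated at $\mathbf{B}(\lambda,\alpha)$, entirely parallel to the factor $\mathbf{c}(\lambda)$ in \eqref{cdef} used in the proof of Theorem \ref{mainTHM}. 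Concretely I expect $U$ to be $x^{-\lambda}\mapsto$ a suitably chosen monomial times $x^{-\lambda}$ on each coset, combined with the identification $\mathbf{p}_i(\overline{\lambda})=-\mathbf{k}_i g_{-\mathbf{B}(\lambda,\alpha_i)}(\textup{size}(\alpha_i^m))$ from \eqref{p}.

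The verification then splits into two routine checks. (1) On each summand $\mathbb{C}(P^m)x^\lambda$, substitute $fx^\lambda = U^{-1}(u(\lambda)fx^\lambda)$ into $\pi_{\textup{loc}}(s_i)$ from Proposition \ref{locrep}(iii), apply $U$ to the result, and simplify using $\mathbf{q}(-\lambda)=-\lambda-\mathbf{r}(-\lambda)$, the relation $(\mathbf{q}(\lambda),\alpha_i^{m\vee})=-(\mathbf{q}(-\lambda),\alpha_i^{m\vee})$ up to a correction when $m(\alpha_i)\mid(\lambda,\alpha_i^\vee)$ (so the exponents match those in \eqref{formulaCGPgen}), the defining relation $g_j(y)g_{n-j}(y)=\mathbf{k}_y^{-2}$ and periodicity $g_j=g_{\mathbf{r}_n(j)}$ from Definition \ref{RepPar2}, and Lemma \ref{relform}(b) to pass between congruences mod $n$ and mod $m(\alpha_i)$; this should reproduce \eqref{formulaCGPgen} exactly. (2) The check for $g\in\mathbb{C}(P^m)$ is immediate since $U$ commutes with $\pi_{\textup{loc}}(g)$. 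The main obstacle is purely computational: getting the exponent $(\mathbf{q}(-\lambda),\alpha_i^{m\vee})\alpha_i^m$ and the $g$-subscript $\mathbf{Q}(\alpha_i)-\mathbf{B}(\lambda,\alpha_i)$ in \eqref{formulaCGPgen} to emerge correctly requires carefully tracking the ``off-by-one in the remainder'' behaviour of $\mathbf{r}_{m(\alpha_i)}$ under $\lambda\mapsto s_i\lambda$, exactly the subtlety already handled when passing from \eqref{secondcst} to \eqref{equalpar} in the proof of Theorem \ref{mainTHM}; I would organize that computation by the four cases $(\lambda,\alpha_i^\vee)$ positive/zero/negative-but-$>-m(\alpha_i)$/divisible-by-$m(\alpha_i)$, as in Lemma \ref{togo}, so that each $g$-factor simplifies by one application of Definition \ref{RepPar2}(a) or (c).
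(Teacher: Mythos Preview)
Your overall strategy---obtain $\sigma$ by conjugating $\pi_{\textup{loc}}$ and thereby inherit the module axioms for free---is exactly the paper's approach. However, your deduction about the shape of the conjugating operator $U$ is wrong, and the error would derail the computation.

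From $\sigma(g)=\pi_{\textup{loc}}(g)$ you correctly conclude that $U$ is $\mathbb{C}(P^m)$-linear, but $\mathbb{C}(P^m)$-linearity does \emph{not} force $U$ to be diagonal on the coset decomposition; it only says $U$ is given by an invertible matrix over $\mathbb{C}(P^m)$ with respect to the basis $\{x^\lambda\}_{\overline{\lambda}\in P/P^m}$. In fact a diagonal $U$ with $u(\lambda)\in\mathbb{C}(P^m)^\times$ \emph{cannot} work: conjugating $\pi_{\textup{loc}}(s_i)$ by such a $U$ still sends $\mathbb{C}(P^m)x^\lambda$ into $\mathbb{C}(P^m)x^\lambda\oplus\mathbb{C}(P^m)x^{s_i\lambda}$, whereas the second term of \eqref{formulaCGPgen} lives in $\mathbb{C}(P^m)x^{\alpha_i+s_i\lambda}$, and $\alpha_i\notin P^m$ unless $m(\alpha_i)=1$. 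So your expected form for $U$---a product over $\Phi^+$ of $\mathbb{C}(P^m)$-monomials and $g$-factors analogous to $\mathbf{c}(\lambda)$---is looking in the wrong place.

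The paper's conjugating operator is much simpler than you anticipate: it is multiplication by the single monomial $x^{\rho-\rho^m}\in\mathbb{C}[P]$, where $\rho=\sum_i\varpi_i$ and $\rho^m=\sum_i m(\alpha_i)\varpi_i$. This operator is $\mathbb{C}(P^m)$-linear but \emph{permutes} the cosets rather than preserving them. With this $U$ the verification needs no case analysis at all; it reduces to the two clean identities
\[
\mathbf{q}(\lambda+\rho^m-\rho)=-\mathbf{q}(-\lambda),
\qquad
-\mathbf{k}_i\,\mathbf{p}_i(\overline{\lambda-\rho})=\mathbf{k}_i^2\,g_{\mathbf{Q}(\alpha_i)-\mathbf{B}(\lambda,\alpha_i)}(\textup{size}(\alpha_i^m)),
\]
the first coming from $r_s(t+s-1)=s-1-r_s(-t)$ and the second from $\mathbf{B}(\rho,\alpha_i)=\mathbf{Q}(\alpha_i)$. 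The extra $x^{\alpha_i}$ in the second term of \eqref{formulaCGPgen} then drops out of $s_i(\rho-\rho^m)=(\rho-\rho^m)-\alpha_i+\alpha_i^m$. None of the four-case bookkeeping from Lemma~\ref{togo} is required here; that machinery was needed to build $\pi$, not to pass from $\pi_{\textup{loc}}$ to $\sigma$.
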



\begin{proof}
Write $\rho:=\frac{1}{2}\sum_{\alpha\in\Phi^{+}}\alpha$ and $\rho^{m}%
:=\frac{1}{2}\sum_{\alpha\in\Phi^{+}}\alpha^{m}$ for the half sum of positive
roots of $\Phi$ and $\Phi^{m}$ respectively. Then $s_{i}(\rho)=\rho-\alpha
_{i}$ and $s_{i}(\rho^{m})=\rho^{m}-\alpha_{i}^{m}$, in particular $\rho
=\sum_{i=1}^{r}\varpi_{i}\in P $ and
\[
\rho^{m}=\sum_{i=1}^{r}\varpi_{i}^{m}=\sum_{i=1}^{r}m(\alpha_{i})\varpi_{i}\in
P^{m}.
\]
Consider now the $W\ltimes\mathbb{C}(P^{m})$ on $\mathbb{C}(P)$ defined by
\begin{equation}
\label{sigma_piloc}\sigma(X)f:=x^{\rho-\rho^{m}}\pi_{\text{loc}}%
(X)(x^{\rho^{m}-\rho}f),\qquad X\in W\ltimes\mathbb{C}(P^{m}),\,\,
f\in\mathbb{C}(P).
\end{equation}
Then $\sigma(g)f=gf$ for $g\in\mathbb{C}(P^{m})$ and $f\in\mathbb{C}(P)$, and
\begin{equation}
\label{almost}%
\begin{split}
\sigma(s_{i})x^{\lambda} & =\frac{(1-\mathbf{k}_{i}^{2})x^{-(\mathbf{q}%
(\lambda+\rho^{m}-\rho), \alpha_{i}^{m\vee})\alpha_{i}^{m}}}{1-\mathbf{k}%
_{i}^{2}x^{\alpha_{i}^{m}}}x^{\lambda}\\
& \qquad\qquad-\left( \frac{\mathbf{k}_{i}\mathbf{p}_{i}(\overline
{\lambda-\rho})(1-x^{-\alpha_{i}^{m}})} {1-\mathbf{k}_{i}^{2}x^{\alpha_{i}%
^{m}}}\right) x^{\alpha_{i}+s_{i}\lambda}.
\end{split}
\end{equation}
Note that
\[
\mathbf{r}(\lambda+\rho^{m}-\rho)=\rho^{m}-\rho-\mathbf{r}(-\lambda)
\]
since $r_{s}(t+s-1)=s-1-r_{s}(-t)$ for $s\in\mathbb{Z}_{>0}$ and
$t\in\mathbb{Z}$, hence
\[
\mathbf{q}(\lambda+\rho^{m}-\rho)=-\mathbf{q}(-\lambda).
\]
Furthermore,
\[
-\mathbf{k}_{i}\mathbf{p}_{i}(\overline{\lambda-\rho})=\mathbf{k}_{i}%
^{2}g_{\mathbf{Q}(\alpha_{i})-\mathbf{B}(\lambda,\alpha_{i})}(\textup{size}%
(\alpha_{i}^{m}))
\]
for $\lambda\in P$ since $-\mathbf{B}(\lambda-\rho,\alpha_{i})= \mathbf{Q}%
(\alpha_{i})-\mathbf{B}(\lambda,\alpha_{i})$. Substituting these two formulas
in \eqref{almost} gives the desired result.
\end{proof}


As in Remark \ref{repRem}\textbf{(ii)}, fix a lattice $\Lambda\subseteq E$
satisfying $Q\subseteq\Lambda\subseteq P$ and set $\Lambda_{0}:=\Lambda\cap
P^{m} $. Then $Q^{m}\subseteq\Lambda_{0}\subseteq P^{m}$ and recall that
$\Lambda_{0}$ can alternatively be described as
\[
\Lambda_{0}=\{\lambda\in\Lambda\,\, | \,\, \mathbf{B}(\lambda,\alpha
)\equiv0\quad\textup{\ mod } n \,\,\,\forall\alpha\in\Phi\},
\]
which places us directly in the context of \cite{CG}. Note that $\Lambda$ and
$\Lambda_{0}$ are automatically $W$-stable. In particular the subalgebra of
$W\ltimes\mathbb{C}(P^{m})$ generated by $W$ and $\mathbb{C}(\Lambda_{0})$ is
isomorphic to the semi-direct product algebra $W\ltimes\mathbb{C}(\Lambda
_{0})$.

Let $\mathbb{C}(\Lambda_{0})$ and $\mathbb{C}(\Lambda)$ be the subfields of
$\mathbb{C}(P)$ generated by $x^{\nu}$ ($\nu\in\Lambda_{0})$ and $x^{\lambda}$
($\lambda\in\Lambda$) respectively. Similarly to Proposition \ref{locrep}%
\textbf{(i)} we have the decomposition
\[
\mathbb{C}(\Lambda)=\bigoplus_{\overline{\lambda}\in\Lambda/\Lambda_{0}}
\mathbb{C}(\Lambda_{0})x^{\lambda}.
\]
Then $\mathbb{C}(\Lambda)\subseteq\mathbb{C}(P)$ is a $W\ltimes\mathbb{C}%
(\Lambda_{0})$-submodule with respect to the action $\sigma$. Writing
\[
\sigma_{\Lambda}: W\ltimes\mathbb{C}(\Lambda_{0})\rightarrow\textup{End}%
(\mathbb{C}(\Lambda))
\]
for the resulting representation map, we get

\begin{cor}
\label{CGcor} In the setup as above, the representation map $\sigma_{\Lambda}$
is explicitly given by
\begin{equation}
\label{formulaCGPgen2}%
\begin{split}
\sigma_{\Lambda}(s_{i})(fx^{\lambda}):= & \frac{(1-\mathbf{k}_{i}%
^{2})x^{(\mathbf{q}(-\lambda),\alpha_{i}^{m\vee})\alpha_{i}^{m}}}
{(1-\mathbf{k}_{i}^{2}x^{\alpha_{i}^{m}})}(s_{i}f)x^{\lambda}\\
& \,\,\,+\mathbf{k}_{i}^{2}g_{\mathbf{Q}(\alpha_{i})-\mathbf{B}(\lambda
,\alpha_{i})}(\textup{size}(\alpha_{i}^{m})) \frac{(1-x^{-\alpha_{i}^{m}}%
)}{(1-\mathbf{k}_{i}^{2}x^{\alpha_{i}^{m}})}(s_{i}f)x^{\alpha_{i}+s_{i}%
\lambda},\\
\sigma(g)(fx^{\lambda}):= & gfx^{\lambda}%
\end{split}
\end{equation}
for $f,g\in\mathbb{C}(\Lambda_{0})$, $\lambda\in\Lambda$ and $i=1,\ldots,r$.
\end{cor}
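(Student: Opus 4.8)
The plan is to deduce Corollary~\ref{CGcor} from Theorem~\ref{mWrep} purely by restriction, with essentially no new computation. The key point is that once one knows $\mathbb{C}(\Lambda)$ is stable under the $\sigma$-action of the subalgebra $W\ltimes\mathbb{C}(\Lambda_0)\subseteq W\ltimes\mathbb{C}(P^m)$, the explicit formulas \eqref{formulaCGPgen2} are simply \eqref{formulaCGPgen} specialized to $f\in\mathbb{C}(\Lambda_0)$ and $\lambda\in\Lambda$, so there is nothing further to verify. Thus the whole content of the proof is the stability check, and I would organize it around the decomposition $\mathbb{C}(\Lambda)=\bigoplus_{\overline{\lambda}\in\Lambda/\Lambda_0}\mathbb{C}(\Lambda_0)x^{\lambda}$ (the analogue of Proposition~\ref{locrep}\textbf{(i)} already recorded above), so that it suffices to show that $\sigma(X)$ sends each element $fx^{\lambda}$ with $f\in\mathbb{C}(\Lambda_0)$, $\lambda\in\Lambda$, back into $\mathbb{C}(\Lambda)$ for $X\in W\ltimes\mathbb{C}(\Lambda_0)$.

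First I would dispose of the trivial case: for $g\in\mathbb{C}(\Lambda_0)$ the operator $\sigma(g)$ is multiplication by $g$, which visibly preserves $\mathbb{C}(\Lambda)$. Then I would treat $\sigma(s_i)$ for $i=1,\ldots,r$. Fixing $f\in\mathbb{C}(\Lambda_0)$ and $\lambda\in\Lambda$, I would observe: $s_if\in\mathbb{C}(\Lambda_0)$ because $\Lambda_0$ is $W$-stable; both rational prefactors in \eqref{formulaCGPgen} are rational functions of $x^{\alpha_i^m}$ alone (the exponent $(\mathbf{q}(-\lambda),\alpha_i^{m\vee})$ is an integer and the $g$-factor is a nonzero scalar by Definition~\ref{RepPar2}), hence lie in $\mathbb{C}(Q^m)\subseteq\mathbb{C}(\Lambda_0)$ thanks to $Q^m\subseteq\Lambda_0$; and the two monomials occurring are $x^{\lambda}$ and $x^{\alpha_i+s_i\lambda}=x^{\lambda+(1-(\lambda,\alpha_i^{\vee}))\alpha_i}$, the latter lying in $\mathbb{C}(\Lambda)$ since $\alpha_i\in Q\subseteq\Lambda$. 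Combining these gives $\sigma(s_i)(fx^{\lambda})\in\mathbb{C}(\Lambda_0)x^{\lambda}+\mathbb{C}(\Lambda_0)x^{\alpha_i+s_i\lambda}\subseteq\mathbb{C}(\Lambda)$, as required.

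Having established stability, I would conclude that $\mathbb{C}(\Lambda)$ is a $W\ltimes\mathbb{C}(\Lambda_0)$-submodule of $(\sigma,\mathbb{C}(P))$ and that the induced representation map $\sigma_\Lambda$ is just the restriction of \eqref{formulaCGPgen}, i.e.\ is given by \eqref{formulaCGPgen2}. I do not expect any genuine obstacle: the only thing requiring attention is the bookkeeping of which lattice each factor belongs to, and this is controlled entirely by the chains of inclusions $Q^m\subseteq\Lambda_0\subseteq P^m$ and $Q\subseteq\Lambda\subseteq P$ together with the automatic $W$-stability of $\Lambda$ and $\Lambda_0$ (which also guarantees, as noted just above the corollary, that $W\ltimes\mathbb{C}(\Lambda_0)$ really is a subalgebra of $W\ltimes\mathbb{C}(P^m)$).
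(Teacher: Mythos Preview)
Your proposal is correct and matches the paper's approach: the paper treats this corollary as immediate from Theorem~\ref{mWrep} and the setup paragraph preceding it, which already asserts that $\mathbb{C}(\Lambda)$ is a $W\ltimes\mathbb{C}(\Lambda_0)$-submodule without spelling out the verification. Your stability argument simply makes explicit what the paper leaves implicit, and the identification of $\sigma_\Lambda$ as the restriction of \eqref{formulaCGPgen} is exactly the intended content.
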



\begin{rema}
Consider the special case that $\mathbf{k}: \Phi^{m}\rightarrow\mathbb{C}%
^{\times}$ is constant and the representation parameters $g_{j}(y)$ satisfy
$g_{j}(sh)=g_{j}(lg)$ for all $j\in\mathbf{Z}$. We call this the equal Hecke
and representation parameter case. Then $\sigma_{\Lambda}$ is exactly the
Chinta-Gunnells \cite{CG07,CG} Weyl group action. This is immediately apparent
by comparing \eqref{formulaCGPgen2} with \cite[(7)]{CGP} (the parameter $v$ in
\cite{CGP} corresponds to $\mathbf{k}^{2}$). Note that our technique gives an
independent and uniform proof that the formulas of Chinta-Gunnells do indeed
give an action of the Weyl group.
\end{rema}


\begin{rema}
Note that $\sigma_{\Lambda}$ reduces at $n=1$ to the standard $W$-action.
However, it is in fact not the standard action on $\mathbb{C}(P^{m})$, due to
the fact that we have conjugated $\pi_{\text{loc}}$ by $x^{\rho- \rho^{m}}$
(compare with Remark \ref{standard_W}).
\end{rema}


Set $\Phi(w):=\Phi^{+}\cap w^{-1}\Phi^{-}$ ($w\in W$) and let $w_{0}\in W$ be
the longest Weyl group element.

\begin{defi}
For $\lambda\in P^{+}$ define $\widetilde{\mathcal{W}}_{\lambda}\in
\mathbb{C}(P)$ by
\[
\widetilde{\mathcal{W}}_{\lambda}:=\Bigl(\prod_{\alpha\in\Phi^{+}}c_{\alpha
}\Bigr) \sum_{w\in W}(-1)^{\ell(w)}\Bigl(\prod_{\alpha\in\Phi(w^{-1}%
)}x^{\alpha^{m}}\Bigr) \sigma(w)\bigl(x^{w_{0}\lambda}\bigr).
\]

\end{defi}


In the equal Hecke and parameter case, McNamara's \cite[Thm. 15.2]{McN}
metaplectic Casselman-Shalika formula relates $\widetilde{\mathcal{W}%
}_{\lambda}$ to the spherical Whittaker function of metaplectic covers of
unramified reductive groups over local fields, see also \cite[Thm. 16]{CGP}.
It is a natural open problem what the corresponding representation theoretic
interpretation is of $\widetilde{\mathcal{W}}_{\lambda}$ in the unequal Hecke
and/or representation parameter case.

In the following section we will obtain in Theorem \ref{WhittakerDL} an
expression of $\widetilde{\mathcal{W}}_{\lambda}$ in terms of metaplectic
analogues of Demazure-Lusztig operators, generalizing \cite[Thm. 16]{CGP}.



\section{Metaplectic Demazure-Lusztig operators}

\label{sect4}
In the previous section we used the localization isomorphism $\varphi:
W\ltimes\mathbb{C}(P^{m})\overset{\sim}{\longrightarrow} \widetilde{H}%
^{m}_{\text{loc}}(\mathbf{k})$ to obtain the metaplectic Weyl group
representation $\sigma$ from the metaplectic affine Hecke algebra
representation $\pi$. In this section we use the localization isomorphism to
turn the metaplectic Weyl group representation $\sigma$ into a localized
affine Hecke algebra representation involving metaplectic Demazure-Lusztig
type operators. This leads to a generalization of some of the results in
\cite[§3]{CGP} to unequal Hecke and representation parameters, and simplifies
some of the proofs in \cite[§3]{CGP}.

Define the algebra map
\[
\tau: \widetilde{H}^{m}_{\text{loc}}(\mathbf{k})\rightarrow\textup{End}%
(\mathbb{C}(P))
\]
by $\tau:=\sigma\circ\varphi^{-1}$.

\begin{prop}
\label{tau_pol} For $h \in\widetilde{H}^{m}(\mathbf{k})$ and $g \in
\mathbb{C}[P]$, we have
\[
\tau(h)(g) = x^{\rho- \rho^{m}}\pi(h) x^{\rho^{m}-\rho}g.
\]
In particular, the restriction of $\tau$ to $\widetilde{H}^{m}(\mathbf{k})$
preserves $\mathbb{C}[P]$, and the restriction of $\tau$ to $\widetilde{H}%
^{m}(\mathbf{k}, \Lambda_{0})$ preserves $\mathbb{C}[\Lambda]$.
\end{prop}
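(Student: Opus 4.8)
The plan is to unwind the definition $\tau=\sigma\circ\varphi^{-1}$ and chain together two facts already in place: the description \eqref{sigma_piloc} of $\sigma$ as the conjugate of $\pi_{\textup{loc}}$ by $x^{\rho-\rho^{m}}$, and Remark \ref{backtorho}, which says that on the canonically embedded copy of $\mathbb{C}[P]$ inside $\mathbb{C}[P]_{\textup{loc}}$ the operator $\pi_{\textup{loc}}(\varphi^{-1}(h))$ agrees with $\pi(h)$ for $h\in\widetilde{H}^{m}(\mathbf{k})$. (Note $\varphi^{-1}(h)$ makes sense and lies in $W\ltimes\mathbb{C}(P^{m})$, since $\widetilde{H}^{m}(\mathbf{k})\subseteq\widetilde{H}^{m}_{\textup{loc}}(\mathbf{k})$ and $\varphi$ is the isomorphism \eqref{phi}, so $\sigma$ may be applied to it.) The one piece of bookkeeping to set up first is that, under the identification $\mathbb{C}[P]_{\textup{loc}}\cong\mathbb{C}(P)$ of Proposition \ref{locrep}\textbf{(ii)}, the embedding $m\mapsto 1\otimes_{\widetilde{H}^{m}(\mathbf{k})}m$ of Remark \ref{backtorho} is nothing but the inclusion $\mathbb{C}[P]\hookrightarrow\mathbb{C}(P)$; this is immediate from the explicit formula in Proposition \ref{locrep}\textbf{(ii)} by taking $f=1$ and $h=1$.

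With this in hand, fix $h\in\widetilde{H}^{m}(\mathbf{k})$ and $g\in\mathbb{C}[P]$. Since $\rho\in P$ and $\rho^{m}\in P^{m}\subseteq P$, we have $\rho^{m}-\rho\in P$, so $x^{\rho^{m}-\rho}g$ lies in $\mathbb{C}[P]$ and Remark \ref{backtorho} applies to it. Plugging $X=\varphi^{-1}(h)$ and $f=g$ into \eqref{sigma_piloc} gives $\tau(h)(g)=\sigma(\varphi^{-1}(h))(g)=x^{\rho-\rho^{m}}\pi_{\textup{loc}}(\varphi^{-1}(h))(x^{\rho^{m}-\rho}g)$, and since $x^{\rho^{m}-\rho}g\in\mathbb{C}[P]$ the inner operator may be replaced by $\pi(h)$, yielding
\[
\tau(h)(g)=x^{\rho-\rho^{m}}\,\pi(h)\,x^{\rho^{m}-\rho}g,
\]
as asserted.

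For the two ``in particular'' claims I would argue formally from this formula. Multiplication by $x^{\pm(\rho-\rho^{m})}$ preserves $\mathbb{C}[P]$ because $\rho-\rho^{m}\in P$, and $\pi(h)$ preserves $\mathbb{C}[P]$ by Theorem \ref{mainTHM}; composing, $\tau(h)=x^{\rho-\rho^{m}}\pi(h)x^{\rho^{m}-\rho}$ preserves $\mathbb{C}[P]$. For the refinement, the formula exhibits $\tau(h)$, for $h\in\widetilde{H}^{m}(\mathbf{k},\Lambda_{0})$, in the shape $x^{-\mu}\pi(h)x^{\mu}$ with $\mu:=\rho^{m}-\rho\in P$, so Lemma \ref{pi_conj_stab} directly gives that $\tau(h)$ preserves $\mathbb{C}[\Lambda]$. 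There is no real computational obstacle here; the only thing needing care is keeping the three incarnations of the state space --- $\mathbb{C}[P]$, the localized module $\mathbb{C}[P]_{\textup{loc}}$, and the field $\mathbb{C}(P)$ --- consistently identified, so that \eqref{sigma_piloc}, Remark \ref{backtorho}, and Proposition \ref{locrep}\textbf{(ii)} can be composed without slippage.
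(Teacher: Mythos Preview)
Your proof is correct and follows essentially the same approach as the paper: the paper's proof is a one-line citation of \eqref{sigma_piloc}, Proposition \ref{locrep}\textbf{(ii)}, and Remark \ref{backtorho} for the formula, and of Theorem \ref{mainTHM} and Lemma \ref{pi_conj_stab} for the preservation statements. You have simply unpacked these references carefully, including the bookkeeping that the canonical embedding of Remark \ref{backtorho} becomes the inclusion $\mathbb{C}[P]\hookrightarrow\mathbb{C}(P)$ under the identification of Proposition \ref{locrep}\textbf{(ii)}.
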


\begin{proof}
The formula follows from \eqref{sigma_piloc}, Proposition \ref{locrep}%
\textbf{(ii)} and Remark \ref{backtorho}, and then the statements about
restrictions follow from Theorem \ref{mainTHM} and Lemma \ref{pi_conj_stab}.
\end{proof}


\begin{prop}
\label{tauPropaction} We have
\[%
\begin{split}
\tau(T_{i})(fx^{\lambda}) & =\mathbf{k}_{i}fx^{\lambda}+ \mathbf{k}_{i}%
^{-1}c_{i}\bigl(\sigma(s_{i})(fx^{\lambda})-fx^{\lambda}\bigr),\\
\tau(g)(fx^{\lambda}) & =gfx^{\lambda}%
\end{split}
\]
for $f,g\in\mathbb{C}(P^{m})$, $\lambda\in P$ and $i=1,\ldots,r$.
\end{prop}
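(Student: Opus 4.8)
The plan is to read off $\tau(T_i)$ and $\tau(g)$ directly from the definition $\tau=\sigma\circ\varphi^{-1}$, using that the localization isomorphism $\varphi$ of Theorem \ref{localization_thm} is completely explicit and that, by Theorem \ref{mWrep}, the representation $\sigma$ restricted to the subalgebra $\mathbb{C}(P^{m})\subseteq W\ltimes\mathbb{C}(P^{m})$ is just multiplication. Since $\sigma$ is an algebra homomorphism $W\ltimes\mathbb{C}(P^{m})\rightarrow\textup{End}(\mathbb{C}(P))$, it suffices to compute $\varphi^{-1}(T_i)$ and $\varphi^{-1}(g)$ inside $W\ltimes\mathbb{C}(P^{m})$ and then apply $\sigma$.

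First I would dispose of the case $g\in\mathbb{C}(P^{m})$. Because $\varphi$ fixes $\mathbb{C}(P^{m})$ pointwise, so does $\varphi^{-1}$, hence $\tau(g)=\sigma(g)$, and by the last line of \eqref{formulaCGPgen} this acts on $\mathbb{C}(P)$ as multiplication by $g$; thus $\tau(g)(fx^{\lambda})=gfx^{\lambda}$, which is the second asserted identity.

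Next, for $T_i$ I would invert the relation $\varphi(s_i)=\frac{\mathbf{k}_i}{c_i}T_i+1-\frac{\mathbf{k}_i^{2}}{c_i}$. Multiplying on the left by $\mathbf{k}_i^{-1}c_i$ in $\widetilde{H}^{m}_{\text{loc}}(\mathbf{k})$ and solving for $T_i$ gives $\varphi^{-1}(T_i)=\mathbf{k}_i+\mathbf{k}_i^{-1}c_i(s_i-1)$ in $W\ltimes\mathbb{C}(P^{m})$ — this is precisely the Demazure--Lusztig formula already recorded just after Theorem \ref{localization_thm}. Applying $\sigma$ and using $\sigma(c_i)=$ multiplication by $c_i$ (as $c_i\in\mathbb{C}(Q^{m})\subseteq\mathbb{C}(P^{m})$) together with $\sigma(1)=\textup{id}$, I obtain $\tau(T_i)=\mathbf{k}_i\,\textup{id}+\mathbf{k}_i^{-1}c_i\bigl(\sigma(s_i)-\textup{id}\bigr)$ as operators on $\mathbb{C}(P)$; evaluating at $fx^{\lambda}$ yields exactly $\mathbf{k}_i fx^{\lambda}+\mathbf{k}_i^{-1}c_i\bigl(\sigma(s_i)(fx^{\lambda})-fx^{\lambda}\bigr)$.

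The one point that needs care — and the only place a slip could occur — is the bookkeeping in the semidirect product: the product $c_is_i$ occurring in $\varphi^{-1}(T_i)$ is $(1\otimes c_i)(s_i\otimes 1)$ in $W\ltimes\mathbb{C}(P^{m})$, so that $\sigma(c_is_i)$ means ``first apply $\sigma(s_i)$, then multiply by $c_i$'' (one must resist replacing $c_i$ by $s_i(c_i)$, which would be the wrong order). Once the convention ``$gw$ for $(1\otimes g)(w\otimes 1)$'' fixed in Subsection \ref{subs33} is respected, there is no genuine obstacle: the proposition is essentially a transcription of the explicit form of $\varphi^{-1}(T_i)$ through the algebra map $\sigma$, with no further computation required.
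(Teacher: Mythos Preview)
Your proposal is correct and follows exactly the same approach as the paper's own proof, which simply records that $\varphi^{-1}(T_i)=\mathbf{k}_i+\mathbf{k}_i^{-1}c_i(s_i-1)$ and $\varphi^{-1}(g)=g$ and declares the result immediate. Your additional remarks on the ordering convention in the semidirect product are a helpful clarification but do not constitute a different argument.
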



\begin{proof}
This is immediate from the fact that $\varphi^{-1}(T_{i})=\mathbf{k}%
_{i}+\mathbf{k}_{i}^{-1}c_{i}(s_{i}-1)$ and $\varphi^{-1}(g)=g$ for
$i=1,\ldots,r$ and $g\in\mathbb{C}(P^{m})$.
\end{proof}


Define the linear operator
\begin{equation}
\label{CGP_DL}\mathcal{T}_{i}:=-\mathbf{k}_{i}\tau\bigl(x^{\rho^{m}}T_{i}%
^{-1}x^{-\rho^{m}}\bigr) \in\textup{End}(\mathbb{C}(P)).
\end{equation}


\begin{defi}
We call $\mathcal{T}_{i}\in\textup{End}(\mathbb{C}(P))$ ($i=1,\ldots,r$) the
metaplectic De\-ma\-zure-Lusztig operators.
\end{defi}

By a direct computation,
\begin{equation}
\label{altT}\mathcal{T}_{i}(f)=(1-\mathbf{k}_{i}^{2}x^{\alpha_{i}^{m}})\left(
\frac{f-x^{\alpha_{i}^{m}}\sigma(s_{i})f} {1-x^{\alpha_{i}^{m}}}\right)
-f,\qquad f\in\mathbb{C}(P).
\end{equation}
They restrict to well-defined linear operators on $\mathbb{C}(\Lambda)$ for
any lattice $\Lambda$ in $V$ satisfying $Q\subseteq\Lambda\subseteq P$, in
which case they reduce for the equal Hecke and representation parameter case
to the Demazure-Lusztig operators \cite[(11)]{CGP}.


\begin{lem}
\label{polpres} The metaplectic Demazure-Lusztig operator $\mathcal{T}_{i}$
stabilizes $\mathbb{C}[P]$ and $\mathbb{C}[\Lambda]$ for $i=1,\ldots,r$.
\end{lem}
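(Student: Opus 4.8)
The plan is to rewrite $\mathcal{T}_i$ in terms of the polynomial representation $\pi$ of Theorem \ref{mainTHM} and then read off the stability from properties already established. First I would record that $T_i$ is invertible in $H(\mathbf{k})$: the quadratic relation of Definition \ref{H}\,\textbf{a} gives $T_i^{-1} = T_i - (\mathbf{k}_i - \mathbf{k}_i^{-1}) \in H(\mathbf{k}) \subseteq \widetilde{H}^m(\mathbf{k})$, so in particular $x^{\rho^m}T_i^{-1}x^{-\rho^m} \in \widetilde{H}^m(\mathbf{k})$. Since $\pi(x^{\pm\rho^m})$ acts on $\mathbb{C}[P]$ as multiplication by $x^{\pm\rho^m}$ (second formula of \eqref{formulasTHM}) and $\pi$ is an algebra homomorphism, Proposition \ref{tau_pol} gives, for every $g \in \mathbb{C}[P]$,
\[
\mathcal{T}_i(g) = -\mathbf{k}_i\, x^{\rho-\rho^m}\,\pi\bigl(x^{\rho^m}T_i^{-1}x^{-\rho^m}\bigr)\bigl(x^{\rho^m-\rho}g\bigr) = -\mathbf{k}_i\, x^{\rho}\,\pi(T_i^{-1})\bigl(x^{-\rho}g\bigr),
\]
i.e. $\mathcal{T}_i = -\mathbf{k}_i\, x^{\rho}\,\pi(T_i^{-1})\, x^{-\rho}$ as operators on $\mathbb{C}[P]$.

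For $\mathbb{C}[P]$ this already finishes the argument: $\pi(T_i^{-1})$ preserves $\mathbb{C}[P]$ because $(\pi,\mathbb{C}[P])$ is an $\widetilde{H}^m(\mathbf{k})$-module by Theorem \ref{mainTHM}, and multiplication by $x^{\pm\rho}$ preserves $\mathbb{C}[P]$ because $\rho \in P$; hence the composite $\mathcal{T}_i$ preserves $\mathbb{C}[P]$. (Equivalently, one may cite the last sentence of Proposition \ref{tau_pol} directly, since $x^{\rho^m}T_i^{-1}x^{-\rho^m} \in \widetilde{H}^m(\mathbf{k})$.)

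For $\mathbb{C}[\Lambda]$ the point to watch is that $\rho$, hence $x^{\pm\rho}$, need not lie in $\Lambda$ (and $x^{\pm\rho^m}$ need not lie in $\mathbb{C}[\Lambda_0]$), so the $\widetilde{H}^m(\mathbf{k},\Lambda_0)$-statement of Proposition \ref{tau_pol} does not apply verbatim. This is exactly what Lemma \ref{pi_conj_stab} is designed for: applied with $h = T_i^{-1} \in H(\mathbf{k}) \subseteq \widetilde{H}^m(\mathbf{k},\Lambda_0)$ and $\mu = -\rho \in P$, it shows that $x^{\rho}\,\pi(T_i^{-1})\,x^{-\rho}$ preserves $\mathbb{C}[\Lambda]$, hence so does $\mathcal{T}_i = -\mathbf{k}_i\, x^{\rho}\,\pi(T_i^{-1})\,x^{-\rho}$.

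There is no real obstacle here; the only subtlety is recognizing that the one-line citation of Proposition \ref{tau_pol} covers $\mathbb{C}[P]$ but not $\mathbb{C}[\Lambda]$, so the reformulation through $\pi$ together with Lemma \ref{pi_conj_stab} is needed for the lattice case. A more computational alternative would be to expand $\mathcal{T}_i(x^\lambda)$ using \eqref{altT} and the explicit formula for $\sigma(s_i)x^\lambda$ extracted from the proof of Theorem \ref{mWrep}, and to verify that the denominator $1-x^{\alpha_i^m}$ cancels; this works but is considerably more tedious.
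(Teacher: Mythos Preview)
Your proof is correct and follows essentially the same route as the paper, which also cites \eqref{CGP_DL}, Proposition~\ref{tau_pol}, and Lemma~\ref{pi_conj_stab}. You have simply unpacked the key identity $\mathcal{T}_i = -\mathbf{k}_i\, x^{\rho}\,\pi(T_i^{-1})\, x^{-\rho}$ and explained explicitly why Lemma~\ref{pi_conj_stab} (rather than the $\Lambda$-statement of Proposition~\ref{tau_pol}) is what is needed for the lattice case, since $\rho^m$ need not lie in $\Lambda_0$.
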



\begin{proof}
Follows from \eqref{CGP_DL}, Proposition \ref{tau_pol}, and Lemma
\ref{pi_conj_stab}.
\end{proof}

The realization \eqref{CGP_DL} of the $\mathcal{T}_{i}$'s through the
$\widetilde{H}^{m}_{\textup{loc}}(\mathbf{k})$-representation $\tau$ directly
imply that the metaplectic Demazure-Lusztig operators $\mathcal{T}_{i}$
($i=1,\ldots,r$) satisfy the braid relations of $W$ and the quadratic Hecke
relations
\begin{equation}
\label{HeckecalT}\mathcal{T}_{i}^{2}=(\mathbf{k}_{i}^{2}-1)\mathcal{T}%
_{i}+\mathbf{k}_{i}^{2},\qquad i=1,\ldots,r
\end{equation}
(this in particular provides an alternative and uniform proof of the braid
relations and quadratic Hecke relations of the metaplectic Demazure-Lusztig
operators in \cite{CGP}, see \cite[Prop. 5(ii)]{CGP} and formula (13) in
\cite[Prop. 7]{CGP}). For $w=s_{i_{1}}\cdots s_{i_{r}}\in W$ a reduced
expression we write $\mathcal{T}_{w}:=\mathcal{T}_{i_{1}}\cdots\mathcal{T}%
_{i_{r}}\in\textup{End}(\mathbb{C}(P))$.


\begin{rema}
Using that $\sigma(s_{i})f=x^{\rho-\rho^{m}}\pi_{\text{loc}}(s_{i}%
)(x^{\rho^{m}-\rho}f)$ we have
\begin{equation}
\label{tau_pi}\tau(x^{\rho^{m}-\rho}T_{i}x^{\rho-\rho^{m}})f=\mathbf{k}%
_{i}f+\mathbf{k}_{i}^{-1}c_{i}(\pi_{\text{loc}}(s_{i})f-f)\\
=\pi_{\text{loc}}(\varphi^{-1}(T_{i}))f
\end{equation}
for $f\in\mathbb{C}(P)$. Hence
\begin{equation}
\label{altcalT}\mathcal{T}_{i}=-\mathbf{k}_{i}\pi_{\text{loc}}(\varphi
^{-1}(x^{\rho}T_{i}^{-1}x^{-\rho})).
\end{equation}

\end{rema}


\begin{rema}
Let $\Lambda$ be a lattice in $E$ satisfying $Q\subseteq\Lambda\subseteq P$.
The localization isomorphism $\varphi$ restricts to an isomorphism of
algebras
\[
\varphi_{\Lambda}: W\ltimes\mathbb{C}(\Lambda_{0})\overset{\sim
}{\longrightarrow} \widetilde{H}^{m}_{\text{loc}}(\mathbf{k},\Lambda_{0}),
\]
with $\widetilde{H}^{m}_{\text{loc}}(\mathbf{k},\Lambda_{0})$ the subalgebra
of $\widetilde{H}^{m}_{\text{loc}}(\mathbf{k})$ generated by $H(\mathbf{k})$
and the quotient field $\mathbb{C}(\Lambda_{0})$ of $\mathbb{C}[\Lambda_{0}]$.
The algebra map
\[
\tau_{\Lambda}: \widetilde{H}^{m}_{\text{loc}}(\mathbf{k},\Lambda
_{0})\rightarrow\textup{End}(\mathbb{C}(\Lambda))
\]
defined by $\tau_{\Lambda}:=\sigma_{\Lambda}\circ\varphi_{\Lambda}^{-1}$ then
satisfies
\[%
\begin{split}
\tau_{\Lambda}(T_{i})(fx^{\lambda}) & =\mathbf{k}_{i}fx^{\lambda}+
\mathbf{k}_{i}^{-1}c_{i}\bigl(\sigma_{\Lambda}(s_{i})(fx^{\lambda
})-fx^{\lambda}\bigr),\\
\tau_{\Lambda}(g)(fx^{\lambda}) & =gfx^{\lambda}%
\end{split}
\]
for $f,g\in\mathbb{C}(\Lambda_{0})$, $\lambda\in\Lambda$ and $i=1,\ldots,r$,
where $\Lambda_{0}:=\Lambda\cap P^{m}$. Note that
\[
\tau_{\Lambda}(X)=\tau(X)|_{\mathbb{C}(\Lambda)},\qquad X\in\widetilde{H}%
^{m}_{\text{loc}}(\mathbf{k},\Lambda_{0}).
\]
The metaplectic Demazure-Lusztig operators $\mathcal{T}_{i}$ then restrict to
the following linear operators on $\mathbb{C}(\Lambda)$,
\[
\mathcal{T}_{i}|_{\mathbb{C}(\Lambda)}=-\mathbf{k}_{i}\tau_{\Lambda
}(\textup{Ad}_{x^{\rho^{m}}}(T_{i}^{-1})),
\]
where $\textup{Ad}_{x^{\rho^{m}}}\in\textup{Aut}(\widetilde{H}^{m}%
_{\text{loc}}(\mathbf{k},\Lambda_{0}))$ is the restriction of the inner
automorphism $X\mapsto x^{\rho^{m}}Xx^{-\rho^{m}}$ of $\widetilde{H}%
^{m}_{\text{loc}}(\mathbf{k}) $ to the subalgebra $\widetilde{H}%
^{m}_{\text{loc}}(\mathbf{k},\Lambda_{0})$.
\end{rema}


We now use these results to generalize results from \cite[§3]{CGP} to the case
of unequal Hecke and representation parameters. We first analyze certain
symmetrizer and antisymmetrizer elements in $\widetilde{H}^{m}_{\text{loc}%
}(\mathbf{k})$. We then use the metaplectic Weyl group representation $\sigma$
to obtain generalizations of the formula \cite[Thm. 16]{CGP} for the
metaplectic Whittaker function.

Recall from Section \ref{ssec:eaha} that $\mathbf{k} : \Phi\rightarrow
\mathbb{C}^{\times}$ is a $W$-invariant function and $\mathbf{k}_{j} :=
\mathbf{k}_{a_{j}^{m}}$ for $j = 0, \dots, r$. For $w = s_{i_{1}} \cdots
s_{i_{m}}\in W$ a reduced word ($1\leq i_{j}\leq r$), we define
\begin{equation}
\label{k_gen}\mathbf{k}_{w} := \prod_{j=1}^{m} \mathbf{k}_{i_{j}}.
\end{equation}
Note that, in the special case that $\mathbf{k}$ is a constant function (the
equal Hecke algebra parameters case), we have $\mathbf{k}_{w} = \mathbf{k}%
^{\ell(w)}$. Also let
\begin{equation}
\label{k_norm}W(\mathbf{k}^{\pm2}) := \sum_{w \in W} \mathbf{k}_{w}^{\pm2}.
\end{equation}

Define the symmetrizer $\mathbf{1}_{+}\in H(\mathbf{k})$ and antisymmetrizer
$\mathbf{1}_{-}\in H(\mathbf{k})$ by
\begin{equation}
\label{symm}\mathbf{1}_{+} := \sum_{w \in W} \mathbf{k}_{w} T_{w},
\qquad\mathbf{1}_{-} := \sum_{w \in W} (-1)^{l(w)} \mathbf{k}_{w}^{-1} T_{w}.
\end{equation}
It is well known (see e.g., \cite[1.19.1]{Ka} and \cite{Ch}) that the
symmetrizer $\mathbf{1}_{+}$ and antisymmetrizer $\mathbf{1}_{-}$ satisfy the
following properties.

\begin{prop}
We have the following identities in $H(\mathbf{k})$:
\begin{equation}
\label{identidem}%
\begin{split}
T_{i} \mathbf{1}_{\pm}  & = \pm\mathbf{k}_{i}^{\pm1} \mathbf{1}_{\pm} =
\mathbf{1}_{\pm} T_{i},\\
\mathbf{1}_{\pm}^{2}  & = W(\mathbf{k}^{\pm2})\mathbf{1}_{\pm}%
\end{split}
\end{equation}
for $i=1,\ldots,r$.
\end{prop}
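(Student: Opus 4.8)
The plan is to prove the two families of identities \eqref{identidem} by straightforward manipulations using the defining quadratic and braid relations of $H(\mathbf{k})$ together with the multiplicativity of $w\mapsto \mathbf{k}_w$ and the basis property of the $T_w$.

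First I would establish the one-sided relations $T_i\mathbf{1}_{\pm}=\pm\mathbf{k}_i^{\pm 1}\mathbf{1}_{\pm}$. For $\mathbf{1}_+=\sum_{w}\mathbf{k}_wT_w$, split the sum over $W$ according to whether $\ell(s_iw)=\ell(w)+1$ or $\ell(s_iw)=\ell(w)-1$; the map $w\mapsto s_iw$ is an involution pairing these two sets. When $\ell(s_iw)=\ell(w)+1$ one has $T_iT_w=T_{s_iw}$, and when $\ell(s_iw)=\ell(w)-1$ the quadratic relation gives $T_iT_w=(\mathbf{k}_i-\mathbf{k}_i^{-1})T_w+T_{s_iw}$. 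Pairing the $w$ with $\ell$ increasing against $s_iw$ with $\ell$ decreasing, and using $\mathbf{k}_{s_iw}=\mathbf{k}_i\mathbf{k}_w$ when $\ell(s_iw)=\ell(w)+1$, the contribution of each such pair to $T_i\mathbf{1}_+$ collapses to $\mathbf{k}_i\bigl(\mathbf{k}_wT_w+\mathbf{k}_{s_iw}T_{s_iw}\bigr)$; summing over pairs yields $\mathbf{k}_i\mathbf{1}_+$. The antisymmetrizer case is identical but with the sign $(-1)^{\ell(w)}$ and $\mathbf{k}_w^{-1}$ in place of $\mathbf{k}_w$, producing the factor $-\mathbf{k}_i^{-1}$. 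The right-handed identities $\mathbf{1}_{\pm}T_i=\pm\mathbf{k}_i^{\pm 1}\mathbf{1}_{\pm}$ follow by the same argument applied on the other side, or more cleanly by invoking the anti-involution of $H(\mathbf{k})$ fixing each $T_i$, under which $\mathbf{1}_{\pm}$ is invariant (since $T_w\mapsto T_{w^{-1}}$ and $\mathbf{k}_{w^{-1}}=\mathbf{k}_w$, $\ell(w^{-1})=\ell(w)$).

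Next I would deduce the idempotent-type relations $\mathbf{1}_{\pm}^2=W(\mathbf{k}^{\pm 2})\mathbf{1}_{\pm}$ by induction on length: writing $\mathbf{1}_+=\sum_w\mathbf{k}_wT_w$ and multiplying on the left by a reduced word for $w$, repeated application of $T_i\mathbf{1}_+=\mathbf{k}_i\mathbf{1}_+$ gives $T_w\mathbf{1}_+=\mathbf{k}_w\mathbf{1}_+$ for all $w\in W$; hence $\mathbf{1}_+^2=\sum_w\mathbf{k}_w T_w\mathbf{1}_+=\sum_w\mathbf{k}_w^2\,\mathbf{1}_+=W(\mathbf{k}^2)\mathbf{1}_+$. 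The same computation with $(-1)^{\ell(w)}\mathbf{k}_w^{-1}$ and $T_w\mathbf{1}_-=(-1)^{\ell(w)}\mathbf{k}_w^{-1}\mathbf{1}_-$ gives $\mathbf{1}_-^2=W(\mathbf{k}^{-2})\mathbf{1}_-$.

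The only real subtlety — and the step I expect to be the main obstacle, though it is a mild one — is the bookkeeping in the pairing argument for the one-sided relation: one must be careful that $w\mapsto s_iw$ genuinely interchanges the two length-parity classes without fixed points, that the $\mathbf{k}$-weights transform correctly across the pair (using $W$-invariance of $\mathbf{k}$ so that $\mathbf{k}_{\alpha_i}$ is well-defined and $\mathbf{k}_{s_iw}=\mathbf{k}_i\mathbf{k}_w$ on the increasing side), and that the cross-term $(\mathbf{k}_i-\mathbf{k}_i^{-1})T_w$ coming from the quadratic relation is exactly what is needed to turn $\mathbf{k}_w T_w$ into $\mathbf{k}_i\mathbf{k}_w T_w$. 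Once this is set up cleanly the rest is routine, and since these facts are standard (the excerpt itself cites \cite[1.19.1]{Ka} and \cite{Ch}) I would keep the write-up brief, possibly just indicating the pairing and the induction and referring to the literature for full details.
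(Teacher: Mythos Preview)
Your proof is correct and is precisely the standard argument for these identities. Note that the paper does not actually supply a proof of this proposition: it introduces the statement with ``It is well known (see e.g., \cite[1.19.1]{Ka} and \cite{Ch})'' and leaves it at that. Your pairing argument for $T_i\mathbf{1}_{\pm}=\pm\mathbf{k}_i^{\pm1}\mathbf{1}_{\pm}$, the anti-involution to pass to the right-handed version, and the induction giving $T_w\mathbf{1}_{\pm}=(\pm1)^{\ell(w)}\mathbf{k}_w^{\pm1}\mathbf{1}_{\pm}$ and hence the idempotent relation, are exactly what one finds in the cited references. Since the paper is content to cite the literature, your final suggestion---to record the pairing and the induction briefly and point to \cite{Ka,Ch}---is entirely in keeping with the paper's own treatment.
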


The equations $T_{i}\mathbf{1}_{\pm}=\pm\mathbf{k}_{i}^{\pm1}\mathbf{1}_{\pm}$
for $i=1,\ldots,r$ characterize $\mathbf{1}_{\pm}$ as an element in
$H(\mathbf{k})$ up to a multiplicative constant. It follows from this
observation that
\begin{equation}
\label{symmalt}\mathbf{1}_{+} = \mathbf{k}_{w_{0}}^{2}\sum_{w\in W}%
\mathbf{k}_{w}^{-1}T_{w^{-1}}^{-1}, \qquad\mathbf{1}_{-} = \mathbf{k}_{w_{0}%
}^{-2}\sum_{w\in W}(-1)^{\ell(w)}\mathbf{k}_{w}T_{w^{-1}}^{-1}.
\end{equation}
The multiplicative constant is determined by comparing the coefficient of
$T_{w_{0}}$ in the linear expansion in terms of the basis $\{T_{w}\}_{w\in W}$
of $H(\mathbf{k})$.

Recall the definition \eqref{cfunction} of the $c$-functions $c_{\alpha}$
($\alpha\in\Phi$).


\begin{prop}
\label{fact} We have the following identities in $W\ltimes\mathbb{C}(P^{m})$:
\begin{align*}
\varphi^{-1}(\mathbf{1}_{+})  & = \Bigl(\sum_{w\in W} w\Bigr) \prod_{\alpha
\in\Phi^{+}}c_{-\alpha},\\
\varphi^{-1}(\mathbf{1}_{-})  & = \mathbf{k}_{w_{0}}^{-2}\Bigl(\prod
_{\alpha\in\Phi^{+}}c_{\alpha}\Bigr)\sum_{w\in W} (-1)^{\ell(w)}w.
\end{align*}

\end{prop}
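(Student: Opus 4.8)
The plan is to push everything through the localization isomorphism $\varphi: W\ltimes\mathbb{C}(P^m)\overset{\sim}{\longrightarrow}\widetilde{H}^m_{\text{loc}}(\mathbf{k})$ of Theorem~\ref{localization_thm} and to characterise the elements $X_{\pm}:=\varphi^{-1}(\mathbf{1}_{\pm})\in W\ltimes\mathbb{C}(P^m)$ by the eigenvalue equations $\varphi^{-1}(T_i)X_{\pm}=\pm\mathbf{k}_i^{\pm1}X_{\pm}$ ($i=1,\dots,r$) coming from \eqref{identidem}. Using the identity $\varphi^{-1}(T_i)=\mathbf{k}_i+\mathbf{k}_i^{-1}c_i(s_i-1)=\mathbf{k}_i^{-1}c_i\,s_i+(\mathbf{k}_i-\mathbf{k}_i^{-1}c_i)$ recorded after Theorem~\ref{localization_thm}, together with the fact that $c_i$ is a unit in $\mathbb{C}(P^m)$, the plus equations simplify to $s_iX_+=X_+$ for all $i$, while a one-line manipulation of \eqref{cfunction} giving $\mathbf{k}_i^{-1}c_i-\mathbf{k}_i-\mathbf{k}_i^{-1}=-\mathbf{k}_i^{-1}c_{-\alpha_i}$ turns the minus equations into $c_i\,s_iX_-=-c_{-\alpha_i}X_-$ for all $i$.

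Next I would check that the two claimed right-hand sides satisfy these relations. For $\mathbf{1}_+$ this is immediate: left multiplication by $s_i$ permutes $\sum_{w\in W}w$, hence fixes $\bigl(\sum_{w}w\bigr)\prod_{\alpha\in\Phi^+}c_{-\alpha}$. For $\mathbf{1}_-$, moving the scalar factor to the left of the group element via $s_i\cdot g=s_i(g)\cdot s_i$ and using $w(c_\alpha)=c_{w\alpha}$ (so that $s_i\bigl(\prod_{\alpha\in\Phi^+}c_\alpha\bigr)=\tfrac{c_{-\alpha_i}}{c_{\alpha_i}}\prod_{\alpha\in\Phi^+}c_\alpha$, only the $\alpha_i$-factor changing), together with $s_i\sum_{w}(-1)^{\ell(w)}w=-\sum_{w}(-1)^{\ell(w)}w$ (the length parity flipping under $w\mapsto s_iw$), one sees that applying $c_i\,s_i$ to $\mathbf{k}_{w_0}^{-2}\bigl(\prod_{\alpha\in\Phi^+}c_\alpha\bigr)\sum_{w}(-1)^{\ell(w)}w$ reproduces $-c_{-\alpha_i}$ times it. So both proposed elements lie in the relevant solution space.

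It remains to fix the overall scalar, which—as the text anticipates—is done by comparing the coefficient of the group element $w_0$. For this I would establish a triangularity lemma: expanding $\varphi^{-1}(T_w)=\prod_j\varphi^{-1}(T_{i_j})$ along a reduced word $w=s_{i_1}\cdots s_{i_\ell}$ and selecting group elements shows $\varphi^{-1}(T_w)\in\bigoplus_{v\le w}\mathbb{C}(P^m)\,v$, with the coefficient of $v=w$ equal to $\mathbf{k}_w^{-1}\prod_{j=1}^\ell c_{s_{i_1}\cdots s_{i_{j-1}}\alpha_{i_j}}$; since $\{s_{i_1}\cdots s_{i_{j-1}}\alpha_{i_j}\}_j=\Phi^+\cap w\Phi^-$, this is $\mathbf{k}_{w_0}^{-1}\prod_{\alpha\in\Phi^+}c_\alpha$ when $w=w_0$, and no $\varphi^{-1}(T_w)$ with $w\neq w_0$ reaches $w_0$ in Bruhat order. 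Substituting into $\mathbf{1}_+=\sum_w\mathbf{k}_wT_w$ and $\mathbf{1}_-=\sum_w(-1)^{\ell(w)}\mathbf{k}_w^{-1}T_w$ from \eqref{symm} gives $w_0$-coefficients $\prod_{\alpha\in\Phi^+}c_\alpha$ and $(-1)^{\ell(w_0)}\mathbf{k}_{w_0}^{-2}\prod_{\alpha\in\Phi^+}c_\alpha$ respectively, which a direct inspection matches against the $w_0$-terms of the two claimed expressions (for the plus case, the $w_0$-term of $\bigl(\sum_w w\bigr)\prod_{\alpha\in\Phi^+}c_{-\alpha}$ is $\bigl(\prod_{\alpha\in\Phi^+}c_{-w_0\alpha}\bigr)w_0=\bigl(\prod_{\alpha\in\Phi^+}c_\alpha\bigr)w_0$). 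To deduce equality from this, I would note that any solution $X=\sum_w g_w\,w$ of either eigenvalue system is rigid: the relations force $g_{s_iw}=s_i(g_w)$ in the plus case and $g_{s_iw}=-\tfrac{c_{\alpha_i}}{c_{-\alpha_i}}s_i(g_w)$ in the minus case, both invertible operations, so every $g_w$ is determined by $g_{w_0}$; hence agreement in the $w_0$-coefficient forces $X_{\pm}$ to equal the claimed elements.

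The main obstacle I anticipate is the triangularity lemma—in particular the bookkeeping of the $c$-functions accumulated as the reflections $s_{i_j}$ are commuted past one another, where $w(c_\alpha)=c_{w\alpha}$ and the standard identification of $\{s_{i_1}\cdots s_{i_{j-1}}\alpha_{i_j}\}_j$ with $\Phi^+\cap w\Phi^-$ are used—together with verifying that the coefficient of $v=w$ is independent of the chosen reduced word (which it must be, $\varphi^{-1}(T_w)$ being well defined). Everything else reduces to short computations inside $W\ltimes\mathbb{C}(P^m)$.
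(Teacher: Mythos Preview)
Your argument is correct. The paper itself does not give a proof; it simply cites Macdonald's book \cite[(5.5.14)]{Ma}. What you have written is a self-contained proof following the standard route: characterise $\varphi^{-1}(\mathbf{1}_\pm)$ by the transported eigenvalue relations \eqref{identidem}, verify that the claimed expressions satisfy them, and then pin down the single free parameter by matching the coefficient of $w_0$ via the Bruhat-triangular expansion of $\varphi^{-1}(T_w)$. All the computations you outline check out, including the identity $\mathbf{k}_i^{-1}c_i-\mathbf{k}_i-\mathbf{k}_i^{-1}=-\mathbf{k}_i^{-1}c_{-\alpha_i}$, the leading coefficient $\mathbf{k}_{w_0}^{-1}\prod_{\alpha\in\Phi^+}c_\alpha$ of $\varphi^{-1}(T_{w_0})$, and the rigidity step (the recursions $g_{s_iw}=s_i(g_w)$ and $g_{s_iw}=-\tfrac{c_{\alpha_i}}{c_{-\alpha_i}}s_i(g_w)$ are invertible, so equality of the $w_0$-coefficients forces equality of all coefficients; no separate consistency check is needed since both sides are already known to be solutions).

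The only comment is organisational: your anticipated ``main obstacle'' (independence of the leading coefficient from the reduced word) is a non-issue, since $\varphi^{-1}(T_w)$ is well defined and you are merely reading off the coefficient of $w$ in a basis expansion; the identification $\{s_{i_1}\cdots s_{i_{j-1}}\alpha_{i_j}\}_j=\Phi^+\cap w\Phi^-$ is the standard inversion-set description. So the proof goes through without difficulty, and what you gain over the paper's citation is a transparent, self-contained derivation.
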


\begin{proof}
See \cite[(5.5.14)]{Ma}.
\end{proof}

We now obtain the following main result of this section.

\begin{thm}
\label{WhittakerDL} We have the following identity of operators in
$\mathrm{End}(\mathbb{C}(P))$:
\[%
\begin{split}
\sum_{w \in W} \mathcal{T}_{w}  & = \Bigl(\prod_{\alpha\in\Phi^{+}}c_{\alpha
}\Bigr)x^{\rho^{m}}\Bigl( \sum_{w\in W}(-1)^{\ell(w)}\sigma(w)\Bigr)x^{-\rho
^{m}}\\
& = \Bigl(\prod_{\alpha\in\Phi^{+}}c_{\alpha}\Bigr)\sum_{w\in W}
(-1)^{\ell(w)}\Bigl(\prod_{\alpha\in\Phi(w^{-1})}x^{\alpha^{m}}\Bigr)\sigma
(w).
\end{split}
\]
In particular, for $\lambda\in P^{+}$ we have
\[
\widetilde{\mathcal{W}}_{\lambda}=\sum_{w\in W}\mathcal{T}_{w}(x^{w_{0}%
\lambda}).
\]

\end{thm}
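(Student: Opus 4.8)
The plan is to collapse $\sum_{w\in W}\mathcal{T}_w$ into the image under $\tau$ of a single element of $\widetilde{H}^m_{\textup{loc}}(\mathbf{k})$, recognize that element as a conjugate of the antisymmetrizer $\mathbf{1}_-$, and then transport everything to $W\ltimes\mathbb{C}(P^m)$ via $\varphi^{-1}$ and Proposition~\ref{fact}. First I would record, for each $w\in W$, the identity
\[
\mathcal{T}_w=(-1)^{\ell(w)}\mathbf{k}_w\,\tau\bigl(x^{\rho^m}T_{w^{-1}}^{-1}x^{-\rho^m}\bigr).
\]
To see this, pick a reduced word $w=s_{i_1}\cdots s_{i_\ell}$; since $\mathcal{T}_i=-\mathbf{k}_i\tau(x^{\rho^m}T_i^{-1}x^{-\rho^m})$ by \eqref{CGP_DL} and both $\tau$ and the inner automorphism $X\mapsto x^{\rho^m}Xx^{-\rho^m}$ are algebra maps, one has $\mathcal{T}_w=(-1)^{\ell(w)}\mathbf{k}_w\,\tau\bigl(x^{\rho^m}T_{i_1}^{-1}\cdots T_{i_\ell}^{-1}x^{-\rho^m}\bigr)$, and $s_{i_\ell}\cdots s_{i_1}$ being a reduced word for $w^{-1}$ gives $T_{i_1}^{-1}\cdots T_{i_\ell}^{-1}=(T_{i_\ell}\cdots T_{i_1})^{-1}=T_{w^{-1}}^{-1}$. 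Summing over $w$ and invoking the second identity of \eqref{symmalt} in the form $\sum_{w\in W}(-1)^{\ell(w)}\mathbf{k}_wT_{w^{-1}}^{-1}=\mathbf{k}_{w_0}^2\mathbf{1}_-$ yields
\[
\sum_{w\in W}\mathcal{T}_w=\mathbf{k}_{w_0}^2\,\tau\bigl(x^{\rho^m}\mathbf{1}_-x^{-\rho^m}\bigr).
\]

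Next I would unwind $\tau=\sigma\circ\varphi^{-1}$. Since $\varphi^{-1}$ restricts to the identity on $\mathbb{C}(P^m)$ and $\rho^m\in P^m$, it follows that $\varphi^{-1}\bigl(x^{\rho^m}\mathbf{1}_-x^{-\rho^m}\bigr)=x^{\rho^m}\varphi^{-1}(\mathbf{1}_-)x^{-\rho^m}$, and Proposition~\ref{fact} gives $\varphi^{-1}(\mathbf{1}_-)=\mathbf{k}_{w_0}^{-2}\bigl(\prod_{\alpha\in\Phi^+}c_\alpha\bigr)\sum_{w\in W}(-1)^{\ell(w)}w$. The scalars $\mathbf{k}_{w_0}^{\pm2}$ cancel; since $\prod_{\alpha\in\Phi^+}c_\alpha$ lies in $\mathbb{C}(P^m)$ and hence commutes with $x^{\rho^m}$, and since $\sigma$ acts on $\mathbb{C}(P^m)$ by multiplication, applying $\sigma$ produces the first asserted identity
\[
\sum_{w\in W}\mathcal{T}_w=\Bigl(\prod_{\alpha\in\Phi^+}c_\alpha\Bigr)x^{\rho^m}\Bigl(\sum_{w\in W}(-1)^{\ell(w)}\sigma(w)\Bigr)x^{-\rho^m}.
\]

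For the second form I would compute, inside $W\ltimes\mathbb{C}(P^m)$, that $x^{\rho^m}\,\sigma(w)\,x^{-\rho^m}=\sigma\bigl(x^{\rho^m}wx^{-\rho^m}\bigr)=\sigma\bigl(x^{\rho^m-w\rho^m}\bigr)\sigma(w)$, and then use the standard identity $\rho^m-w\rho^m=\sum_{\alpha\in\Phi(w^{-1})}\alpha^m$ (which reduces, via $W$-invariance of $m$, to $\rho-w\rho=\sum_{\alpha\in\Phi^+\cap w\Phi^-}\alpha$) to rewrite the exponent as a product over $\Phi(w^{-1})$. This gives the second displayed formula of the theorem. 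Finally, evaluating that formula at $x^{w_0\lambda}$ reproduces verbatim the defining expression for $\widetilde{\mathcal{W}}_\lambda$, which establishes the last assertion $\widetilde{\mathcal{W}}_\lambda=\sum_{w\in W}\mathcal{T}_w(x^{w_0\lambda})$.

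I do not expect a real obstacle, as the whole argument is a manipulation of the algebra homomorphisms $\tau$, $\sigma$, $\varphi^{-1}$ and conjugation by $x^{\rho^m}$, together with Proposition~\ref{fact} and \eqref{symmalt}. The two places that require care are the word-reversal bookkeeping yielding $T_{w^{-1}}^{-1}$ weighted by exactly $\mathbf{k}_w$ (so that the sum matches $\mathbf{1}_-$ as written in \eqref{symmalt}), and keeping the conjugation by $x^{\rho^m}$ in \eqref{CGP_DL} distinct from the conjugation by $x^{\rho-\rho^m}$ already built into the definition of $\sigma$ in \eqref{sigma_piloc}; confusing these would shift all the exponents.
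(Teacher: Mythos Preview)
Your proposal is correct and follows exactly the paper's approach: expand $\mathcal{T}_w$ via \eqref{CGP_DL}, sum to recognize $\mathbf{k}_{w_0}^2\tau(x^{\rho^m}\mathbf{1}_-x^{-\rho^m})$ using \eqref{symmalt}, apply $\tau=\sigma\circ\varphi^{-1}$ together with Proposition~\ref{fact}, and then use $\rho^m-w\rho^m=\sum_{\alpha\in\Phi(w^{-1})}\alpha^m$ to pass to the second formula. Your write-up simply spells out in more detail the bookkeeping (word reversal for $T_{w^{-1}}^{-1}$, commuting $\prod c_\alpha$ past $x^{\rho^m}$) that the paper leaves implicit.
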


\begin{proof}
By \eqref{CGP_DL} and \eqref{symmalt} we have
\begin{align*}
\sum_{w \in W} \mathcal{T}_{w}  & = \sum_{w\in W}(-1)^{\ell(w)}\mathbf{k}_{w}
\tau\bigl(x^{\rho^{m}}T_{w^{-1}}^{-1}x^{-\rho^{m}}\bigr)\\
& = \mathbf{k}_{w_{0}}^{2}\tau\bigl(x^{\rho^{m}}\mathbf{1}_{-}x^{-\rho^{m}%
}\bigr).
\end{align*}
The first formula now follows directly using $\tau=\sigma\circ\varphi^{-1}$
and the previous proposition. The second formula follows from the observation
that
\[
\rho^{m}-w\rho^{m}=\sum_{\alpha\in\Phi(w^{-1})}\alpha^{m}%
\]
for $w\in W$.
\end{proof}


\begin{cor}
Let $\Lambda\subset E$ be a lattice satisfying $Q\subseteq\Lambda\subseteq P$.
Then $\widetilde{\mathcal{W}}_{\lambda}\in\mathbb{C}[\Lambda]$ for $\lambda
\in\Lambda^{+}:=P^{+}\cap\Lambda$.
\end{cor}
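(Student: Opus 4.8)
The plan is to combine Theorem~\ref{WhittakerDL} with Lemma~\ref{polpres} (in its $\Lambda$-version), so that the only real work is checking that the input $x^{w_0\lambda}$ lies in $\mathbb{C}[\Lambda]$ and that the operators $\mathcal{T}_i$ do not take us out of $\mathbb{C}[\Lambda]$. First I would recall from Theorem~\ref{WhittakerDL} that for $\lambda\in P^+$,
\[
\widetilde{\mathcal{W}}_\lambda=\sum_{w\in W}\mathcal{T}_w\bigl(x^{w_0\lambda}\bigr),
\]
and that each $\mathcal{T}_w$ is a product $\mathcal{T}_{i_1}\cdots\mathcal{T}_{i_\ell}$ for a reduced word $w=s_{i_1}\cdots s_{i_\ell}$. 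So it suffices to observe that each $\mathcal{T}_i$ maps $\mathbb{C}[\Lambda]$ into itself.

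Next I would invoke Lemma~\ref{polpres}, which states precisely that $\mathcal{T}_i$ stabilizes $\mathbb{C}[\Lambda]$ for $i=1,\ldots,r$, for any lattice $\Lambda$ with $Q\subseteq\Lambda\subseteq P$ (this in turn rests on the realization \eqref{CGP_DL}, Proposition~\ref{tau_pol}, and Lemma~\ref{pi_conj_stab}). Hence $\mathcal{T}_w$ stabilizes $\mathbb{C}[\Lambda]$ for every $w\in W$, and therefore so does the sum $\sum_{w\in W}\mathcal{T}_w$.

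Finally, I would check the base point: for $\lambda\in\Lambda^+=P^+\cap\Lambda$ we have $\lambda\in\Lambda$, and since $\Lambda$ is $W$-stable (being a lattice sandwiched between $Q$ and $P$, it is automatically $W$-invariant), $w_0\lambda\in\Lambda$ as well, so $x^{w_0\lambda}\in\mathbb{C}[\Lambda]$. Applying the stabilizing operator $\sum_{w\in W}\mathcal{T}_w$ to this element then yields $\widetilde{\mathcal{W}}_\lambda\in\mathbb{C}[\Lambda]$, as desired. I do not anticipate a genuine obstacle here; the only point requiring a moment's care is the $W$-stability of $\Lambda$ (hence of $\Lambda^+$ and of $w_0\lambda$), which has already been noted in the text following Corollary~\ref{CGcor}, and the fact that Lemma~\ref{polpres} is stated for an arbitrary such $\Lambda$ rather than only for $P$ itself.
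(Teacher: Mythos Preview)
Your proof is correct and follows exactly the paper's approach: the paper's own proof is the single line ``This follows from Lemma~\ref{polpres} and the previous theorem,'' and you have simply unpacked this by noting that $x^{w_0\lambda}\in\mathbb{C}[\Lambda]$ (using the $W$-stability of $\Lambda$) and that each $\mathcal{T}_w$ preserves $\mathbb{C}[\Lambda]$ by Lemma~\ref{polpres}.
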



\begin{proof}
This follows from Lemma \ref{polpres} and the previous theorem.
\end{proof}


\begin{rema}
\label{metHL} Note that the symmetric variant $\tau(\mathbf{1}_{+}%
)(x^{\lambda})$ of $\widetilde{\mathcal{W}}_{\lambda}$ for $\lambda\in
\Lambda^{+}$ may also be of interest. These are polynomials (again by Lemma
\ref{polpres}), symmetric with respect to the Chinta-Gunnells $W$-action
$\sigma$ (by Proposition \ref{fact}(a)), which reduce for $m\equiv1$ to
Hall-Littlewood polynomials \cite[§10]{Mac} (by e.g., Remark \ref{standard_W}).
\end{rema}


\begin{rema}
In the equal Hecke and parameter case, $\widetilde{\mathcal{W}}_{\lambda}$
admits the interpretation as a metaplectic Whittaker function attached to a
metaplectic cover of a reductive group over a nonarchimedean local field, see
\cite[Thm. 16]{CGP}. It is a natural open problem what the corresponding
representation theoretic interpretation is of $\widetilde{\mathcal{W}%
}_{\lambda}$ in the unequal Hecke and/or representation parameter case.
\end{rema}


\section{Metaplectic polynomials}

\label{sect5}

In this section we present metaplectic variants of $\textup{GL}_{r}$ Macdonald
polynomials. Full proofs and additional results will be provided in a
forthcoming paper, in which we will also introduce the metaplectic polynomials
for arbitrary root systems.


\subsection{The metaplectic data $(n,\mathbf{Q})$}

Let $r\geq2$. Fix the standard orthonormal basis $\{e_{i}\}_{i=1}^{r}$ of
$\mathbb{R}^{r}$. The associated scalar product is denoted by $\bigl(\cdot
,\cdot\bigr)$ and the corresponding norm by $\|\cdot\|$. Then
\[
\Phi=\{\epsilon_{i}-\epsilon_{j}\}_{1\leq i\not =j\leq r}%
\]
is the root system of type $\textup{A}_{r-1}$, with basis $\Delta$ of simple
roots and associated set $\Phi^{+}$ of positive roots given by
\[
\Delta:=\{\alpha_{1},\ldots,\alpha_{r-1}\}\subset\Phi^{+}=\{\epsilon
_{i}-\epsilon_{j}\}_{1\leq i<j\leq r}%
\]
with $\alpha_{i}:=\epsilon_{i}-\epsilon_{i+1}$. The associated highest root is
$\theta=\epsilon_{1}-\epsilon_{r}$. The root lattice is $Q:=\mathbb{Z}\Phi$,
which is contained in the $\textup{GL}_{r}$ weight lattice $\bigoplus
_{i=1}^{r}\mathbb{Z}\epsilon_{i}\simeq\mathbb{Z}^{r}$. The Weyl group is the
symmetric group $S_{r}$ in $r$ letters.

Let $\mathbf{Q}: \mathbb{Z}^{r}\rightarrow\mathbb{Q}$ be a non-zero $S_{r}%
$-invariant quadratic form which is integral-valued on $Q$. Then
\[
\mathbf{Q}(\gamma)=\frac{\kappa}{2}\|\gamma\|^{2}\qquad\forall\,\gamma\in Q
\]
for some nonzero integer $\kappa=\kappa_{\mathbf{Q}}$ (we suppress the
dependence of $\kappa$ on $\mathbf{Q}$ if it is clear from the context). In
particular, $\mathbf{Q}(\alpha)=\kappa$ for all $\alpha\in\Phi$. Write
\[
\mathbf{B}(\lambda,\mu):=\mathbf{Q}(\lambda+\mu)-\mathbf{Q}(\lambda
)-\mathbf{Q}(\mu),\qquad\lambda,\mu\in\mathbb{Z}^{r}%
\]
for the associated symmetric $S_{r}$-invariant bilinear form $\mathbf{B}:
\mathbb{Z}^{r}\times\mathbb{Z}^{r}\rightarrow\mathbb{Q}$. By the $S_{r}%
$-invariance of $\mathbf{B}$ we then have
\begin{equation}
\label{Bexpl}\mathbf{B}(\lambda,\alpha)=\kappa\bigl(\lambda,\alpha^{\vee
}\bigr)\qquad\forall\, \lambda\in\mathbb{Z}^{r},\,\,\, \forall\,\alpha\in\Phi
\end{equation}
(in the present context $\alpha^{\vee}=\alpha$ for $\alpha\in\Phi$, but we
distinguish them in anticipation of the results for arbitrary root systems in
our followup paper). In particular, $\mathbf{B}(\lambda,\alpha)\in\mathbb{Z}$
for $\lambda\in\mathbb{Z}^{r}$ and $\alpha\in\Phi$.

Fix $n\in\mathbb{Z}_{>0}$ once and for all. Given a quadratic form
$\mathbf{Q}$ as in the previous paragraph with associated normalisation scalar
$\kappa=\kappa_{\mathbf{Q}}$, we define positive integers $\kappa^{\prime
}=\kappa^{\prime}_{\mathbf{Q}}$ and $m=m_{\mathbf{Q}}$ by
\begin{equation}
\label{defm}\kappa^{\prime}:=\textup{gcd}(n,\kappa),\qquad m:=\frac{n}%
{\kappa^{\prime}}=\frac{n}{\textup{gcd}(n,\kappa)}.
\end{equation}
Note that $m=n/\textup{gcd}(n,\mathbf{Q}(\alpha))$ for all $\alpha\in\Phi$, in
particular $\Phi^{m}=m\Phi$. Furthermore,
\[
m\mathbb{Z}^{r}\subseteq\{\lambda\in\mathbb{Z}^{r} \,\, | \,\, \mathbf{B}%
(\lambda,\alpha)\equiv0 \quad\textup{mod } n\,\,\, \forall\, \alpha\in\Phi\}.
\]

Set $\mathbb{F}:=\mathbb{C}(q,k)$. We consider the following field extensions
$\mathbb{F}\subseteq\mathbb{K}^{(n)}$,
\[
\mathbb{K}^{(n)}:=
\begin{cases}
\mathbb{F}(g_{1}^{(n)},\ldots,g_{\lfloor\frac{n}{2}\rfloor}^{(n)}),\qquad &
\hbox{ if } n \hbox{ is odd},\\
\mathbb{F}(g_{1}^{(n)},\ldots,g_{\frac{n}{2}-1}^{(n)}),\qquad & \hbox{ if } n
\hbox{ is even},
\end{cases}
\]
which should be read as $\mathbb{K}^{(n)}=\mathbb{F}$ for $n=1,2$.

For $n\geq1$ we now define representation parameters $g_{j}^{(n)}\in
\mathbb{K}^{(n)}$ for all integers $j\in\mathbb{Z}$ as follows (it depends on
a choice of a sign $\epsilon\in\{\pm1\}$ when $n$ is even, which we fix once
and for all). We set $g_{0}^{(n)}:=-1$. The representation parameters
$g_{j}^{(n)}$ for indices $\frac{n}{2}<j<n$ are defined by $g_{j}%
^{(n)}:=k^{-2}(g_{n-j}^{(n)})^{-1}$. For $n$ even, we set $g_{\frac{n}{2}%
}^{(n)}:=\epsilon^{-1}k^{-1}$. Finally, the representation parameters
$g_{j}^{(n)}\in\mathbb{K}^{(n)}$ are extended to indices $j\in\mathbb{Z}$ by
$g_{j}^{(n)}:=g_{r_{n}(j)}^{(n)}$, with $r_{n}(j)\in\{0,\ldots,n-1\}$ the
remainder modulo $n$. Note that $g_{j}^{(n)}g_{n-j}^{(n)}=k^{-2}$ in
$\mathbb{K}^{(n)}$ for all $j\in\mathbb{Z}\setminus n\mathbb{Z}$, and
$g_{j}^{(n)}=-1$ if $j\in n\mathbb{Z}$.

\begin{lem}
\label{iota} There exists a unique $\mathbb{F}$-homomorphism $\iota_{\kappa}:
\mathbb{K}^{(m)}\hookrightarrow\mathbb{K}^{(n)}$ mapping $g_{j}^{(m)}$ to
$g_{\kappa j}^{(n)}$ for all $j\in\mathbb{Z}$.
\end{lem}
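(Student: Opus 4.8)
The plan is to realize $\mathbb{K}^{(n)}$ as the rational function field over $\mathbb{F}$ in the free parameters $g_1^{(n)},\ldots,g_{\lceil n/2\rceil-1}^{(n)}$, and to exploit that every $g_j^{(n)}$ is determined by the recursions $g_j^{(n)}=-1$ (if $n\mid j$), $g_j^{(n)}=g_{r_n(j)}^{(n)}$, $g_j^{(n)}g_{n-j}^{(n)}=k^{-2}$ (if $n\nmid j$), and $g_{n/2}^{(n)}=\epsilon^{-1}k^{-1}$ (if $n$ is even), and likewise for the $g_j^{(m)}$. Uniqueness of $\iota_\kappa$ is immediate: the $g_j^{(m)}$ generate $\mathbb{K}^{(m)}$ over $\mathbb{F}$ and a ring homomorphism out of a field is injective. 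For existence I set $\kappa'=\gcd(n,\kappa)$ and $d=\kappa/\kappa'$, so that $n=m\kappa'$, $\kappa=d\kappa'$, $\gcd(d,m)=1$, and I note that $m$ even forces $d$ odd.

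First I would define $\iota_\kappa$ on the polynomial ring $\mathbb{F}[g_1^{(m)},\ldots,g_{\lceil m/2\rceil-1}^{(m)}]$ by $g_a^{(m)}\mapsto g_{\kappa a}^{(n)}$ and show this map is injective, so that it extends to the fraction field $\mathbb{K}^{(m)}$; injectivity amounts to algebraic independence over $\mathbb{F}$ of $g_\kappa^{(n)},g_{2\kappa}^{(n)},\ldots,g_{(\lceil m/2\rceil-1)\kappa}^{(n)}$. The key computation here is $r_n(\kappa a)=\kappa'\,r_m(da)$, so $g_{\kappa a}^{(n)}=g_{\kappa' b_a}^{(n)}$ with $b_a:=r_m(da)\in\{1,\ldots,m-1\}$; since $\gcd(d,m)=1$ and $d^{-1}$ is odd when $m$ is even, the ``bad'' value $b_a=m/2$ occurs only at $a\equiv m/2\pmod m$, which lies outside $\{1,\ldots,\lceil m/2\rceil-1\}$. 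Hence each $g_{\kappa a}^{(n)}$ equals a free generator $g_{c(a)}^{(n)}$ or $k^{-2}(g_{c(a)}^{(n)})^{-1}$ with $c(a):=\kappa'\min(b_a,m-b_a)\in\{1,\ldots,\lceil n/2\rceil-1\}$, and $a\mapsto c(a)$ is injective because $c(a)=c(a')$ forces $a'\equiv\pm a\pmod m$, while $2\le a+a'<m$ rules out the minus sign. Since distinct free generators are algebraically independent over $\mathbb{F}$ and $x\mapsto k^{-2}/x$ is an $\mathbb{F}$-automorphism of $\mathbb{F}(x)$, the required algebraic independence follows.

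It then remains to upgrade $\iota_\kappa(g_a^{(m)})=g_{\kappa a}^{(n)}$ for $1\le a\le\lceil m/2\rceil-1$ to all $j\in\mathbb{Z}$. For this I would check that $(g_{\kappa j}^{(n)})_{j\in\mathbb{Z}}$ obeys the same recursions in $j$ as $(g_j^{(m)})_{j\in\mathbb{Z}}$: one has $g_{\kappa\cdot 0}^{(n)}=-1$; $g_{\kappa(j+m)}^{(n)}=g_{\kappa j}^{(n)}$ because $\kappa m=nd\equiv0\pmod n$; $g_{\kappa j}^{(n)}g_{\kappa(m-j)}^{(n)}=k^{-2}$ for $m\nmid j$ because $\kappa(m-j)\equiv-\kappa j\pmod n$ and $m\nmid j\Rightarrow n\nmid\kappa j$ (using $\gcd(d,m)=1$); and, for $m$ even, $g_{\kappa m/2}^{(n)}=\epsilon^{-1}k^{-1}$ because $d$ odd gives $\kappa m/2\equiv n/2\pmod n$. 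Applying the homomorphism $\iota_\kappa$ to the analogous recursions defining the $g_j^{(m)}$ and comparing yields $\iota_\kappa(g_j^{(m)})=g_{\kappa j}^{(n)}$ for every $j$. The degenerate cases $m\in\{1,2\}$, where $\mathbb{K}^{(m)}=\mathbb{F}$ and $\iota_\kappa=\mathrm{id}_\mathbb{F}$, are handled by the same relation checks, with the algebraic-independence step vacuous. The main obstacle is precisely this algebraic-independence step: one must guarantee that the parameters $g_{\kappa a}^{(n)}$ reduce to \emph{distinct} free generators of $\mathbb{K}^{(n)}$ and never collapse onto the constant $g_{n/2}^{(n)}=\epsilon^{-1}k^{-1}\in\mathbb{F}$, which is the combinatorial content that the bad index $m/2$ and the coincidences $a'=m-a$ fall outside the range $\{1,\ldots,\lceil m/2\rceil-1\}$.
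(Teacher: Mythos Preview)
Your proof is correct and complete. The paper's own proof consists of the single sentence ``This is an easy check,'' so your argument is precisely the verification the authors leave to the reader; the combinatorial points you isolate (that $\gcd(d,m)=1$ with $d$ odd when $m$ is even, that the map $a\mapsto c(a)$ is injective on $\{1,\ldots,\lceil m/2\rceil-1\}$, and that the sequence $(g_{\kappa j}^{(n)})_j$ satisfies the same defining recursions as $(g_j^{(m)})_j$) are exactly what make the check go through.
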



\begin{proof}
This is an easy check.
\end{proof}

We write $\mathbb{K}^{(n,\kappa)}$ for the image of $\mathbb{K}^{(m)}=
\mathbb{K}^{(n/\kappa^{\prime})}$ under $\iota_{\kappa}: \mathbb{K}%
^{(m)}\hookrightarrow\mathbb{K}^{(n)}$. It is the subfield of $\mathbb{K}%
^{(n)}$ obtained by adjoining the elements $g_{\kappa^{\prime}j}^{(n)}$ to
$\mathbb{F}$ for $1\leq j<\frac{m}{2}$. Note that $\mathbb{K}^{(n,1)}%
=\mathbb{K}^{(n)}$.

We finish this subsection by introducing metaplectic analogues $(p_{j})_{0\leq
j<r}$ of multiplicity functions. For $\lambda\in\mathbb{Z}^{r}$ write
$\overline{\lambda}^{m}=\lambda+m\mathbb{Z}^{r}$ for the class of $\lambda$ in
$\bigl(\mathbb{Z}/m\mathbb{Z}\bigr)^{r}$. Then we define
\[
p_{j}: \bigl(\mathbb{Z}/m\mathbb{Z}\bigr)^{r}\rightarrow\mathbb{K}^{(n)}%
\qquad(0\leq j<r)
\]
by
\[
p_{i}(\overline{\lambda}^{m}):=-kg_{-\mathbf{B}(\lambda,\alpha_{i})}^{(n)},
\qquad p_{0}(\overline{\lambda}^{m}):=-kg_{\mathbf{B}(\lambda,\theta)}^{(n)}%
\]
for $1\leq i<r$. By \eqref{Bexpl}, the dependence of $p_{j}$ on the
metaplectic data is a dependence on $(n,\kappa)$ (and $\epsilon$ if $n$ is
even). If we want to emphasize it, we will write $p_{j}=p_{j}^{(n,\kappa)}$
(we always suppress $\epsilon$ from the notations). Note that by
\eqref{Bexpl}, the functions $p_{j}$ take values in the subfield
$\mathbb{K}^{(n,\kappa)}$ of $\mathbb{K}^{(n)}$.


\subsection{The double affine Hecke algebra $\mathbb{H}^{(m)}$}\label{sect:DAHA}

Consider the extended affine Weyl group $W^{(m)}:=S_{r}\ltimes m\mathbb{Z}%
^{r}$. We denote its elements by $\sigma\tau(\nu)$ ($\sigma\in S_{r}$, $\nu\in
m\mathbb{Z}^{r}$). We may also view $W^{(m)}$ as the subgroup of affine linear
transformations of $\mathbb{R}^{r}$ of the form $\sigma\tau(\nu)$ ($\sigma\in
S_{r}$, $\nu\in m\mathbb{Z}^{r}$), acting on $\mathbb{R}^{r}$ by
\[
(\sigma\tau(\nu))(v):=\sigma(v+\nu),\qquad v\in\mathbb{R}^{r},\,\, \sigma\in
S_{r},\,\, \nu\in m\mathbb{Z}^{r}.
\]
View $\mathbb{R}^{r}\oplus\mathbb{R}$ as the space of real-valued affine
linear functionals on $\mathbb{R}^{r}$ by associating to $(v,x)\in
\mathbb{R}^{r}\oplus\mathbb{R}$ the affine linear functional $\mathbb{R}%
^{r}\ni u\mapsto(v,u)+x$. The extended affine Weyl group $W^{(m)}$ acts on
$\mathbb{R}^{r}\oplus\mathbb{R}$ by
\[
(\sigma\tau(\nu))(v,x):=(\sigma v, x-(\nu,v)).
\]

The affine root system is
\[
\widetilde{\Phi}^{(m)}=\{(m\alpha,tm^{2})\,\,\, | \,\,\, \alpha\in\Phi,\,\,\,
t\in\mathbb{Z} \} \subset\mathbb{R}^{r}\oplus\mathbb{R},
\]
which is stabilized by $W^{(m)}$. We identify $m\Phi$ with the subset of
affine linear roots $\{(m\alpha,0)\}_{\alpha\in\Phi}$ in $\widetilde{\Phi
}^{(m)}$. For $a=(m\alpha,tm^{2})\in\widetilde{\Phi}^{(m)}$ let $s_{a}\in
W^{(m)}$ be the orthogonal reflection in the affine hyperplane $a^{-1}(0)$.
Then
\[
s_{a}=\tau(-tm\alpha^{\vee})s_{\alpha}\in W^{(m)}%
\]
with $s_{\alpha}\in S_{r}$ the orthogonal reflection in the hyperplane
$\alpha^{\perp}\subset\mathbb{R}^{r}$.

We take
\[
\{b_{0}^{(m)},b_{1}^{(m)},\ldots,b_{r-1}^{(m)}\}:=\{(-m\theta,m^{2}%
),m\alpha_{1},\ldots,m\alpha_{r-1}\}
\]
as the set of simple roots of $\widetilde{\Phi}^{(m)}$ and write $s_{j}%
^{(m)}:=s_{b_{j}}\in W^{(m)}$ ($j=0,\ldots,r-1$) for the associated simple
reflections. Then
\[
s_{0}^{(m)}=\tau(m\theta^{\vee})s_{\theta},
\]
and $s_{i}^{(m)}=s_{\alpha_{i}}\in S_{r}$ ($1\leq i<r$) are the simple
neighbouring transpositions. Since the latter do not depend on $m$, we will
write $s_{i}=s_{i}^{(m)}$ for $1\leq i<r$.

The subgroup $W^{(m)}_{\textup{Cox}}:=\langle s_{0}^{(m)},\ldots,s_{r-1}%
^{(m)}\rangle$ of $W^{(m)}$ is the affine Weyl group of type
$\widehat{\textup{A}}_{r-1}$. Its defining relations in terms of the simple
reflections are $(s_{j}^{(m)})^{2}=1$ and the type $\widehat{A}_{r-1}$ braid
relations. Then $W^{(m)}\simeq\mathbb{Z}\ltimes W^{(m)}_{\textup{Cox}} $, with
$1\in\mathbb{Z}$ acting on $W^{(m)}_{\textup{Cox}}$ by $s_{j}^{(m)}\mapsto
s_{j+1}^{(m)}$ (indices modulo $r$), which corresponds under the isomorphism
$W^{(m)}\simeq\mathbb{Z}\ltimes W^{(m)}_{\textup{Cox}} $ with the extended
affine Weyl group element
\[
\omega^{(m)}:=s_{1}s_{2}\cdots s_{r-1}\tau(m\epsilon_{r}).
\]
Note that $\omega^{(m)}(b_{j}^{(m)})=b_{j+1}^{(m)}$ for $0\leq j<r$ (with the
indices taken modulo $r$).

We write $x^{v}$ ($v\in\mathbb{R}^{r}$) for the canonical basis of the group
algebra $\mathbb{F}[\mathbb{R}^{r}]$ of $\mathbb{R}^{r}$ over $\mathbb{F}$, so
that $x^{u}x^{v}=x^{u+v}$ and $x^{0}=1$. We write for $c\in\mathbb{Z}$ and
$v\in\mathbb{R}^{r}$,
\[
x^{(v,c)}=q^{c}x^{v}\in\mathbb{F}[\mathbb{R}^{r}].
\]
Let $\mathbb{F}[x^{\pm1}]$ be the $\mathbb{F}$-algebra of Laurent polynomials
in $x_{1},\ldots,x_{r}$, viewed as the $\mathbb{F}$-subalgebra of
$\mathbb{F}[\mathbb{R}^{r}]$ generated by $\mathbb{Z}^{r}\subset\mathbb{R}%
^{r}$ via $x_{i}:=x^{\epsilon_{i}}$ ($1\leq i\leq r$). The extended affine
Weyl group $W^{(m)}$ acts by $\mathbb{F}$-algebra automorphisms on
$\mathbb{F}[x^{\pm1}] $ by
\begin{equation}
\label{v1}w\bigl(x^{(\lambda,c)}\bigr):=x^{w(\lambda,c)}%
\end{equation}
for $w\in W^{(m)}$ and $(\lambda,c)\in\mathbb{Z}^{r}\oplus\mathbb{Z}$. In
particular, for $\lambda\in\mathbb{Z}^{r}$, $\sigma\in S_{r}$ and $\nu
\in\mathbb{Z}^{r}$,
\begin{equation}
\label{v2}(\sigma\tau(\nu))x^{\lambda}=q^{-(\nu,\lambda)}x^{\sigma\lambda}.
\end{equation}

For $\lambda\in\mathbb{Z}^{r}$ we thus have
\[
x^{\omega^{(m)}\lambda}=q^{-m\lambda_{r}}x^{s_{1}\cdots s_{r-1}\lambda},
\qquad x^{s_{0}^{(m)}\lambda}=q^{m(\lambda,\theta^{\vee})}x^{s_{\theta}%
\lambda}%
\]
and $x^{b_{0}^{(m)}}=q^{m^{2}}x^{-m\theta}$.

\begin{defi}\label{def:DAHA}
The $\textup{GL}_{r}$ double affine Hecke algebra $\mathbb{H}^{(m)}$ is the
unital associative $\mathbb{F}$-algebra generated by $T_{0},\ldots,T_{r-1}$,
$\omega^{\pm1}$ and $\mathbb{F}[x^{\pm m}]:=\mathbb{F}[x_{1}^{\pm m}%
,\ldots,x_{r}^{\pm m}]$ with defining relations:

\begin{enumerate}
\item The type $\widehat{\textup{A}}_{r-1}$ braid relations for $T_{1}%
,\ldots,T_{r-1}$.

\item The Hecke relations $(T_{j}-k)(T_{j}+k^{-1})=0$.

\item $\omega\omega^{-1}=1=\omega^{-1}\omega$ and $\omega T_{j}=T_{j+1}\omega$
(indices modulo $r$).

\item The cross relations
\begin{equation}
\label{crossdaha}T_{j}x^{\lambda}-x^{s^{(m)}_{j}\lambda}\,T_{j}=(k-k^{-1}%
)\left( \frac{x^{\lambda}-x^{s_{j}^{(m)}\lambda}} {1-x^{b_{j}^{(m)}}}\right)
,\qquad\omega x^{\lambda}=x^{\omega^{(m)}\lambda}\omega
\end{equation}
for $\lambda\in m\mathbb{Z}^{r}$ and $0\leq j<r$.
\end{enumerate}
\end{defi}

Consider the subalgebras $\widetilde{H}^{(m)}_{Y}:=\mathbb{F}\langle
T_{0},\ldots,T_{r-1},\omega^{\pm1}\rangle$ and $H^{(m)}=\mathbb{F}\langle
T_{1},\ldots,T_{r-1}\rangle$ of $\mathbb{H}^{(m)}$. The subalgebra $H^{(m)}$
is the finite Hecke algebra (of type $A_{r-1}$). Define
\begin{equation}
\label{eqref:Y-opers}Y^{m\epsilon_{i}}:=T_{i-1}^{-1}\cdots T_{1}^{-1}\omega
T_{r-1}\cdots T_{i}, \qquad i=1,\ldots,r.
\end{equation}
They pairwise commute and are invertible in $\widetilde{H}^{(m)}_{Y}$. The
assignment
\[
x^{\nu}\mapsto Y^{\nu}:=Y_{1}^{\nu_{1}}\cdots Y^{\nu_{r}}_{r},\qquad\nu\in
m\mathbb{Z}^{r}%
\]
defines an injective algebra map $\mathbb{F}[x^{\pm m}]\hookrightarrow
H^{(m)}$, whose image we denote by $\mathbb{F}[Y^{\pm m}]$. The multiplication
map
\[
H^{(m)}\otimes_{\mathbb{F}}\mathbb{F}[Y^{\pm m}]\rightarrow\widetilde{H}%
^{(m)}_{Y},\qquad h\otimes Y^{\lambda}\mapsto hY^{\lambda}%
\]
s a $\mathbb{F}$-linear isomorphism. The defining relations of $\widetilde{H}%
^{(m)}_{Y}$ in terms of the subalgebras $H^{(m)}$ and $\mathbb{F}[Y^{\pm m}]$
are the Bernstein-Zelevinsky cross relations
\[
T_{i}Y^{\mu}-Y^{s_{i}\mu}T_{i}=(k-k^{-1})\left( \frac{Y^{\mu}-Y^{s_{i}\mu}%
}{1-Y^{-m\alpha_{i}}}\right)
\]
for $1\leq i<r$ and $\mu\in m\mathbb{Z}^{r}$.


\begin{rema}
Let $\delta: \mathbb{H}^{(m)}\rightarrow\mathbb{H}^{(m)}$ be the $\mathbb{F}%
$-linear antialgebra isomorphism satisfying $\delta(T_{i}):=T_{i}$ ($1\leq
i<r$), $\delta(Y^{\mu}):=x^{-\mu}$ and $\delta(x^{\mu}):=Y^{-\mu}$ for $\mu\in
m\mathbb{Z}^{r}$. It provides an anti-isomorphism between the subalgebras
\[
\widetilde{H}_{X}^{(m)}:= \mathbb{F}\langle T_{1},\ldots,T_{r-1},x_{1}^{\pm
m},\ldots,x_{r}^{\pm m}\rangle
\]
and $\widetilde{H}_{Y}^{(m)}$. The corresponding Coxeter type presentation of
$\widetilde{H}^{(m)}_{X}$ thus involves $\delta(T_{0})$ and $\delta
(\omega^{-1})$ as the generator for the simple affine root and the element
corresponding to generator of the affine Dynkin diagram automorphisms. Note
that
\[
\delta(\omega^{-1})=x_{1}^{m}T_{1}\cdots T_{r-1}%
\]
and that $\delta(\omega^{-1})Y^{\lambda}=q^{m\lambda_{r}}Y^{s_{1}\cdots
s_{r-1}\lambda}\delta(\omega^{-1})$ for $\lambda\in m\mathbb{Z}^{r}$ in
$\mathbb{H}^{(m)}$.
\end{rema}



\subsection{The metaplectic basic representation}\label{sect:metbasic}

Set $t_{m}(s):=s-r_{m}(s)\in m\mathbb{Z}$. The metaplectic divided difference
operators $\overline{\nabla}_{j}^{(m)}$ ($0\leq j<r$) are the $\mathbb{F}%
$-linear operators on $\mathbb{F}[x^{\pm1}]$ defined by
\[
\overline{\nabla}_{i}^{(m)}(x^{\lambda}):=\left( \frac{1-x^{-t_{m}%
((\lambda,\alpha_{i}^{\vee}))\alpha_{i}}}{1-x^{m\alpha_{i}}} \right)
x^{\lambda}%
\]
and
\[
\overline{\nabla}_{0}^{(m)}(x^{\lambda}):= \left( \frac{1-q^{-mt_{m}%
(-(\lambda,\theta^{\vee}))}x^{t_{m}(-(\lambda,\theta^{\vee}))\theta}}
{1-q^{m^{2}}x^{-m\theta}}\right) x^{\lambda}%
\]
for $\lambda\in\mathbb{Z}^{r}$. Note that the $\overline{\nabla}_{j}%
^{(m)}|_{\mathbb{F}[x^{\pm m}]}$ are the usual divided-difference operators,
\[
\overline{\nabla}_{j}^{(m)}(x^{\lambda})=\frac{x^{\lambda}-x^{s_{j}%
^{(m)}\lambda}}{1-x^{b_{j}^{(m)}}},\qquad\lambda\in m\mathbb{Z}^{r}%
\]
for $0\leq j<r$. For a field extension $\mathbb{F}\subseteq\mathbb{K}$, the
$\mathbb{K}$-linear extension of $\overline{\nabla}^{(m)}_{j}$ to a linear
operator on $\mathbb{K}[x^{\pm1}]$ will also be denoted by $\overline{\nabla
}_{j}^{(m)}$.

Recall that the metaplectic data $(n,\mathbf{Q})$ provide us with the nonzero
integer $\kappa:=\mathbf{Q}(\alpha)$ ($\alpha\in\Phi$), from which
$\kappa^{\prime}$ and $m$ are determined by \eqref{defm}. In addition, we have
fixed a sign $\epsilon\in\{\pm1\}$ in case $n$ is even through the definition
of the representation parameter $g_{\frac{n}{2}}^{(n)}=\epsilon^{-1}k^{-1}$.


\begin{thm}
\label{GLrmetabasic} The formulas
\begin{equation}
\label{assignments}%
\begin{split}
\widehat{\pi}^{(n,\kappa)}(T_{j})x^{\lambda} & :=(k-k^{-1})\overline{\nabla
}_{j}^{m}(x^{\lambda})+ p_{j}^{(n,\kappa)}(\overline{\lambda}^{m}%
)x^{s_{j}^{(m)}\lambda},\\
\widehat{\pi}^{(n,\kappa)}(x^{\mu})x^{\lambda} & :=x^{\lambda+\mu},\\
\widehat{\pi}^{(n,\kappa)}(\omega)x^{\lambda} & :=x^{\omega^{(m)}\lambda}%
\end{split}
\end{equation}
for $j=0,\ldots,r-1$, $\mu\in m\mathbb{Z}^{r}$ and $\lambda\in\mathbb{Z}^{r}$
turn $\mathbb{K}^{(n,\kappa)}[x^{\pm1}]$ into a left $\mathbb{H}^{(m)}%
$-module. We write $\widehat{\pi}^{(m)}:=\widehat{\pi}^{(m,1)}$ (we suppress
here the dependence on $\epsilon$).
\end{thm}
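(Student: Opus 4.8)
The plan is to deduce Theorem \ref{GLrmetabasic} from the already-established affine Hecke algebra result (Theorem \ref{mainTHM}) by reducing the check to two kinds of relations: those internal to the $X$-side subalgebra $\widetilde{H}_X^{(m)}$, and the $\omega$-relations. First I would observe that the $\textup{GL}_r$ double affine Hecke algebra $\mathbb{H}^{(m)}$ is generated by $\widetilde{H}_X^{(m)}=\mathbb{F}\langle T_1,\dots,T_{r-1},x_1^{\pm m},\dots,x_r^{\pm m}\rangle$ together with $\omega^{\pm 1}$, and that the defining relations split into: (a) the relations holding inside $\widetilde{H}_X^{(m)}$ — namely the finite $\textup{A}_{r-1}$ braid relations for $T_1,\dots,T_{r-1}$, the Hecke relations, and the Bernstein–Zelevinsky cross relations between $T_i$ ($1\le i<r$) and $x^\mu$ ($\mu\in m\mathbb{Z}^r$); (b) the relations involving $\omega$: $\omega\omega^{-1}=1$, $\omega T_j = T_{j+1}\omega$ for $0\le j<r$ (indices mod $r$), and $\omega x^\lambda = x^{\omega^{(m)}\lambda}\omega$ for $\lambda\in m\mathbb{Z}^r$; (c) the relation $(T_0-k)(T_0+k^{-1})=0$ together with the $T_0$-cross relation with $x^\lambda$. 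This is simply the Coxeter-type / Bernstein–Zelevinsky hybrid presentation of $\mathbb{H}^{(m)}$ recorded in Definition \ref{def:DAHA}.

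For group (a), I would note that $\widetilde{H}_X^{(m)}$ is precisely (the $\textup{GL}_r$ version of) the extended affine Hecke algebra $\widetilde{H}^m(\mathbf{k})$ of the previous sections, with the $T_i$ ($1\le i<r$) as finite generators and the $x^{\pm m}$ generating the abelian part via the Bernstein–Zelevinsky presentation; comparing \eqref{crossdaha} for $1\le j<r$ with \eqref{BL1} and the formula \eqref{nablabar} for $\overline{\nabla}_i$ shows that $\widehat{\pi}^{(n,\kappa)}$ restricted to $\widetilde{H}_X^{(m)}$ is exactly the metaplectic representation $\pi$ of Theorem \ref{mainTHM} (here $m(\alpha)=m$ for all $\alpha$, and $p_i(\overline{\lambda}^m)$ matches $\mathbf{p}_i(\overline{\lambda})$ of \eqref{p} by \eqref{Bexpl} and the definition of the $g_j^{(n)}$). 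So all relations in (a) hold for free. What remains is genuinely new: the operator $\widehat{\pi}^{(n,\kappa)}(\omega)$ given by the twisted cyclic shift $x^\lambda\mapsto x^{\omega^{(m)}\lambda}=q^{-m\lambda_r}x^{s_1\cdots s_{r-1}\lambda}$, and the affine generator $T_0$, whose divided-difference part $\overline{\nabla}_0^{(m)}$ and reflection part $p_0(\overline{\lambda}^m)x^{s_0^{(m)}\lambda}$ involve $q$.

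The strategy for the $\omega$ and $T_0$ relations is conjugation/transport of structure. Since $\omega^{(m)}=s_1\cdots s_{r-1}\tau(m\epsilon_r)$ satisfies $\omega^{(m)}(b_j^{(m)})=b_{j+1}^{(m)}$ (indices mod $r$) and hence $s_{j+1}^{(m)}=\omega^{(m)}s_j^{(m)}(\omega^{(m)})^{-1}$ in $W^{(m)}$, I expect $\widehat{\pi}^{(n,\kappa)}(\omega)\,\widehat{\pi}^{(n,\kappa)}(T_j)\,\widehat{\pi}^{(n,\kappa)}(\omega)^{-1}=\widehat{\pi}^{(n,\kappa)}(T_{j+1})$ to follow by a direct check on the basis $x^\lambda$: the shift conjugates $\overline{\nabla}_j^{(m)}$ to $\overline{\nabla}_{j+1}^{(m)}$ (using that $t_m$ and the hyperplane combinatorics are $\omega^{(m)}$-equivariant, and that $q$-powers bookkeep correctly because $\omega^{(m)}$ acts on $\mathbb{R}^r\oplus\mathbb{R}$), and it permutes the multiplicity functions $p_j$ cyclically, which one verifies from the identities $\mathbf{B}(\omega^{(m)}\lambda,b_{j+1}^{(m)}) \equiv \mathbf{B}(\lambda,b_j^{(m)})$ type relations together with $p_0,\dots,p_{r-1}$ being exactly the values $-kg^{(n)}_{-\mathbf{B}(\lambda,\,\cdot\,)}$ at the simple affine roots. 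The relation $\omega x^\lambda=x^{\omega^{(m)}\lambda}\omega$ on $m\mathbb{Z}^r$ is immediate from the definitions. Finally, $(T_0-k)(T_0+k^{-1})=0$ and the $T_0$-cross relation can be obtained either by the same conjugation from the $T_1$-relations (writing $T_0 = \omega^{a}T_i\omega^{-a}$ for a suitable power matching $b_0^{(m)}$ to $b_i^{(m)}$ — more precisely $b_0^{(m)}=\omega^{(m)}(b_{r-1}^{(m)})$, so $T_0 = \widehat{\pi}(\omega)T_{r-1}\widehat{\pi}(\omega)^{-1}$ already forces the quadratic and cross relations for $T_0$ once we know them for $T_{r-1}$ and the $\omega$-relations), or directly.

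The main obstacle I anticipate is the careful $q$-bookkeeping in the $\omega$–$T_0$ interplay: one must track the affine-functional action $(\sigma\tau(\nu))(v,x)=(\sigma v, x-(\nu,v))$ through the definition of $\overline{\nabla}_0^{(m)}$ (which contains $q^{-mt_m(-(\lambda,\theta^\vee))}x^{\cdots}$ and $q^{m^2}x^{-m\theta}$ in the denominator) and verify that conjugation by the twisted shift $\widehat{\pi}^{(n,\kappa)}(\omega)$ really does intertwine $\overline{\nabla}_{r-1}^{(m)}$ with $\overline{\nabla}_0^{(m)}$, including the subtle point that $t_m$ applied to $-(\lambda,\theta^\vee)$ versus to $(\lambda,\alpha_i^\vee)$ transforms correctly under the affine translation part of $\omega^{(m)}$. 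Once the $\omega$-conjugation statement $\widehat{\pi}^{(n,\kappa)}(\omega)T_j\widehat{\pi}^{(n,\kappa)}(\omega)^{-1}=T_{j+1}$ is in hand for all $j$ mod $r$, everything else — the $T_0$ Hecke relation, the $T_0$ cross relation, and the affine braid relations among $T_0,\dots,T_{r-1}$ — reduces to the already-known finite/affine Hecke relations inside $\widetilde{H}_X^{(m)}$ supplied by Theorem \ref{mainTHM}, so I would present the $\omega$-intertwining computation as the heart of the proof and treat the rest as a short deduction.
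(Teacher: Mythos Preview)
Your approach is essentially the same as the paper's: invoke Theorem \ref{mainTHM} for the $\widetilde{H}_X^{(m)}$-relations, then verify the two intertwining identities
\[
p_{j+1}^{(n,\kappa)}\bigl(\overline{s_1\cdots s_{r-1}\lambda}^{\,m}\bigr)=p_j^{(n,\kappa)}(\overline{\lambda}^{\,m}),
\qquad
\widehat{\pi}(\omega)\,\overline{\nabla}_j^{(m)}=\overline{\nabla}_{j+1}^{(m)}\,\widehat{\pi}(\omega),
\]
from which $\widehat{\pi}(\omega)\widehat{\pi}(T_j)=\widehat{\pi}(T_{j+1})\widehat{\pi}(\omega)$ follows and all $T_0$-relations drop out by conjugation.

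There is one genuine technical point you have glossed over. Theorem \ref{mainTHM} is stated for the weight lattice $P$ of the root system $A_{r-1}$, and $P\neq\mathbb{Z}^r$: the fundamental weights $\varpi_i$ have fractional coordinates in $\mathbb{R}^r$, so you cannot literally identify $\mathbb{K}^{(n,\kappa)}[x^{\pm 1}]=\mathbb{K}^{(n,\kappa)}[\mathbb{Z}^r]$ with $\mathbb{K}^{(n,\kappa)}[P]$ and invoke Theorem \ref{mainTHM} directly. The paper handles this by passing to the larger lattice $\Lambda:=\mathbb{Z}^r+\mathbb{Z}\mathfrak{n}$ with $\mathfrak{n}=\frac{1}{r}(\epsilon_1+\cdots+\epsilon_r)$, which contains both $P$ and $\mathbb{Z}^r$; since $\mathfrak{n}$ is $S_r$-fixed and orthogonal to all roots, the representation on $\mathbb{K}^{(n,\kappa)}[P]$ extends trivially to $\mathbb{K}^{(n,\kappa)}[\Lambda]$ via $\widehat{\pi}(h)(x^{s\mathfrak{n}}x^\lambda):=x^{s\mathfrak{n}}\widehat{\pi}(h)x^\lambda$, after adjoining $q^{1/r}$. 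The formulas \eqref{assignments2} then hold for all $\lambda\in\Lambda$, in particular for $\lambda\in\mathbb{Z}^r$. This is a short fix, but without it the appeal to Theorem \ref{mainTHM} is not quite licit.
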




\begin{proof}
Set
\[
\Lambda:=\mathbb{Z}^{r}+\mathbb{Z}\mathfrak{n}%
\]
with $\mathfrak{n}:=\frac{1}{r}(\epsilon_{1}+\cdots+\epsilon_{r})$. Note that
$\Lambda$ contains the weight lattice $P$ of $\Phi$. The quadratic form
$\mathbf{Q}$ has a unique extension to a $\mathbb{Q}$-valued $S_{r}$-invariant
quadratic form $\Lambda\rightarrow\mathbb{Q}$, which we also denote by
$\mathbf{Q}$. We write $\mathbf{B}: \Lambda\times\Lambda\rightarrow\mathbb{Q}$
for the associated symmetric $S_{r}$-invariant bilinear form.

Adjoin a $r$th root $q^{\frac{1}{r}}$ of $q$ to $\mathbb{K}^{(n,\kappa)}$ (by
abuse of notation, we denote it again by $\mathbb{K}^{(n,\kappa)}$). Let
$\mathbb{K}^{(n,\kappa)}[\Lambda]$ be the $\mathbb{K}^{(n,\kappa)}$-submodule
of $\mathbb{K}^{(n,\kappa)}[\mathbb{R}^{r}]:=\mathbb{K}^{(n,\kappa)}%
\otimes_{\mathbb{F}}\mathbb{F}[\mathbb{R}^{r}]$ generated by $x^{\lambda}$
($\lambda\in\Lambda$). The extended affine Weyl group $W^{(m)}$ acts on
$\mathbb{K}^{(n,\kappa)}[\Lambda]$ by $\mathbb{K}^{(n,\kappa)}$-algebra
automorphisms by the formula \eqref{v1}. The $\mathbb{K}^{(n,\kappa)}%
$-subalgebra $\mathbb{K}^{(n,\kappa)}[P]$ generated by $x^{\lambda}$
($\lambda\in P$) is a $W^{(m)}$-submodule.

By Theorem \ref{mainTHM} (which holds true with formal parameters), the first
two lines of \eqref{assignments},
\begin{equation}
\label{assignments2}%
\begin{split}
\widehat{\pi}(T_{i})x^{\lambda} & :=(k-k^{-1})\overline{\nabla}_{i}%
^{(m)}(x^{\lambda})+ p_{i}^{(n,\kappa)}(\overline{\lambda}^{m})x^{s_{i}%
\lambda},\\
\widehat{\pi}(x^{\mu})x^{\lambda} & :=x^{\lambda+\mu}%
\end{split}
\end{equation}
for $1\leq i<r$, $\mu\in mP$ and $\lambda\in P$ define a representation
\[
\widehat{\pi}: \widetilde{H}^{(m)}_{X}\rightarrow\textup{End}_{\mathbb{K}%
^{(n,\kappa)}}\bigl(\mathbb{K}^{(n,\kappa)}[P]\bigr).
\]
Using the decomposition
\[
\mathbb{K}^{(n,\kappa)}[\Lambda]= \bigoplus_{s\in\mathbb{Z}}x^{s\mathfrak{n}%
}\mathbb{K}^{(n,\kappa)}[P]
\]
it extends to a representation $\widehat{\pi}: \widetilde{H}^{(m)}%
_{X}\rightarrow\textup{End}_{\mathbb{K}^{(n,\kappa)}}\bigl(\mathbb{K}%
^{(n,\kappa)}[\Lambda]\bigr)$ by $\widehat{\pi}(h)\bigl(x^{s\mathfrak{n}%
}x^{\lambda}\bigr):=x^{s\mathfrak{n}}\widehat{\pi}(h)x^{\lambda}$ for
$h\in\widetilde{H}^{(m)}_{X}$, $s\in\mathbb{Z}$ and $\lambda\in P$. The
formulas \eqref{assignments2} are then valid for all $\lambda\in\Lambda$. A
direct check shows that the operators $\widehat{\pi}(T_{j})$ and
$\widehat{\pi}(x^{\mu}) $ on $\mathbb{K}^{(n,\kappa)}[\Lambda]$ satisfy the
cross relation \eqref{crossdaha} for $1\leq j<r$ and $\mu\in m\mathbb{Z}^{r}$.
Furthermore, direct computations show that
\[%
\begin{split}
p_{j+1}^{(n,\kappa)}\bigl(\overline{(s_{1}\cdots s_{r-1}\lambda)}^{m}\bigr) &
=p_{j}(\overline{\lambda}^{m}),\\
\widehat{\pi}(\omega)\overline{\nabla}_{j}^{(m)} & =\overline{\nabla}%
_{j+1}^{(m)}\widehat{\pi}(\omega)
\end{split}
\]
for $\lambda\in\mathbb{Z}^{r}$ and $0\leq j<r$, hence $\widehat{\pi}%
(\omega)\widehat{\pi}(T_{j})=\widehat{\pi}(T_{j+1})\widehat{\pi}(\omega)$ as
operators on $\mathbb{K}^{(n,\kappa)}[x^{\pm1}]$, with the indices modulo $r$.
{}From this the defining double affine Hecke algebra relations involving
$T_{0}$ are easily verified. The result now follows directly.
\end{proof}

We call $\widehat{\pi}^{(n,\kappa)}$ the \textit{metaplectic basic
representation} of the double affine Hecke algebra $\mathbb{H}^{(m)}$.

\begin{rema}
\label{mnk} \textbf{(i)} Since the double affine Hecke algebra $\mathbb{H}%
^{(m)}$ is defined over $\mathbb{F}$, the representation parameters should be
thought of as representation parameters of the representation $\widehat{\pi
}^{(n,\kappa)}$. Since the representation is defined over the subfield
$\mathbb{K}^{(n,\kappa)}$ of $\mathbb{K}^{(n)}$, the representation
$\widehat{\pi}^{(n,\kappa)}$ only depends on the representation parameters
$g_{\kappa^{\prime}j}^{(n)}$ ($1\leq j<\frac{m}{2}$) and, if $m$ is even, on
$\epsilon$.\newline\textbf{(ii)} It follows from
\[
\iota_{\kappa}(p_{j}^{(m,1)}(\overline{\lambda}^{m}))=p_{j}^{(n,\kappa
)}(\overline{\lambda}^{m})
\]
for $0\leq j<r$ and $\lambda\in\mathbb{Z}^{r}$ that
\[
\overline{\iota}_{\kappa}\bigl(\sum_{\mu}c_{\mu}x^{\mu}\bigr):=\sum_{\mu}%
\iota_{\kappa}(c_{\mu})x^{\mu}\qquad(c_{\mu}\in\mathbb{K}^{(m)})
\]
defines an isomorphism
\[
\overline{\iota}_{\kappa}: (\mathbb{K}^{(m)}[x^{\pm1}], \widehat{\pi}%
^{(m)})\overset{\sim}{\longrightarrow} (\mathbb{K}^{(n,\kappa)}[x^{\pm
1}],\widehat{\pi}^{(n,\kappa)})
\]
of $\mathbb{H}^{(m)}$-modules. In particular, $(\mathbb{K}^{(n,\kappa)}%
[x^{\pm1}],\widehat{\pi}^{(n,\kappa)})$ only depends on $\epsilon$ if $m$ is
even.\newline\textbf{(iii)} $\widehat{\pi}^{(1)}: \mathbb{H}^{(1)}%
\rightarrow\textup{End}_{\mathbb{F}}\bigl(\mathbb{F}[x^{\pm1}]\bigr)$ is
Cherednik's basic representation for $\textup{GL}_{r}$, see, e.g.,
\cite[§3.7]{Ch} and \cite{HHL}.\newline
\end{rema}

By the second part of the remark, the dependence of the metaplectic basic
representation on the metaplectic data is essentially only a dependence on $m
$. The metaplectic basic representation $\widehat{\pi}^{(m)}$ can be recovered
from $\widehat{\pi}^{(n)}$ as follows.

By a direct check one verifies that the assignments
\[
\phi_{\kappa^{\prime}}(q):=q^{\kappa^{\prime\,2}},\qquad\phi_{\kappa^{\prime}%
}(T_{j}):=T_{j},\qquad\phi_{\kappa^{\prime}}(\omega):=\omega, \qquad
\phi_{\kappa^{\prime}}(x^{\lambda}):=x^{\kappa^{\prime}\lambda}%
\]
for $1\leq i<r$ and $\lambda\in m\mathbb{Z}^{r}$ define a morphism
$\phi_{\kappa^{\prime}}: \mathbb{H}^{(m)}\rightarrow\mathbb{H}^{(n)}$ of
$\mathbb{C}(k)$-algebras. Note that $\phi_{\kappa^{\prime}}(Y^{\lambda
})=Y^{\kappa^{\prime}\lambda}$ for $\lambda\in m\mathbb{Z}^{r}$.

Let $\j_{\kappa^{\prime}}: \mathbb{K}^{(m)}\hookrightarrow\mathbb{K}^{(n)}$ be
the $\mathbb{C}(q)$-homomorphism mapping $q$ to $q^{\kappa^{\prime\,2}}$ and
$g_{j}^{(m)}$ to $g_{\kappa^{\prime}j}^{(n)}$ for all $j\in\mathbb{Z}$. Note
the difference with $\iota_{\kappa^{\prime}}: \mathbb{K}^{(m)}\rightarrow
\mathbb{K}^{(n)}$ (Lemma \ref{iota}), which fixes $q$. The image
$\mathbb{K}^{(n,\kappa^{\prime})}_{\j}$ of the homomorphism $\j_{\kappa
^{\prime}}: \mathbb{K}^{(m)}\rightarrow\mathbb{K}^{(n)}$ is the subfield of
$\mathbb{K}^{(n)}$ obtained by adjoining $q^{\kappa^{\prime\, 2}}$ and
$g_{\kappa^{\prime}j}^{(n)}$ ($1\leq j<\frac{m}{2}$) to $\mathbb{C}(k)$. We
now have the following proposition.


\begin{prop}
\label{mn} $\mathbb{K}^{(n,\kappa^{\prime})}_{\j}[x^{\pm\kappa^{\prime}%
}]\subseteq\mathbb{K}^{(n)}[x^{\pm1}]$ is a $(\widehat{\pi}_{n}\circ
\phi_{\kappa^{\prime}},\mathbb{H}^{m})$-submodule. Then
\[
\overline{\j}_{\kappa^{\prime}}\bigl(\sum_{\mu}c_{\mu}x^{\mu}\bigr):=\sum
_{\mu}\j_{\kappa^{\prime}}(c_{\mu})x^{\kappa^{\prime}\mu}\qquad(c_{\mu}%
\in\mathbb{K}^{(m)})
\]
defines an isomorphism
\[
\overline{\j}_{\kappa^{\prime}}: (\mathbb{K}^{(m)}[x^{\pm1}],\widehat{\pi
}^{(m)})\overset{\sim}{\longrightarrow} (\mathbb{K}^{(n,\kappa^{\prime})}_{\j
}[x^{\pm\kappa^{\prime}}],\widehat{\pi}^{(n)}\circ\phi_{\kappa^{\prime}})
\]
of $\mathbb{H}^{(m)}$-modules. In particular, $(\mathbb{K}^{(n,n)}_{\j}[x^{\pm
n}], \widehat{\pi}^{(n)}\circ\phi_{n})$ realizes Cherednik's basic
representation $\widehat{\pi}^{(1)}$ with the role of $q$ replaced by
$q^{n^{2}}$.
\end{prop}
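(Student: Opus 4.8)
The plan is to prove the Proposition by checking directly that the $\j_{\kappa^{\prime}}$-semilinear bijection $\overline{\j}_{\kappa^{\prime}}$ intertwines $\widehat{\pi}^{(m)}(h)$ with $\widehat{\pi}^{(n)}(\phi_{\kappa^{\prime}}(h))$ for $h$ running over the algebra generators $x^{\mu}$ ($\mu\in m\mathbb{Z}^{r}$), $\omega$, $T_{0},\dots,T_{r-1}$ of $\mathbb{H}^{(m)}$; the submodule and isomorphism statements then follow formally. Concretely I would evaluate $\widehat{\pi}^{(n)}(\phi_{\kappa^{\prime}}(h))$ on a monomial $x^{\kappa^{\prime}\lambda}$ ($\lambda\in\mathbb{Z}^{r}$), using the defining formulas \eqref{assignments} for the metaplectic basic representation $\widehat{\pi}^{(n)}=\widehat{\pi}^{(n,1)}$, and compare term by term with $\overline{\j}_{\kappa^{\prime}}$ applied to $\widehat{\pi}^{(m)}(h)x^{\lambda}$.

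The arithmetic input is the chain $n=\kappa^{\prime}m$ with $\kappa^{\prime}\mid n$, which gives $r_{n}(\kappa^{\prime}s)=\kappa^{\prime}r_{m}(s)$, hence $t_{n}(\kappa^{\prime}s)=\kappa^{\prime}t_{m}(s)$ for all $s\in\mathbb{Z}$, together with the fact that $\omega^{(n)}$ and $s_{j}^{(n)}$ act on $\kappa^{\prime}\mathbb{R}^{r}$ as $\kappa^{\prime}$ times $\omega^{(m)}$ resp.\ $s_{j}^{(m)}$ act on $\mathbb{R}^{r}$. Via \eqref{v1}--\eqref{v2} this translates into the statement that $\overline{\j}_{\kappa^{\prime}}$ — which scales each monomial exponent by $\kappa^{\prime}$ and each $q$-exponent by $\kappa^{\prime 2}$ — intertwines the $\omega$- and $s_{j}^{(\cdot)}$-actions; the decisive numerical coincidence is $\kappa^{\prime 2}m^{2}=n^{2}$, which makes $x^{b_{0}^{(m)}}=q^{m^{2}}x^{-m\theta}$ go to $x^{b_{0}^{(n)}}=q^{n^{2}}x^{-n\theta}$. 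Granting this, the $T_{j}$-identity splits into two pieces. For the reflection term one needs
\[
\j_{\kappa^{\prime}}\!\left(p_{j}^{(m,1)}(\overline{\lambda}^{m})\right)=p_{j}^{(n,1)}(\overline{\kappa^{\prime}\lambda}^{n}),
\]
which is immediate from $\j_{\kappa^{\prime}}(g_{\ell}^{(m)})=g_{\kappa^{\prime}\ell}^{(n)}$ and the linearity of $\lambda\mapsto(\lambda,\alpha^{\vee})$ (recall that for the data with $\mathbf{Q}(\alpha)=1$ one has $\mathbf{B}(\lambda,\alpha_{j})=(\lambda,\alpha_{j}^{\vee})$ and $\mathbf{B}(\lambda,\theta)=(\lambda,\theta^{\vee})$). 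For the divided-difference term one needs
\[
\overline{\nabla}_{j}^{(n)}(x^{\kappa^{\prime}\lambda})=\overline{\j}_{\kappa^{\prime}}\!\left(\overline{\nabla}_{j}^{(m)}(x^{\lambda})\right),
\]
which follows by writing $\overline{\nabla}_{j}^{(m)}(x^{\lambda})$ as a Laurent polynomial in $x^{b_{j}^{(m)}}$ times $x^{\lambda}$ and invoking $t_{n}(\kappa^{\prime}\,\cdot)=\kappa^{\prime}t_{m}(\cdot)$ (for $j=0$ one uses in addition $x^{b_{0}^{(m)}}\mapsto x^{b_{0}^{(n)}}$ under $\overline{\j}_{\kappa^{\prime}}$).

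Once these intertwining identities are established I would conclude as follows. Applying $\widehat{\pi}^{(n)}(\phi_{\kappa^{\prime}}(h))$ to a basis vector $x^{\kappa^{\prime}\mu}$ of $\mathbb{K}^{(n,\kappa^{\prime})}_{\j}[x^{\pm\kappa^{\prime}}]$ keeps us inside that space (note $\phi_{\kappa^{\prime}}(\mathbb{F})\subseteq\mathbb{K}^{(n,\kappa^{\prime})}_{\j}$ and that scalars act by multiplication), so $\mathbb{K}^{(n,\kappa^{\prime})}_{\j}[x^{\pm\kappa^{\prime}}]$ is a $(\widehat{\pi}^{(n)}\circ\phi_{\kappa^{\prime}})$-submodule; the same identities, extended multiplicatively over the generators, say precisely that $\overline{\j}_{\kappa^{\prime}}$ is a morphism of $\mathbb{H}^{(m)}$-modules, and since $\j_{\kappa^{\prime}}$ is an embedding of fields and $\{x^{\kappa^{\prime}\mu}\}_{\mu\in\mathbb{Z}^{r}}$ is a $\mathbb{K}^{(n,\kappa^{\prime})}_{\j}$-basis of the image, it is an isomorphism. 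The final assertion is just the case $\kappa^{\prime}=n$, i.e.\ $m=1$: then $\widehat{\pi}^{(m)}=\widehat{\pi}^{(1)}$ is Cherednik's basic representation by Remark \ref{mnk}\textbf{(iii)}, while $\j_{n}$ replaces $q$ by $q^{n^{2}}$. I expect the only genuinely delicate point to be the bookkeeping of the powers of $q$ attached to the affine simple root $b_{0}^{(m)}$ and to $\omega^{(m)}$; away from the affine node the whole argument is a routine linear comparison of the two instances of \eqref{assignments}.
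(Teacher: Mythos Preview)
The paper does not actually supply a proof of this proposition: Section~\ref{sect5} opens with the disclaimer that ``full proofs and additional results will be provided in a forthcoming paper,'' and Proposition~\ref{mn} is stated without any accompanying argument. So there is no proof in the paper to compare yours against.

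That said, your proposal is correct and is the natural direct argument. The two key identities you isolate,
\[
t_{n}(\kappa' s)=\kappa'\,t_{m}(s)\qquad\text{and}\qquad \j_{\kappa'}\bigl(g_{\ell}^{(m)}\bigr)=g_{\kappa'\ell}^{(n)},
\]
are exactly what one needs to match the two instances of \eqref{assignments} under $\overline{\j}_{\kappa'}$, and your treatment of the affine node (tracking $q^{m^{2}}\mapsto q^{\kappa'^{2}m^{2}}=q^{n^{2}}$ and $q^{-m\lambda_{r}}\mapsto q^{-\kappa'^{2}m\lambda_{r}}=q^{-n(\kappa'\lambda)_{r}}$) is the only place where the $q$-rescaling in $\j_{\kappa'}$ (as opposed to $\iota_{\kappa}$, which fixes $q$) is actually used. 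One small point worth stating explicitly: the intertwining identities you prove on monomials extend $\j_{\kappa'}$-semilinearly to all of $\mathbb{K}^{(m)}[x^{\pm1}]$ because $\widehat{\pi}^{(n)}(\phi_{\kappa'}(c))=\j_{\kappa'}(c)$ for $c\in\mathbb{F}$ acting by scalars (this is the content of $\phi_{\kappa'}(q)=q^{\kappa'^{2}}$), so that $\overline{\j}_{\kappa'}$ really is a module map over the $\mathbb{C}(k)$-algebra $\mathbb{H}^{(m)}$ with its built-in $q$-action. With that bookkeeping in place, your argument goes through cleanly.
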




\subsection{The metaplectic polynomials}\label{sect:metpols}


We keep the notations from the previous subsections. In particular,
$(n,\mathbf{Q})$ is the fixed metaplectic data and $\kappa:=\mathbf{Q}%
(\alpha)$ ($\alpha\in\Phi$), leading to the positive integers $\kappa^{\prime
}$ and $m$ by \eqref{defm}. We furthermore fixed a sign $\epsilon\in\{\pm1 \}$
through the definition of the representation parameter $g_{\frac{n}{2}}%
^{(n)}:=\epsilon^{-1}k^{-1}$ if $n$ is even.

The commuting linear operators $\widehat{\pi}^{(n,\kappa)}(Y^{\mu}%
)\in\textup{End}_{\mathbb{K}^{(n,\kappa)}}\bigl(\mathbb{K}^{(n, \kappa
)}[x^{\pm1}]\bigr)$ ($\mu\in m\mathbb{Z}^{r}$) are metaplectic analogs of
Cherednik's $Y $-operators. The following theorem establishes the existence of
a family of Laurent polynomials which are simultaneous eigenfunctions of the
metaplectic $Y$-operators.

For $\mu\in\mathbb{Z}^{r}$ define $\gamma_{\mu}^{(n,\kappa)}\in\textup{Hom}%
_{\mathbb{Z}}\bigl(m\mathbb{Z}^{r},\mathbb{K}^{(n)\times}\bigr)$ by
\[
\gamma_{\mu}^{(n,\kappa)}:=q^{-\mu}\prod_{\alpha\in\Phi^{+}}\bigl(\sigma
^{(n,\kappa)} ((\mu,\alpha^{\vee}))\bigr)^{\frac{\alpha^{\vee}}{m}},
\]
with $\sigma^{(n,\kappa)}: \mathbb{Z}\rightarrow\mathbb{K}^{(n)}$ defined by
\[
\sigma^{(n,\kappa)}(s):=
\begin{cases}
k^{-1}\qquad & \hbox{ if } s\in m\mathbb{Z}_{>0},\\
-kg_{-\kappa s}\qquad & \hbox{ if } s\in\mathbb{Z}\setminus m\mathbb{Z}_{>0}.
\end{cases}
\]
In other words, the value $\bigl(\gamma_{\mu}^{(n,\kappa)}\bigr)^{\lambda}$ of
$\gamma_{\mu}^{(n,\kappa)}$ at $\lambda\in m\mathbb{Z}^{r}$ is
\[
\bigl(\gamma_{\mu}^{(n,\kappa)}\bigr)^{\lambda}= q^{-(\lambda,\mu)}%
\prod_{\alpha\in\Phi^{+}}\bigl(\sigma^{(n,\kappa)} ((\mu,\alpha^{\vee
}))\bigr)^{\frac{(\lambda,\alpha^{\vee})}{m}}%
\]
Note that $\gamma_{\mu}^{(n,\kappa)}$ takes values in $\mathbb{K}^{(n,\kappa
)}$.


\begin{thm}
\label{thm:E-pols} There exists a unique family of Laurent polynomials $\{
E_{\mu}^{(n,\kappa)}(x) \}_{\mu\in\mathbb{Z}^{r}}$ in $\mathbb{K}^{(n,\kappa
)}[x^{\pm1}]$ such that for $\mu\in\mathbb{Z}^{r}$,

\begin{enumerate}
\item[\textbf{(i)}] For all $\lambda\in m \mathbb{Z}^{r}$ we have
\[
\widehat{\pi}^{(n,\kappa)}(Y^{\lambda}) E_{\mu}^{(n,\kappa)}(x)=
\bigl(\gamma_{\mu}^{(n,\kappa)}\bigr)^{\lambda}E_{\mu}^{(n,\kappa)}(x).
\]

\item[\textbf{(ii)}] The coefficient of $x^{\mu}$ in the expansion of $E_{\mu
}^{(n,\kappa)}(x)$ in the monomial basis $\{x^{\nu}\}_{\nu\in\mathbb{Z}^{r}}$,
is one.
\end{enumerate}

We will write $E_{\mu}^{(n)}(x):=E_{\mu}^{(n,1)}(x)$ for $\mu\in\mathbb{Z}%
^{r}$.
\end{thm}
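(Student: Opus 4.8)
The plan is to prove existence and uniqueness by the triangularity-plus-spectral-separation argument that underlies the theory of nonsymmetric Macdonald polynomials, carried over to the metaplectic $Y$-operators of Theorem \ref{GLrmetabasic}. Equip $\mathbb{Z}^r$ with the Cherednik partial order $\preceq$ governing the $\mathrm{GL}_r$ nonsymmetric Macdonald polynomials (cf.\ \cite{Ma, HHL}): $\nu\preceq\mu$ if $\nu^{+}$ is strictly dominated by $\mu^{+}$, or $\nu^{+}=\mu^{+}$ and $\nu$ precedes $\mu$ inside the common $S_r$-orbit in the sense used there. The only structural facts about $\preceq$ that I would use are that the ``total size'' $\sum_i\nu_i$ is constant on each $\preceq$-interval, that each lower set $\{\nu\,|\,\nu\preceq\mu\}$ is finite, and that the elementary moves relating $x^{\lambda}$ to $x^{s_j^{(m)}\lambda}$ together with the intermediate monomials $x^{\lambda+s\,m\alpha_j}$ occurring in $\overline{\nabla}_{j}^{(m)}(x^{\lambda})$ do not raise the $\preceq$-class of $x^{\lambda}$ unless one lands on the opposite endpoint $x^{s_j^{(m)}\lambda}$ --- exactly as in the classical case $m=1$.

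The heart of the matter is the \emph{triangularity} of the metaplectic Cherednik operators: for $i=1,\dots,r$ and $\mu\in\mathbb{Z}^r$,
\[
\widehat{\pi}^{(n,\kappa)}(Y^{m\epsilon_i})\,x^{\mu}=\bigl(\gamma_{\mu}^{(n,\kappa)}\bigr)^{m\epsilon_i}x^{\mu}+\sum_{\nu\prec\mu}(\ast)\,x^{\nu},
\]
from which, by taking products and inverses, every $\widehat{\pi}^{(n,\kappa)}(Y^{\lambda})$ ($\lambda\in m\mathbb{Z}^r$) becomes triangular with diagonal coefficient $(\gamma_\mu^{(n,\kappa)})^{\lambda}$. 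Since the $Y^{m\epsilon_i}$ are, by \eqref{eqref:Y-opers}, fixed words in $\widehat{\pi}^{(n,\kappa)}(\omega)$ and $\widehat{\pi}^{(n,\kappa)}(T_j^{\pm1})$ ($1\le j<r$), I would track monomials through these generators: by \eqref{assignments} and \eqref{v2}, $\widehat{\pi}^{(n,\kappa)}(\omega)$ carries a monomial to a single monomial times a unit of $\mathbb{K}^{(n,\kappa)}$, while $\widehat{\pi}^{(n,\kappa)}(T_j^{\pm1})$ sends $x^{\lambda}$ into the span of $x^{\lambda}$, $x^{s_j^{(m)}\lambda}$ and the strictly intermediate monomials occurring in $\overline{\nabla}_{j}^{(m)}(x^{\lambda})$, the coefficient of the ``leading'' monomial being $k^{\pm1}$ or $p_j^{(n,\kappa)}(\overline{\lambda}^{m})$. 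These are precisely the ingredients of the classical argument (Cherednik; \cite[\S5]{Ma}); replacing the classical divided difference by $\overline{\nabla}_{j}^{(m)}$ (cf.\ \eqref{111}) and a constant Hecke parameter by the root-class-dependent $p_j^{(n,\kappa)}$ alters none of the combinatorics of which monomials occur, only the numerical coefficients, so the triangular structure survives; and running the bookkeeping of leading coefficients through $Y^{m\epsilon_i}=T_{i-1}^{-1}\cdots T_1^{-1}\,\omega\, T_{r-1}\cdots T_i$ applied to $x^{\mu}$ reproduces exactly the product of $\sigma^{(n,\kappa)}$-values and the $q$-power packaged in $(\gamma_{\mu}^{(n,\kappa)})^{m\epsilon_i}$. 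I expect matching this bookkeeping with the case split $\sigma^{(n,\kappa)}(s)=k^{-1}$ versus $-kg^{(n)}_{-\kappa s}$ to be the one genuinely delicate point. (Alternatively, one first notes that the decomposition $\mathbb{K}^{(n,\kappa)}[x^{\pm1}]=\bigoplus_{\mathcal{O}}\bigl(\bigoplus_{\overline{\mu}^{m}\in\mathcal{O}}x^{\mu}\mathbb{K}^{(n,\kappa)}[x^{\pm m}]\bigr)$ over the $S_r$-orbits $\mathcal{O}$ in $(\mathbb{Z}/m\mathbb{Z})^{r}$ is preserved by every $\widehat{\pi}^{(n,\kappa)}(Y^{\lambda})$, which already localizes the problem and shows $E_{\mu}^{(n,\kappa)}(x)$ lives in the block indexed by $\overline{\mu}^{m}$.)

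Granting the triangularity lemma, the rest is routine. For spectral separation I would argue that $\mu\mapsto\gamma_{\mu}^{(n,\kappa)}$ is injective: in $\mathbb{K}^{(n)}=\mathbb{C}(k)(g_1^{(n)},\dots)(q)$ the indeterminate $q$ is transcendental over the subfield $\mathbb{C}(k)(g_1^{(n)},\dots)$ containing every value of $\sigma^{(n,\kappa)}$, and $(\gamma_{\mu}^{(n,\kappa)})^{\lambda}=q^{-(\lambda,\mu)}\cdot u$ with $u$ a nonzero element of that subfield, so $\gamma_{\mu}^{(n,\kappa)}=\gamma_{\nu}^{(n,\kappa)}$ forces $(\lambda,\mu)=(\lambda,\nu)$ for all $\lambda\in m\mathbb{Z}^r$, i.e.\ $\mu=\nu$. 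Then, fixing $\mu$, the space $V_{\preceq\mu}:=\bigoplus_{\nu\preceq\mu}\mathbb{K}^{(n,\kappa)}x^{\nu}$ is finite-dimensional and stable under the commuting family $\{\widehat{\pi}^{(n,\kappa)}(Y^{\lambda})\}_{\lambda\in m\mathbb{Z}^r}$, which acts on it simultaneously triangularly with joint diagonal character $\gamma_{\nu}^{(n,\kappa)}$ at $x^{\nu}$; by injectivity the character $\gamma_{\mu}^{(n,\kappa)}$ occurs with multiplicity one, so its common generalized --- hence genuine --- eigenspace in $V_{\preceq\mu}$ is one-dimensional and spanned by a vector of the form $x^{\mu}+\sum_{\nu\prec\mu}(\ast)x^{\nu}$. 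Normalizing the $x^{\mu}$-coefficient to $1$ defines $E_{\mu}^{(n,\kappa)}(x)$ and yields (i) and (ii), with the eigenvalue read off as $(\gamma_{\mu}^{(n,\kappa)})^{\lambda}$ from the $x^{\mu}$-coefficient of $\widehat{\pi}^{(n,\kappa)}(Y^{\lambda})E_{\mu}^{(n,\kappa)}(x)$; and for global uniqueness one observes that any solution of (i) is a common $\gamma_{\mu}^{(n,\kappa)}$-eigenvector, hence (using $\widehat{\pi}^{(n,\kappa)}(Y^{m(1,\dots,1)})$ to confine its support to total size $\sum_i\mu_i$, then triangularity and injectivity) lies in the one-dimensional $\gamma_{\mu}^{(n,\kappa)}$-eigenspace, so equals $E_{\mu}^{(n,\kappa)}(x)$ after normalization by (ii).
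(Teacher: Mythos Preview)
The paper does not actually prove this theorem: the sentence immediately following the statement reads ``The proof of the theorem, including its extension to arbitrary root systems, will be given in a forthcoming paper.'' So there is no argument here against which to compare your proposal.

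That said, your approach is the natural one and is almost certainly what the authors have in mind. The triangularity-plus-spectral-separation scheme is Cherednik's, and your key observation that $\overline{\nabla}_j^{(m)}(x^\lambda)$ only produces monomials $x^{\lambda+tm\alpha_j}$ --- a sublattice of the classical $\alpha_j$-root string through $\lambda$, cf.\ \eqref{111} --- is exactly what makes the classical partial-order bookkeeping go through unchanged, since the ``reflected'' term $x^{s_j\lambda}$ is the same as in the $m=1$ case and only the intermediate terms thin out. Your spectral-separation argument via the transcendence of $q$ over the subfield generated by $k$ and the $g_j^{(n)}$ is clean and correct. The point you rightly flag as delicate is the identification of the diagonal coefficient of $\widehat{\pi}^{(n,\kappa)}(Y^{m\epsilon_i})$ on $x^\mu$ with $(\gamma_\mu^{(n,\kappa)})^{m\epsilon_i}$: one must track, through the word $T_{i-1}^{-1}\cdots T_1^{-1}\omega T_{r-1}\cdots T_i$, whether at each $T_j^{\pm1}$ the leading contribution carries coefficient $k$, $k^{-1}$, $p_j^{(n,\kappa)}(\overline{\lambda}^m)$, or its inverse, and then match the resulting product against the case split in $\sigma^{(n,\kappa)}(s)$ (which distinguishes $s\in m\mathbb{Z}_{>0}$ from $s\notin m\mathbb{Z}_{>0}$, not merely positive from nonpositive). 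This is a genuine computation, not a relabeling of the classical one, but it is routine once set up carefully.
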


The proof of the theorem, including its extension to arbitrary root systems,
will be given in a forthcoming paper. The following proposition is a
consequence of Remark \ref{mnk}\textbf{(ii)} and Proposition \ref{mn}.

\begin{prop}
\label{E_transformation1} For all $\mu\in\mathbb{Z}^{r}$,
\begin{equation}
\label{relmm}%
\begin{split}
\overline{\iota}_{\kappa}\bigl(E_{\mu}^{(m)}(x)\bigr) & =E_{\mu}^{(n,\kappa
)}(x),\\
\overline{\j}_{\kappa^{\prime}}\bigl(E_{\mu}^{(m)}(x)\bigr) & = E_{\kappa
^{\prime}\mu}^{(n)}(x).
\end{split}
\end{equation}

\end{prop}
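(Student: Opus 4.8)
The plan is to obtain both identities in \eqref{relmm} directly from the uniqueness statement of Theorem \ref{thm:E-pols}, by transporting the eigenfunction equations that characterise the $E_\mu^{(m)}(x)$ along the $\mathbb{H}^{(m)}$-module isomorphisms $\overline{\iota}_\kappa$ of Remark \ref{mnk}\textbf{(ii)} and $\overline{\j}_{\kappa'}$ of Proposition \ref{mn}. In both cases the scheme is the same: apply the relevant isomorphism to the defining relation
\[
\widehat{\pi}^{(m)}(Y^\lambda)\,E_\mu^{(m)}(x)=\bigl(\gamma_\mu^{(m)}\bigr)^\lambda E_\mu^{(m)}(x),\qquad \lambda\in m\mathbb{Z}^r,
\]
recognise the image as a simultaneous eigenfunction of the appropriate $Y$-operators with the eigenvalue predicted by Theorem \ref{thm:E-pols}, check that the normalisation \textbf{(ii)} is preserved, and conclude by uniqueness. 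Here I will use that for $\lambda\in m\mathbb{Z}^r$ the exponents $(\lambda,\alpha^\vee)/m$ occurring in $\gamma_\mu^{(m)}$ are genuine integers, so that a field homomorphism may be pushed through the corresponding powers.

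For the first identity, since $\overline{\iota}_\kappa$ intertwines $\widehat{\pi}^{(m)}$ with $\widehat{\pi}^{(n,\kappa)}$, applying it to the displayed relation gives
\[
\widehat{\pi}^{(n,\kappa)}(Y^\lambda)\,\overline{\iota}_\kappa\bigl(E_\mu^{(m)}(x)\bigr)=\iota_\kappa\bigl((\gamma_\mu^{(m)})^\lambda\bigr)\,\overline{\iota}_\kappa\bigl(E_\mu^{(m)}(x)\bigr),\qquad\lambda\in m\mathbb{Z}^r.
\]
It then remains to verify $\iota_\kappa\bigl((\gamma_\mu^{(m)})^\lambda\bigr)=\bigl(\gamma_\mu^{(n,\kappa)}\bigr)^\lambda$; comparing the explicit product formulas and using that $\iota_\kappa$ fixes $\mathbb{F}$, this reduces to $\iota_\kappa\bigl(\sigma^{(m,1)}(s)\bigr)=\sigma^{(n,\kappa)}(s)$ for all $s\in\mathbb{Z}$, which is immediate from the case split $s\in m\mathbb{Z}_{>0}$ versus $s\notin m\mathbb{Z}_{>0}$ together with $\iota_\kappa(g_j^{(m)})=g_{\kappa j}^{(n)}$ (recall that the integer $m$ attached to the datum $(n,\kappa)$ is precisely $n/\textup{gcd}(n,\kappa)$). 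Since $\overline{\iota}_\kappa$ fixes each monomial $x^\nu$, the coefficient of $x^\mu$ in $\overline{\iota}_\kappa(E_\mu^{(m)}(x))$ is again $1$. Uniqueness in Theorem \ref{thm:E-pols} for the datum $(n,\kappa)$ now yields $\overline{\iota}_\kappa(E_\mu^{(m)}(x))=E_\mu^{(n,\kappa)}(x)$.

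The second identity is handled in the same way, now using $\overline{\j}_{\kappa'}$ and the algebra map $\phi_{\kappa'}: \mathbb{H}^{(m)}\to\mathbb{H}^{(n)}$, for which $\phi_{\kappa'}(Y^\lambda)=Y^{\kappa'\lambda}$. Because $\kappa'\cdot m\mathbb{Z}^r=n\mathbb{Z}^r$, applying $\overline{\j}_{\kappa'}$ to the displayed relation produces, for every $\lambda'=\kappa'\lambda\in n\mathbb{Z}^r$,
\[
\widehat{\pi}^{(n)}(Y^{\lambda'})\,\overline{\j}_{\kappa'}\bigl(E_\mu^{(m)}(x)\bigr)=\j_{\kappa'}\bigl((\gamma_\mu^{(m)})^\lambda\bigr)\,\overline{\j}_{\kappa'}\bigl(E_\mu^{(m)}(x)\bigr).
\]
One then checks $\j_{\kappa'}\bigl((\gamma_\mu^{(m)})^\lambda\bigr)=\bigl(\gamma_{\kappa'\mu}^{(n)}\bigr)^{\kappa'\lambda}$ from $\j_{\kappa'}(q)=q^{\kappa^{\prime\,2}}$ and $\j_{\kappa'}(g_j^{(m)})=g_{\kappa' j}^{(n)}$, together with the elementary identities $(\kappa'\lambda,\kappa'\mu)=\kappa^{\prime\,2}(\lambda,\mu)$, $(\kappa'\lambda,\alpha^\vee)/n=(\lambda,\alpha^\vee)/m$ and $\j_{\kappa'}\bigl(\sigma^{(m,1)}(s)\bigr)=\sigma^{(n,1)}(\kappa's)$. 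Since $\overline{\j}_{\kappa'}$ sends $x^\mu$ to $x^{\kappa'\mu}$ and rescales coefficients by $\j_{\kappa'}$, the coefficient of $x^{\kappa'\mu}$ in $\overline{\j}_{\kappa'}(E_\mu^{(m)}(x))$ is $\j_{\kappa'}(1)=1$, and uniqueness in Theorem \ref{thm:E-pols} for the datum $(n,1)$ gives $\overline{\j}_{\kappa'}(E_\mu^{(m)}(x))=E_{\kappa'\mu}^{(n)}(x)$. I do not expect any genuine obstacle: the substance is confined to the two eigenvalue-character computations above, and the only point requiring care is keeping track of the $\kappa'$-rescalings of weights and indices in the second identity, so that the $q$-power, the fractional exponents and the argument of $\sigma$ all line up, and confirming the integrality of $(\lambda,\alpha^\vee)/m$ on $m\mathbb{Z}^r$ needed to pass $\iota_\kappa$ and $\j_{\kappa'}$ through the powers.
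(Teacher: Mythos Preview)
Your proposal is correct and follows exactly the route the paper indicates: the paper's entire argument is the single sentence ``The following proposition is a consequence of Remark \ref{mnk}\textbf{(ii)} and Proposition \ref{mn},'' and you have carefully unpacked this by transporting the defining eigenfunction equations of Theorem \ref{thm:E-pols} along the module isomorphisms $\overline{\iota}_\kappa$ and $\overline{\j}_{\kappa'}$ and invoking uniqueness. The eigenvalue verifications $\iota_\kappa\bigl(\sigma^{(m,1)}(s)\bigr)=\sigma^{(n,\kappa)}(s)$ and $\j_{\kappa'}\bigl(\sigma^{(m,1)}(s)\bigr)=\sigma^{(n,1)}(\kappa' s)$, together with the matching of $q$-powers and the integrality of $(\lambda,\alpha^\vee)/m$ for $\lambda\in m\mathbb{Z}^r$, are precisely the checks needed and are handled correctly.
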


By the first line of \eqref{relmm}, the metaplectic polynomial $E^{(n,\kappa
)}(x)$ essentially only depends on $m$, $q$, $k$, the representation
parameters $g_{\kappa^{\prime}j}^{(n)}$ ($1\leq j<\frac{m}{2}$) and, if $m$ is
even, on $\epsilon$.

\begin{rema}
\label{classical_case} By Remark \ref{mnk}\textbf{(iii)}, $E_{\mu}^{(1)}(x)$
is the monic nonsymmetric Macdonald polynomial of degree $\mu$ (compared to
the standard conventions on nonsymmetric Macdonald polynomials as in e.g.
\cite{HHL}, $k^{2}$ corresponds to $t$). Furthermore, as a special case of the
second line of \eqref{relmm}, $E_{n\mu}^{(n)}(x)$ realizes the monic
nonsymmetric Macdonald polynomial of degree $\mu\in\mathbb{Z}^{r}$ in the
variables $x_{1}^{n},\ldots,x_{r}^{n}$, with the role of $q$ replaced by
$q^{n^{2}}$.
\end{rema}



\subsection{Appendix: table of $GL_{3}$ metaplectic polynomials}

We give formulas for $E_{\lambda}^{(m)}(x)$, where $1 \leq m \leq5$ and
$\lambda\in\mathbb{Z}^{3}$ has weight at most $2$. For convenience of
notation, we write $g_{j}$ instead of $g_{j}^{(m)}$. The technique used to
compute these polynomials will be provided in a forthcoming paper. \bigskip

$E_{(0,0,0)}^{(1)}(x) = 1$

$E_{(0,0,0)}^{(2)}(x) = 1$

$E_{(0,0,0)}^{(3)}(x) = 1$

$E_{(0,0,0)}^{(4)}(x) = 1$

$E_{(0,0,0)}^{(5)}(x) = 1$

\bigskip

$E_{(1,0,0)}^{(1)}(x) = x_{1}$

$E_{(1,0,0)}^{(2)}(x) =x_{1}$

$E_{(1,0,0)}^{(3)}(x) =x_{1}$

$E_{(1,0,0)}^{(4)}(x) =x_{1}$

$E_{(1,0,0)}^{(5)}(x) =x_{1}$

\bigskip

$E_{(0,1,0)}^{(1)}(x) = {\frac{\left(  k-1\right)  \left(  k+1\right)  }%
{{k}^{4}q-1}}x_{1}+x_{2}$

$E_{(0,1,0)}^{(2)}(x) = {\frac{\left(  k-1\right)  \left(  k+1\right)
}{k\left(  kq^{2}+\epsilon\right)  }}x_{1}+x_{2}$

$E_{(0,1,0)}^{(3)}(x) ={\frac{\left(  k-1\right)  \left(  k+1\right)  g_{1}%
}{k^{4}g_{1}^{3}q^{3}+1}}x_{1}+x_{2}$

$E_{(0,1,0)}^{(4)}(x) = {\frac{\left(  k-1\right)  \left(  k+1\right)  g_{1}%
}{{k}^{4}g_{1}^{3}q^{4}+1}x_{1}}+x_{2}$

$E_{(0,1,0)}^{(5)}(x) = {\frac{\left(  k-1\right)  \left(  k+1\right)  g_{1}%
}{{k}^{4}g_{1}^{3}q^{5}+1}x_{1}}+x_{2}$

\bigskip

$E_{(0,0,1)}^{(1)}(x) = {\frac{\left(  k-1\right)  \left(  k+1\right)  }%
{q{k}^{2}-1}}x_{1}+{\frac{\left(  k-1\right)  \left(  k+1\right)  }{q{k}%
^{2}-1}}x_{2}+x_{3}$

$E_{(0,0,1)}^{(2)}(x) =-{\frac{\left(  k-1\right)  \left(  k+1\right)
}{k\left(  k+\epsilon q^{2}\right)  }}x_{1}+{\frac{\left(  k-1\right)  \left(
k+1\right)  }{q^{2}+\epsilon k}}x_{2}+x_{3}$

$E_{(0,0,1)}^{(3)}(x) =-{\frac{\left(  k-1\right)  \left(  k+1\right)
g_{1}^{2}}{{k}^{2}g_{1}^{3}q^{3}+1}}x_{1}+{\frac{\left(  k-1\right)  \left(
k+1\right)  g_{1}}{{k}^{2}g_{1}^{3}q^{3}+1}}x_{2}+x_{3}$

$E_{(0,0,1)}^{(4)}(x) = -{\frac{\left(  k-1\right)  \left(  k+1\right)
g_{1}^{2}}{{k}^{2}g_{1}^{3}q^{4}+1}}x_{1}+{\frac{\left(  k-1\right)  \left(
k+1\right)  g_{1}}{{k}^{2}g_{1}^{3}q^{4}+1}}x_{2}+x_{3}$

$E_{(0,0,1)}^{(5)}(x) =-{\frac{\left(  k-1\right)  \left(  k+1\right)
g_{1}^{2}}{{k}^{2}g_{1}^{3}q^{5}+1}}x_{1}+{\frac{\left(  k-1\right)  \left(
k+1\right)  g_{1}}{{k}^{2}g_{1}^{3}q^{5}+1}}x_{2}+x_{3}$

\bigskip

$E_{(0,1,1)}^{(1)}(x) = {\frac{\left(  k-1\right)  \left(  k+1\right)  }%
{q{k}^{2}-1}}x_{1}x_{2}+{\frac{\left(  k-1\right)  \left(  k+1\right)  }%
{q{k}^{2}-1}}x_{3}x_{1}+x_{3}x_{2}$

$E_{(0,1,1)}^{(2)}(x) =-{\frac{\left(  k-1\right)  \left(  k+1\right)
}{k\left(  k+\epsilon q^{2}\right)  }}x_{1}x_{2}+{\frac{\left(  k-1\right)
\left(  k+1\right)  }{q^{2}+\epsilon k}}x_{3}x_{1}+x_{3}x_{2}$

$E_{(0,1,1)}^{(3)}(x) =-{\frac{\left(  k-1\right)  \left(  k+1\right)
g_{1}^{2}}{{k}^{2}g_{1}^{3}q^{3}+1}}x_{1}x_{2}+{\frac{\left(  k-1\right)
\left(  k+1\right)  g_{1}}{{k}^{2}g_{1}^{3}q^{3}+1}}x_{3}x_{1}+x_{3}x_{2}$

$E_{(0,1,1)}^{(4)}(x) =-{\frac{\left(  k-1\right)  \left(  k+1\right)
g_{1}^{2}}{{k}^{2}g_{1}^{3}q^{4}+1}}x_{1}x_{2}+{\frac{\left(  k-1\right)
\left(  k+1\right)  g_{1}}{{k}^{2}g_{1}^{3}q^{4}+1}}x_{3}x_{1}+x_{3}x_{2}$

$E_{(0,1,1)}^{(5)}(x) =-{\frac{\left(  k-1\right)  \left(  k+1\right)
g_{1}^{2}}{{k}^{2}g_{1} ^{3}q^{5}+1}}x_{1}x_{2}+{\frac{\left(  k-1\right)
\left(  k+1\right)  g_{1}}{{k}^{2}g_{1}^{3}q^{5}+1}}x_{3}x_{1}+x_{3}x_{2}$

\bigskip

$E_{(1,0,1)}^{(1)}(x) = {\frac{\left(  k-1\right)  \left(  k+1\right)  }%
{{k}^{4}q-1}}x_{1}x_{2}+x_{3}x_{1}$

$E_{(1,0,1)}^{(2)}(x) = {\frac{\left(  k-1\right)  \left(  k+1\right)
}{k\left(  kq^{2} + \epsilon\right)  }}x_{1}x_{2}+x_{3}x_{1}$

$E_{(1,0,1)}^{(3)}(x) = {\frac{\left(  k-1\right)  \left(  k+1\right)  g_{1}%
}{{k}^{4}g_{1}^{3}q^{3}+1}}x_{1}x_{2}+x_{3}x_{1}$

$E_{(1,0,1)}^{(4)}(x) = {\frac{\left(  k-1\right)  \left(  k+1\right)  g_{1}%
}{{k}^{4}g_{1}^{3}q^{4}+1}}x_{1}x_{2}+x_{3}x_{1}$

$E_{(1,0,1)}^{(5)}(x) = {\frac{\left(  k-1\right)  \left(  k+1\right)  g_{1}%
}{{k}^{4}g_{1}^{3}q^{5}+1}}x_{1}x_{2}+x_{3}x_{1}$

\bigskip

$E_{(1,1,0)}^{(1)}(x) = x_{1}x_{2}$

$E_{(1,1,0)}^{(2)}(x) =x_{1}x_{2}$

$E_{(1,1,0)}^{(3)}(x) = x_{1}x_{2}$

$E_{(1,1,0)}^{(4)}(x) = x_{1}x_{2}$

$E_{(1,1,0)}^{(5)}(x) = x_{1}x_{2}$

\bigskip

$E_{(2,0,0)}^{(1)}(x) = x_{1}^{2}+{\frac{q\left(  k-1\right)  \left(
k+1\right)  }{q{k}^{2}-1}x_{1}x_{2}}+{\frac{q\left(  k-1\right)  \left(
k+1\right)  }{q{k}^{2}-1}x_{3}x_{1}}$

$E_{(2,0,0)}^{(2)}(x) =x_{1}^{2}$

$E_{(2,0,0)}^{(3)}(x) =x_{1}^{2}$

$E_{(2,0,0)}^{(4)}(x) =x_{1}^{2}$

$E_{(2,0,0)}^{(5)}(x) =x_{1}^{2}$

\bigskip

$E_{(0,2,0)}^{(1)}(x) = \frac{\left(  k-1\right)  \left(  k+1\right)
}{\left(  q{k}^{2}-1\right)  \left(  q{k}^{2}+1\right)  }x_{1}^{2}%
+{\frac{\left(  k-1\right)  \left(  k+1\right)  \left(  {k}^{4}{q}^{2}%
+q{k}^{2}-q-1\right)  }{\left(  q{k}^{2}+1\right)  \left(  q{k}^{2}-1\right)
^{2}}}x_{1}x_{2}+{\frac{\left(  k-1\right) ^{2}\left(  k+1\right) ^{2}%
q}{\left(  q{k}^{2}+1\right)  \left(  q{k}^{2}-1\right) ^{2}}}x_{3}x_{1}
\newline\hspace*{2.2cm} + x_{2}^{2}+{\frac{q\left(  k-1\right)  \left(
k+1\right)  }{q{k}^{2}-1}}x_{3}x_{2}$

$E_{(0,2,0)}^{(2)}(x) =\frac{\left(  k-1\right)  \left(  k+1\right)  }{\left(
q^{2}{k}^{2}-1\right)  \left(  q^{2}{k}^{2}+1\right)  }x_{1}^{2}+x_{2}^{2}$

$E_{(0,2,0)}^{(3)}(x) =\frac{\left(  k-1\right)  \left(  k+1\right)  g_{1}%
^{2}}{{k}^{2}g_{1}^{3}+{q}^{6}}x_{1}^{2}+x_{2}^{2}$

$E_{(0,2,0)}^{(4)}(x) =\frac{\left(  k-1\right)  \left(  k+1\right)
}{k\left(  q^{8}k + \epsilon\right)  }x_{1}^{2}+x_{2}^{2}$

$E_{(0,2,0)}^{(5)}(x) =\frac{\left(  k-1\right)  \left(  k+1\right)  g_{2}%
}{{k}^{4}g_{2}^{3}{q}^{10}+1}x_{1}^{2}+x_{2}^{2}$

\bigskip

$E_{(0,0,2)}^{(1)}(x) = \frac{\left(  k-1\right)  \left(  k+1\right)
}{\left(  kq-1\right)  \left(  kq+1\right)  }x_{1}^{2}+{\frac{\left(
q+1\right)  \left(  k-1\right) ^{2}\left(  k+1\right) ^{2}}{\left(
kq-1\right)  \left(  kq+1\right)  \left(  q{k}^{2}-1\right)  }}x_{1}%
x_{2}+{\frac{\left(  q+1\right)  \left(  k-1\right)  \left(  k+1\right)
}{\left(  kq-1\right)  \left(  kq+1\right)  }}x_{3}x_{1}+\frac{\left(
k-1\right)  \left(  k+1\right)  }{\left(  kq-1\right)  \left(  kq+1\right)
}x_{2}^{2} \newline\hspace*{2.2cm} +{\frac{\left(  q+1\right)  \left(
k-1\right)  \left(  k+1\right)  }{\left(  kq-1\right)  \left(  kq+1\right)  }%
}x_{3}x_{2}+x_{3}^{2}$

$E_{(0,0,2)}^{(2)}(x) =\frac{\left(  k-1\right)  \left(  k+1\right)  }{\left(
kq^{2}-1\right)  \left(  kq^{2}+1\right)  }x_{1}^{2}+\frac{\left(  k-1\right)
\left(  k+1\right)  }{\left(  kq^{2}-1\right)  \left(  kq^{2}+1\right)  }%
x_{2}^{2}+x_{3}^{2}$

$E_{(0,0,2)}^{(3)}(x) =-\frac{\left(  k-1\right)  \left(  k+1\right)  g_{1}%
}{{k}^{4}g_{1} ^{3}+{q}^{6}}x_{1}^{2}+\frac{\left(  k-1\right)  \left(
k+1\right)  {k}^{2}g_{1}^{2}}{{k}^{4}g_{1}^{3}+{q}^{6}}x_{2}^{2}+x_{3}^{2}$

$E_{(0,0,2)}^{(4)}(x) =-\frac{\left(  k-1\right)  \left(  k+1\right)
}{k\left(  \epsilon q^{8} + k\right)  }x_{1}^{2}+\frac{\left(  k-1\right)
\left(  k+1\right)  }{q^{8} + \epsilon k}x_{2}^{2}+x_{3}^{2}$

$E_{(0,0,2)}^{(5)}(x) =-\frac{\left(  k-1\right)  \left(  k+1\right)
g_{2}^{2}}{{k}^{2}g_{2}^{3}{q}^{10}+1}x_{1}^{2}+\frac{\left(  k-1\right)
\left(  k+1\right)  g_{2}}{{k}^{2}g_{2}^{3}{q}^{10}+1}x_{2}^{2}+x_{3}^{2}$

\begin{rema}
We mention a few general properties of the metaplectic polynomials which can
be observed in the table above.

\begin{enumerate}
\item The following are monic nonsymmetric Macdonald polynomials:
$E^{(1)}_{\lambda}(x)$ (for any $\lambda$), $E^{(2)}_{(2,0,0)}(x)$,
$E^{(2)}_{(0,2,0)}(x)$, and $E^{(2)}_{(0,0,2)}(x)$ (see Remark
\ref{classical_case}). In particular, the formulas given above for these
polynomials match the ones provided in the appendix of \cite{HHL} (with
$k^{2}$ replaced by $t$).



\item More generally, for any $a \in\mathbb{Z}_{\geq1}$, the metaplectic
polynomial $E^{(am)}_{a \lambda}(x)$ may be obtained from $E^{(m)}_{\lambda
}(x)$ via the substitutions $x_{i} \to x_{i}^{a}$, $q \to q^{a^{2}} $ and
$g_{j}^{(m)} \to g_{aj}^{(am)}$. This follows directly from Proposition
\ref{E_transformation1} with $\kappa^{\prime}= a$. We list the pairs
$\big(E^{(a)}_{\lambda}(x), E^{(am)}_{m \lambda}(x)\big)$ from the table with
$a \neq1$ for which this applies:

\begin{enumerate}
\item $\big(E^{(2)}_{(0,0,0)}(x), E^{(4)}_{(0,0,0)}(x)\big)$

\item $\big(E^{(2)}_{(1,0,0)}(x), E^{(4)}_{(2,0,0)}(x)\big)$

\item $\big(E^{(2)}_{(0,1,0)}(x), E^{(4)}_{(0,2,0)}(x)\big)$

\item $\big(E^{(2)}_{(0,0,1)}(x), E^{(4)}_{(0,0,2)}(x)\big)$.
\end{enumerate}


\item For $GL_{3}$, we have
\[
C_{+}^{m} = C^{m} \cap P_{+} = \{ \lambda\in\mathbb{Z}^{3}: \lambda_{1}
\geq\lambda_{2} \geq\lambda_{3}, \lambda_{1} - \lambda_{3} \leq m \}
\]
(see \eqref{C} for the definition of $C^{m}$). If $\lambda\in C_{+}^{m}$, the
metaplectic polynomial $E^{(m)}_{\lambda}(x)$ is equal to the monomial
$x^{\lambda}$. This will be proved in the followup paper in the context of
arbitrary root systems. Note that this result applies to the following
polynomials listed above: $E^{(m)}_{(0,0,0)}(x)$, $E^{(m)}_{(1,0,0)}(x)$,
$E^{(m)}_{(1,1,0)}(x)$ (any $m \in\mathbb{Z}_{\geq1}$) and $E^{(m)}%
_{(2,0,0)}(x)$ (any $m \in\mathbb{Z}_{\geq2}$). Note that for $\lambda\in m
\mathbb{Z}^{3} \cap C_{+}^{m}$, this recovers the well-known fact that the
nonsymmetric Macdonald polynomial corresponding to the miniscule weight
$\lambda$ is a monomial.
\end{enumerate}
\end{rema}


\end{document}